\documentclass[11pt]{article}
\usepackage{graphicx}
\usepackage{amsmath,amsfonts,amsthm}
\usepackage{amssymb,latexsym}
\usepackage{fixmath}
\usepackage{mathrsfs,amsbsy}
\usepackage{dsfont}
\usepackage{enumerate}

\DeclareMathOperator{\F}{F}
\DeclareMathOperator{\HH}{H}
\DeclareMathOperator{\T}{T}

\def\ba{\pmb{a}}
\def\bb{\pmb{b}}

\def\be{\pmb{e}}

\def\bm{\pmb{m}}
\def\bn{\pmb{n}}

\def\bp{\pmb{p}}

\def\bt{\pmb{t}}
\def\bu{\pmb{u}}
\def\bv{\pmb{v}}
\def\bw{\pmb{w}}

\def\bz{\pmb{z}}

\newtheorem{proposition}{Proposition}[section]
\newtheorem{theorem}{Theorem}[section]
\newtheorem{lemma}{Lemma}[section]
\newtheorem{corollary}{Corollary}[section]

\theoremstyle{definition}
\newtheorem{definition}{Definition}[section]
\newtheorem{remark}{Remark}[section]
\newtheorem{example}{Example}[section]

\numberwithin{equation}{section}
\numberwithin{figure}{section}
\numberwithin{table}{section}

\allowdisplaybreaks

\usepackage{kbordermatrix}

\usepackage{caption}
\usepackage{amsfonts,amsthm}
\usepackage{amsmath,amssymb,enumerate,color}
\usepackage{algorithm,algorithmic}
\usepackage{epsfig,graphicx,psfrag,mathrsfs,subfigure}
\usepackage{eucal}
\usepackage{yhmath}
\usepackage{hyperref}
\usepackage{diagbox}
\RequirePackage{multirow}
\usepackage{textcomp}
\usepackage[T1]{fontenc}
\usepackage{extarrows}

\usepackage{rotating}
\usepackage{lscape}
\usepackage{bm}
\usepackage{xurl}
\usepackage{xspace}
\usepackage{tikz}
\usetikzlibrary{petri} 
\usetikzlibrary{shadows,arrows,positioning,shapes.geometric} 

\usepackage{geometry}
\def\ba{\pmb{a}}
\def\bb{\pmb{b}}

\def\be{\pmb{e}}

\def\bn{\pmb{n}}

\def\bp{\pmb{p}}

\def\bt{\pmb{t}}
\def\bu{\pmb{u}}
\def\bv{\pmb{v}}
\def\bw{\pmb{w}}

\def\bz{\pmb{z}}
\def\bzs{\mathbf{0}}

\def\scrR{\mathscr{R}}

\def\bbC{\mathbb{C}}

\def\bbP{\mathbb{P}}

\renewcommand{\algorithmicrequire}{\textbf{Input:}}
\renewcommand{\algorithmicensure}{\textbf{Output:}}

\numberwithin{equation}{section}
\numberwithin{figure}{section}
\numberwithin{table}{section}

\graphicspath{{figures/}{figure/}{pictures/}%
	{picture/}{pic/}{pics/}{image/}{images/}}
	
\title{{\sf NEP\_MiniMax}: An approach for NEPs based on minimax approximation of the split  {coefficient} vector}

 \author{Chenkun Zhang\thanks{School of
  Mathematical Sciences, Soochow University, Suzhou 215006, Jiangsu, China. Email: {\tt 19816896524@163.com.}}  \and Jiawei Gu\thanks{School of Mathematical Sciences, Soochow University, Suzhou 215006, Jiangsu, China. Email: {\tt 20244207049@stu.suda.edu.cn}.}\and Lei-Hong Zhang\thanks{Corresponding author. School of Mathematical Sciences, Soochow University, Suzhou 215006, Jiangsu, China. This work was
 supported in part by the National Natural Science Foundation of China (NSFC-12471356, NSFC-12371380).
        Email: {\tt longzlh@suda.edu.cn}.} }
 \date{\today}

\begin{document}
\maketitle
\begin{abstract}
We propose {\sf NEP\_MiniMax}, a novel computational method for solving nonlinear eigenvalue problems (NEPs) $T(\lambda)\bu = \bzs$ on a   Jordan domain $\Omega \subset \mathbb{C}$.  {For an NEP in split form $T(x)=\sum_{i=1}^s t_i(x)E_i$, the method applies the {\sf m-d-Lawson} algorithm to the split coefficient vector-valued function $\bt(x)=[t_1(x),\ldots,t_s(x)]^{\rm T}$ on $\Omega$, rather than directly minimizing an error for the matrix-valued function $T$. The resulting rational minimax approximant ${\boldsymbol{\xi}}^*(x)=[r_1^*(x),\ldots,r_s^*(x)]^{\rm T}$ induces the rational matrix surrogate $R^*(x)=\sum_{i=1}^s r_i^*(x)E_i=P^*(x)/q^*(x)\approx T(x)$.} The method combines  {this coefficient-vector approximation} with a structure-exploiting linearization technique.  {A matrix error bound transfers the uniform coefficient-vector approximation accuracy to $R^*$ and yields computable eigenpair residual estimates.} Eigenpairs are then computed by solving a polynomial eigenvalue problem $P^*(\lambda) \bu = \bzs$ via a strong linearization that exactly preserves eigenvalue multiplicities  {of $P^*$}. Numerical experiments on some problems from the NLEVP collection demonstrate competitiveness with state-of-the-art methods (e.g., Beyn, NLEIGS, SV-AAA) in efficiency and accuracy, with theoretical error bounds directly relating eigenpair  {residuals} to the  {split coefficient-vector approximation} quality.
\end{abstract}

\medskip
{\small
{\bf Key words. Nonlinear eigenvalue problem, minimax approximation, rational Krylov method, matrix pencil linearization, Lawson's algorithm, {\sf m-d-Lawson} iteration.}    
\medskip

{\bf AMS subject classifications. 65F15, 65H17, 41A20,  65D15}
} 
 

\section{Introduction}\label{sec:introduction}
Let $\Omega \subset \mathbb{C}$  be a   Jordan domain whose boundary $\Gamma:=\partial \Omega$ is a simple Jordan curve. Let $\bar{\Omega}:={\Omega}\cup \Gamma$. Given a continuous matrix-valued function $T:  \bar{\Omega} \rightarrow \mathbb{C}^{n \times n}$ that is analytic in $\Omega$ (i.e., each matrix entry $T_{ij}(x)$ is continuous on $\bar\Omega$ and analytic in $\Omega$), we consider the nonlinear eigenvalue problem (NEP) of finding $(\lambda, \pmb{u}) \in \Omega \times (\mathbb{C}^n \setminus \{\bzs\})$ such that
\begin{equation} \label{eq:nep}
T(\lambda) \pmb{u} = \bzs.
\end{equation}
Here, $\lambda$ is an eigenvalue of $T$ and $\pmb{u}$ is the corresponding right eigenvector;  $(\lambda, \pmb{u})$ is a right eigenpair of $T(x)$. The spectrum of $T$ in $\Omega$, denoted $\Lambda(T)$, is the set of all eigenvalues lying in $\Omega$.
A left eigenpair $(\lambda, \pmb{\tilde u})$ is the one satisfying $\pmb{\tilde u}^{\HH} T(\lambda) = \bzs^{\HH}$ for $\pmb{\tilde u}\in \mathbb{C}^n \setminus \{\bzs\}$. Since left eigenpairs of $T(x)$ can be derived from right eigenpairs of  its conjugate transpose  $T(x)^{\HH}$, we focus   on computing right eigenpairs. Adopting the general setting for NEPs presented in \cite{guti:2017}, we assume $T(x) $ admits a split form
\begin{equation} \label{eq:split_form_T}
T(x) = \sum_{i=1}^s t_i(x) E_i,
\end{equation}
where $t_i: \bar\Omega \rightarrow \mathbb{C}$ is  analytic in $\Omega$ and continuous up to the boundary    $\Gamma$, and $E_i \in \mathbb{C}^{n \times n}$ is a  constant matrix for $i = 1, \dots, s$.

The NEP in the form of \eqref{eq:split_form_T} arises in numerous computational science and engineering disciplines, such as acoustics, control theory, fluid mechanics, and structural mechanics. For a thorough review of applications and state-of-the-art numerical methods as of 2007, we refer readers to \cite{guti:2017}. Furthermore, in 2013,  \cite{behm:2013} provides a MATLAB-based benchmark collection NLEVP that currently contains approximately $80$ test problems from various application domains. This comprehensive library encompasses polynomial eigenvalue problems, rational eigenvalue problems, as well as more general nonlinear eigenvalue problems.
Owing to its unified interface for diverse NEPs and customizable problem generation parameters, NLEVP has emerged as an essential resource for algorithm development and numerical validation in the field of NEP.

Numerical methods for  NEPs can be broadly categorized into four classes: linearization-based methods, iterative projection methods, contour integral-based methods, and Newton-type methods. Linearization-based methods, e.g.,  \cite{limp:2022,saem:2020,bemm:2013,bemm:2015}, begins with the fact that polynomial \cite{mamm:2006,mamm:2006b} and rational NEPs \cite{subz:2011} can often be transformed into generalized eigenvalue problems (GEPs). For general NEPs of the form \eqref{eq:split_form_T}, rational or polynomial approximation techniques (including interpolation and numerical approximation) applied to the nonlinear terms $t_i(x)$ yield tractable polynomial/rational eigenvalue problems. Iterative projection methods, including nonlinear Arnoldi and Jacobi-Davidson approaches (e.g., \cite{tasa:2024,voss:2007}), extend classical linear eigensolvers to nonlinear eigenvalue problems by projecting the NEP onto carefully constructed subspaces, and are   effective for large-scale problems due to their use of adaptive restarting and repeated linear solves.  Contour integral-based methods such as those in \cite{asst:2009,beyn:2012,breg:2023,poke:2015,gamp:2018,lirs:2025,lirs:2026,sasu:2003}, exemplified by Beyn's method and the FEAST framework, employ complex analysis to extract eigenvalues within specified regions via contour integration, avoiding full linearization while targeting interior eigenvalues efficiently. Newton-type methods \cite{gabl:2016,kres:2009,kubl:1969,lanc:1961,lanc:2002,neum:1985,schr:2008} provide locally superlinear convergence through iterative refinement of eigenvalue-eigenvector pairs, particularly effective for smooth nonlinearities.

{In this paper, we develop a linearization-based approach for eigenvalues in $\Omega$ whose approximation stage is centered on the split coefficient vector.} Our proposed method employs  {recent} developments on rational minimax approximations for vector-valued (or matrix-valued) functions \cite{zhzz:2025,zhzh:2026b} and the related {\sf m-d-Lawson} iteration. According to the  {split} form \eqref{eq:split_form_T}, we decouple the treatment of analytic functions $t_i(x)$ from constant matrices $E_i$ by computing an ideal rational  {approximant for the split coefficient vector}, \({\boldsymbol{\xi}}^{\rm best}(x)\), through 
\begin{equation}\label{eq:bestf0}
    {\boldsymbol{\xi}}^{\rm best}(x):=[r^{\rm best}_1(x),\dots,r^{\rm best}_s(x)]^{\rm T} :=\arg\inf_{{\boldsymbol{\xi}} \in \mathscr{R}_s}\sup_{x \in\bar \Omega}\left\|[t_1(x),\dots,t_s(x)]^{\rm T}-{\boldsymbol{\xi}}(x)\right\|_2,
\end{equation}
and then construct a rational NEP problem  with  \(R^{\rm best}(x)=\sum\limits_{i=1}^{s}r^{\rm best}_i(x)E_i\approx T(x)\), 
where 
\begin{equation}\label{eq:rats}
\scrR_{s}:=\left\{{\boldsymbol{\xi}}(x)=\left[\begin{array}{ccc}r_{1}(x)     \\\ \vdots  \\ r_{s}(x)  \end{array}\right]  \Big| r_{i}(x)=\frac{p_{i}(x)}{q(x)}, p_{i}\in \bbP_{n_{i}},~0\not\equiv q\in\bbP_{d}\right\},
\end{equation} 
$n_{i}\ge 0$ and $d\ge 0$ are prescribed integers. The $i$th function $r_i(x)=\frac{p_{i}(x)}{q(x)}$ of ${\boldsymbol{\xi}}(x)$ is said to be a rational function of type $(n_{i},d)$, where $p_{i}\in \bbP_{n_{i}}$ is the $i$th entry of a vector-valued polynomial $\bp:\bbC\rightarrow \bbC^{s}$.   

It is worth mentioning that the formulation of \eqref{eq:bestf0}  exhibits two notable features for solving NEPs. First, since all component rational approximations $r_i(x)$ of $\bm{\xi}^{\text{best}}(x)$ (when it exists)  share a common denominator $q(x)$, the resulting rational eigenvalue problem with $R^{\rm best}(x)$ can be converted to a polynomial eigenvalue problem. Second, this shared denominator structure implies that the solution $\bm{\xi}^{\text{best}}(x)$ of \eqref{eq:bestf0} behaves fundamentally differently from componentwise rational minimax approximations obtained by independently minimizing the supremum error of each component $t_{i}(x)$ ($1 \le i \le s$) on $\bar\Omega$. This distinction becomes particularly clear in the polynomial case (when $d = 0$ and $q \equiv 1$), where the joint approximation approach in \eqref{eq:bestf0} is   superior to independent componentwise approximation because
\begin{align*}
&\inf_{{\boldsymbol{\xi}} \in \mathscr{R}_s}\sup_{x \in\bar \Omega}\left\|[t_1(x),\dots,t_s(x)]^{\rm T}-{\boldsymbol{\xi}}(x)\right\|_2^2= \inf_{{p_{i}\in \bbP_{n_i}}} \sup_{x\in \bar \Omega} \sum_{i=1}^s   |t_{i}(x) - p_{i}(x)|^2 \\
\leq&\inf_{{p_{i}\in \bbP_{n_i}}}  \sum_{i=1}^s\sup_{x\in \bar \Omega}   |t_{i}(x) - p_{i}(x)|^2 =   \sum_{i=1}^s\left(\inf_{{p_{i}\in \bbP_{n_i}}}\sup_{x\in \bar \Omega}   |t_{i}(x) - p_{i}(x)|^2\right),
\end{align*}
demonstrating that the unified formulation in \eqref{eq:bestf0}   improves upon decoupled componentwise minimax error over $\bar\Omega$.

For the existence of $ {\boldsymbol{\xi}}^{\rm best}(x)$ in \eqref{eq:bestf0}, the fundamental existence of the rational minimax approximant in the classical scalar-valued case on a set dense in itself, established by Walsh \cite{wals:1931,wals:1969}, has recently been extended to  {vector-valued (and, more generally, matrix-valued) rational minimax approximation  \cite{zhzh:2026b};   here it is applied to the split coefficient-vector problem} \eqref{eq:bestf0} in \cite[Theorem 2.1]{zhzh:2026b}. In particular, the optimal approximant ${\boldsymbol{\xi}}^{\rm best}(x)$ is therefore guaranteed to exist for any $\scrR_{s}$ with given $d\ge 0$ and $n_{i}\ge 0~(1\le i\le s)$.

Computing the exact ${\boldsymbol{\xi}}^{\rm best}(x)$  is not an easy numerical task. Fortunately, since each entry $t_{i}(x)$ is analytic in its interior and continuous  up to the boundary $\partial\Omega$, the maximum Frobenius norm principle holds; that is, 
$$\sup_{x \in\bar \Omega}\left\|\bt(x)-{\boldsymbol{\xi}}(x)\right\|_2$$
is attained on   $\partial \Omega$ for any ${\boldsymbol{\xi}}(x)\in \scrR_{s}$ with poles outside $\bar\Omega$ (see Theorem 1 in \cite{cond:2020}), 
where 
\begin{equation}\label{eq:def_Fx}
    \bt(x)=\begin{bmatrix}
        t_1(x)\\ \vdots \\ t_s(x)
    \end{bmatrix}: \bar\Omega \rightarrow \mathbb{C}^{s}.
\end{equation}
Consequently, solving   \eqref{eq:bestf0} with carefully chosen boundary nodes $\mathcal{X}=\{x_j\}_{j=1}^m \subset \partial \Omega$ with $m\ge \max_{1\le i\le s}(n_i+d+2)$ turns out to be an efficient numerical approach for computing minimax rational approximants on $\bar\Omega$. This is realized by the {\sf m-d-Lawson} iteration \cite{zhzz:2025}. Specifically, we compute an approximant  \({\boldsymbol{\xi}}^* \approx {\boldsymbol{\xi}}^{\rm best}\) from the discrete rational minimax approximation 
\begin{equation}\label{eq:bestfX}
     {\boldsymbol{\xi}}^*(x):=[r^*_1(x),\dots,r^*_s(x)]^{\rm T} :=\arg\min_{{\boldsymbol{\xi}} \in \mathscr{R}_s}\sup_{x \in {\cal X}}\left\|\bt(x)-{\boldsymbol{\xi}}(x)\right\|_2,
\end{equation}
and form 
\begin{equation}\label{eq:RT_def}
R^*(x)=\sum\limits_{i=1}^{s}r^*_i(x)E_i\approx R^{\rm best}(x),
\end{equation} 
where $r_i^*(x)=p^*_{i}(x)/q^*(x)$.  {Thus the minimax property applies to the split coefficient vector, while \(R^*(x)\) is the induced rational matrix surrogate whose eigenpairs are used to approximate those of \(T(x)\).}

In theory, it is pointed out (see \cite[Section 4.2]{zhzh:2026b}) that one may not necessarily recover exactly the ideal ${\boldsymbol{\xi}}^{\rm best}$ from ${\boldsymbol{\xi}}^*$, and a sufficient condition on the {\it extreme points}, i.e., points $z\in\bar\Omega$ satisfying $$\|{\boldsymbol{\xi}}^{\rm best}(z)-\bt(z)\|_{2}=\sup_{x\in \bar\Omega} \|{\boldsymbol{\xi}}^{\rm best}(x)-\bt(x)\|_{2},$$ associated with ${\boldsymbol{\xi}}^{\rm best}$ is provided in \cite[Theorem 4.2]{zhzh:2026b}, ensuring that a properly chosen boundary point set ${\cal X}$ in \eqref{eq:bestfX} can recover ${\boldsymbol{\xi}}^{\rm best}$. 
Our numerical experiments  also  show that,  in most cases of the sampling ${\cal X}$,  the  {approximate} rational minimax approximant \({\boldsymbol{\xi}}^*\), computed via {\sf m-d-Lawson},   remains pole-free in the region of interest. 
Setting \(R^*=R^*(x) = \frac{P^*(x)}{q^*(x)} := \frac{1}{q^*(x)}\sum_{i=1}^s p_i^*(x) E_i\), we consequently have \(\Omega \cap \Lambda(R^*) = \Omega \cap \Lambda(P^*)\). This equivalence allows us to compute the eigenvalues of the rational matrix function \(R^*(x)\) by solving the eigenvalue problem for the matrix polynomial \(P^*(x)\) restricted to $\Omega$. For small to moderate dimension $n$, we employ linearization followed by the QZ algorithm, while for large-scale problems, we utilize filtered subspace iterations \cite{chli:2025} or the   compact rational Krylov   (CORK)   framework \cite{bemm:2015}. 
 {The complete framework of our coefficient-vector minimax based NEP solution approach is illustrated in Figure~\ref{fig:framework_nep_solve}.}

\begin{figure}[htbp]
    \centering
    \includegraphics[width=0.92\textwidth]{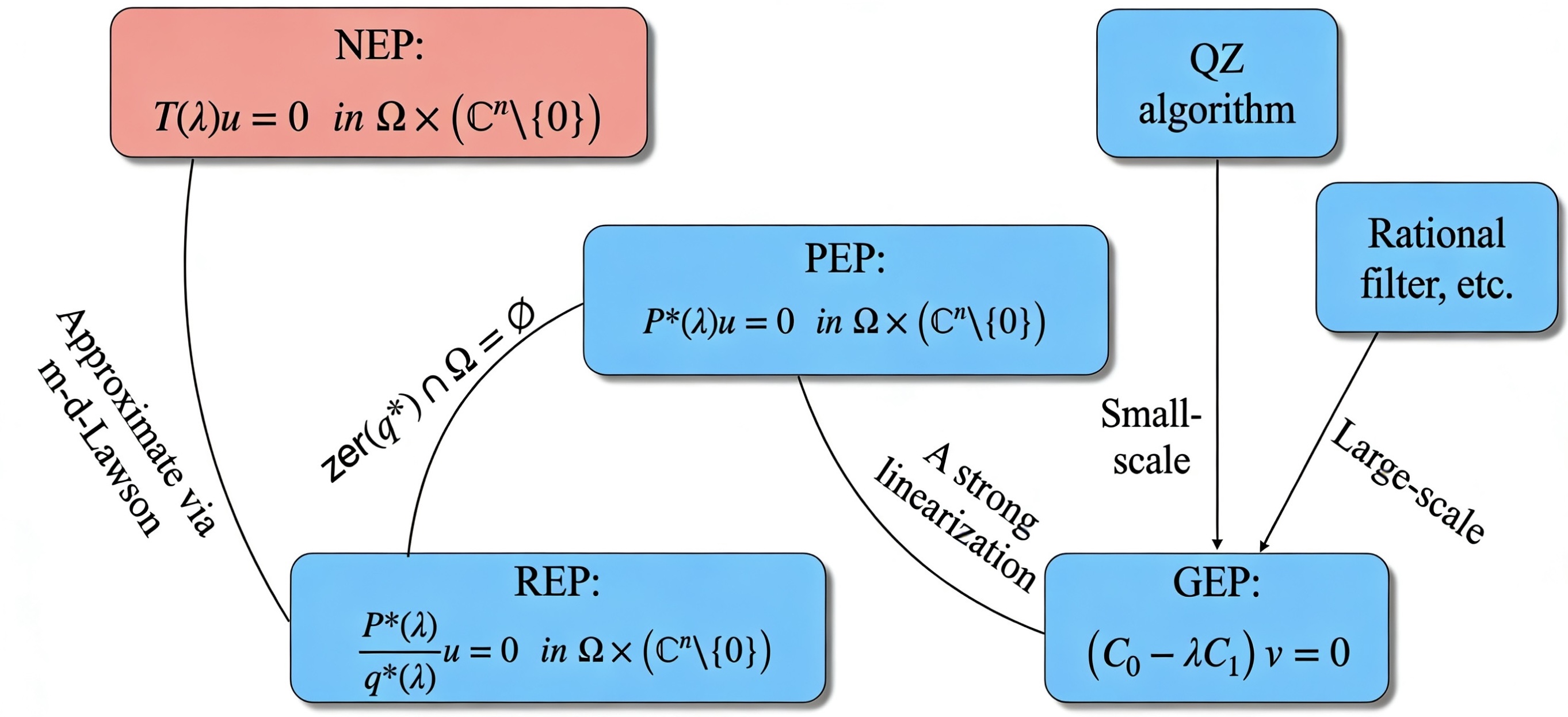}
    \caption{\footnotesize  {Framework of {\sf NEP\_MiniMax} for solving NEPs based on minimax approximation of the split coefficient vector via {\sf m-d-Lawson} and subsequent linearization.}}
    \label{fig:framework_nep_solve}
\end{figure}

{\it Paper structure}. Building upon the framework illustrated in Figure~\ref{fig:framework_nep_solve}, this paper is organized as follows: Section~\ref{sec:mdlawson} introduces the rational minimax approximation problem~\eqref{eq:bestfX}  {for the split coefficient vector}, presenting its dual formulation and the dual-based {\sf m-d-Lawson} iteration~\cite{zhzz:2025}; Section~\ref{sec:erroranalysis} establishes error bounds for $\sup_{x\in\bar\Omega}\|T(x)-R^*(x)\|_{2}$  {induced by the coefficient-vector approximation}, providing theoretical justification for our {\sf NEP\_MiniMax} method; Section~\ref{sec:linearization} discusses linearization techniques for the rational eigenvalue problem $R^*(\lambda)\bu=\bzs$; Section~\ref{sec:largescale} addresses large-scale computation through rational filtering, shift-invert strategies, and the CORK framework~\cite{bemm:2015}; Section~\ref{sec:numerical} validates our approach using NLEVP benchmark problems, demonstrating competitive performance against  {Beyn's} method \cite{beyn:2012}, NLEIGS and SV-AAA \cite{limp:2022} in efficiency and accuracy;   concluding remarks are given in Section~\ref{sec:conclusion}.

{\it Notation}. Throughout this paper, ${\rm i}=\sqrt{-1}$ denotes the imaginary unit and $\mathbb{P}_n$ contains complex polynomials of degree $\leq n$. 
 Vectors appear in bold lowercase, while $\mathbb{C}^{n \times m}$ (resp.\ $\mathbb{R}^{n \times m}$) denotes all $n \times m$ complex (resp.\ real) matrices, with $I_{n} \equiv [\bm{e}_1,\dots,\bm{e}_n] \in \mathbb{R}^{n \times n}$ being the identity matrix ($\bm{e}_i$ is its $i$th column for $1 \le i \le n$). The operator ${\rm diag}(\bm{x})={\rm diag}(x_1,\dots,x_m)$ creates diagonal matrices from $\bm{x} \in \mathbb{C}^m$, and $\bm{x}./\bm{y}=[x_1/y_1,\dots,x_m/y_m]^{\T}$ defines element-wise division ($\bm{x},\bm{y} \in \mathbb{C}^m$, $y_j \ne 0$). For $A \in \mathbb{C}^{m \times n}$, $A^{\rm H}$ (resp.\ $A^{\rm T}$) is conjugate (resp.\ regular) transpose,  ${\rm span}(A)$ the column space, $\|\cdot\|_{\rm F}$ the Frobenius norm, and ${\rm tr}(\cdot)$ the trace. 
Following MATLAB notation, $A(\mathcal{J}_1, \mathcal{J}_2)$ denotes the submatrix of $A$ formed by rows indexed by $\mathcal{J}_1 \subseteq [m]=:\{1,\dots,m\}$ and columns indexed by $\mathcal{J}_2 \subseteq  [n]$. Finally, ${\rm zer}(f)$ denotes zeros of the function $f$.
  
\section{Computing a rational minimax approximant $\xi^*$}\label{sec:mdlawson}
We begin with a concise overview of the {\sf m-d-Lawson} algorithm \cite{zhzz:2025} and its application  {to the split coefficient vector $\bt(x)=[t_1(x),\ldots,t_s(x)]^{\rm T}$ associated with $T(x)$ in \eqref{eq:split_form_T}}. The implementation details follow essentially from the theoretical framework developed in \cite{zhzz:2025}.
 
Given $ \mathscr{R}_{s}$ in \eqref{eq:rats} with prescribed non-negative integers \(n_{i}\) and \(d\), the minimax problem \eqref{eq:bestf0} is an extension of the classical problem in approximation theory \cite{tref:2019a,wals:1931,wals:1969} for  {complex scalar-valued} functions. As mentioned in Section \ref{sec:introduction}, by the analyticity of each  {split coefficient function} $t_i(x)$ in $\Omega$ and the continuity on $\partial\Omega$, the maximum norm principle  \cite[Theorem 1]{cond:2020} applies:
$$\sup_{x \in \partial\Omega} \left\| \bt(x)- {\boldsymbol{\xi}}(x) \right\|_2 = \sup_{x \in \bar\Omega} \left\|\bt(x) - {\boldsymbol{\xi}}(x) \right\|_2,$$
where ${\boldsymbol{\xi}}(x) \in \scrR_s$ is any rational approximation with poles exterior to $\Omega$. Thus, we resort to numerical methods for its discretized problem by sampling nodes ${\cal X}=\{x_j\}_{j=1}^m$ (\(m \ge \max_{1 \le i \le s}\{n_{i}+d+2\}\)) in\footnote{It is known (see e.g., \cite{this:1993, zhan:2026}) that the rational minimax approximant  in \eqref{eq:rats} of $\bt(x)$ on the Jordan contour $\Gamma=\partial \Omega$ does not necessarily coincide with $\boldsymbol{\xi}^{\rm best}$ on the Jordan domain  $\Omega$; thus, including interior samples from $\Omega$ in ${\cal X}$  may be helpful for $\boldsymbol{\xi}^{*}$ to approximate $\boldsymbol{\xi}^{\rm best}$ numerically.} $\bar\Omega$, and consider the discrete rational minimax approximation \eqref{eq:bestfX} for the vector-valued function $\bt(x)$.

To be precise, denote by \(\{(x_{\ell},\bt(x_{\ell}))\}_{\ell=1}^m\) the sampled data 
and consider  
\begin{equation}\label{eq:minimax}
    \eta_{\infty}:=\inf_{{\boldsymbol{\xi}} \in \mathscr{R}_{s}}\|\bt(x)-{\boldsymbol{\xi}}(x)\|^2_{\infty, {\cal X}},
\end{equation}
where 
\begin{equation}\nonumber
    \|\bt(x)-{\boldsymbol{\xi}}(x)\|_{\infty, {\cal X}}:=\sup_{1 \le \ell \le m}\|\bt(x_{\ell})-{\boldsymbol{\xi}}(x_{\ell})\|_{2}.
\end{equation}
Following the traditional treatments of rational approximations for vector-valued or matrix-valued functions  \cite{begu:2015, begu:2017,gogu:2021, gust:2009, limp:2022,zhzy:2025}, we enforce a common denominator $q \in \bbP_d$ for all entries $r_{i}(x)$ of ${\boldsymbol{\xi}}$ in \eqref{eq:bestfX}.  In case when the infimum of \eqref{eq:minimax} is attainable, we call the function \({\boldsymbol{\xi}}^*=\frac{1}{q^*}\bp^*\in \mathscr{R}_{s}\) from 
\begin{equation}\nonumber
    {\boldsymbol{\xi}}^* \in \arg\min_{{\boldsymbol{\xi}} \in \mathscr{R}_{s}}\|\bt(x)-{\boldsymbol{\xi}}(x)\|_{\infty, {\cal X}}^2
\end{equation} 
the rational minimax approximant \cite{tref:2019a} of \(\bt(x)\) over \({\cal X}\). We refer to \cite{zhzh:2026b} for the optimality conditions,  relations with the dual-based methods as well as a sufficient condition ensuring that a properly chosen boundary point set ${\cal X}$ can recover ${\boldsymbol{\xi}}^{\rm best}(x)$ of \eqref{eq:bestf0}. 
\begin{remark}\label{rmk:entry_approximate}
We approximate the vector-valued function $\bt(x)$ constructed from the components $t_i(x)$ of the split form of $T(x)$, rather than $T(x)$ itself as those in the contour  {integral-based} methods \cite{asst:2009,beyn:2012,breg:2023,poke:2015,gamp:2018,lirs:2025,sasu:2003}, primarily for computational efficiency. In practice, the dimension parameter $n$ in \eqref{eq:split_form_T} typically dominates $s$, and the matrices $E_i$ are often large-scale and sparse. This approach yields significant reductions in both computational complexity and storage demands compared to direct min-max approximation of $T(x)$. A detailed error analysis comparing the rational approximation $R^*(x)= \sum_{i=1}^{s} r_i^*(x) E_i$ with $T(x)$ is provided in Section \ref{sec:erroranalysis}.
\end{remark}

\subsection{A dual problem of \eqref{eq:bestfX}}\label{subsec:dual}
Following  \cite{zhyy:2025,zhzz:2025}, to represent a rational function \(r_{i}(x)=\frac{p_{i}(x)}{q(x)}\), we choose  {basis functions} \(\{\psi_0(x),\dots,\psi_{n_{i}}(x)\}\) and \(\{\phi_0(x),\dots,\phi_{d}(x)\}\) to  {parameterize}
\begin{equation}\label{eq:paris}
    r_{i}(x)=\frac{p_{i}(x)}{q(x)}=\frac{[\psi_0(x),\dots,\psi_{n_{i}}(x)]\pmb{a}_{i}}{[\phi_0(x),\dots,\phi_d(x)]\pmb{b}},\ {\rm for\ some}\ \pmb{a}_{i} \in \mathbb{C}^{n_{i}+1},\ \pmb{b} \in \mathbb{C}^{d+1}.
\end{equation}
The monomial basis \(\psi_i(x)=\phi_i(x)=x^i\) can usually be used. For the given \({\cal X}=\{x_{\ell}\}_{\ell=1}^m\), we denote by
\begin{equation}\nonumber
    \pmb{\Psi}_{i}=\pmb{\Psi}_{i}(x_1,\dots,x_m;n_{i}):=\begin{bmatrix}
        \psi_0(x_1)&\psi_1(x_1)&\dots&\psi_{n_{i}}(x_1)\\ \psi_0(x_2)&\psi_1(x_2)&\dots&\psi_{n_{i}}(x_2)\\ \vdots&\vdots&\ddots&\vdots\\\psi_0(x_m)&\psi_1(x_m)&\dots&\psi_{n_{i}}(x_m)
    \end{bmatrix} \in \mathbb{C}^{m \times (n_{i}+1)},
\end{equation}
the associated generalized Vandermonde matrix; therefore,
\begin{equation}\nonumber
    \pmb{p}_{i}:=[p_{i}(x_1),\dots,p_{i}(x_m)]^{\T}=\pmb{\Psi}_{i}\pmb{a}_{i} \in \mathbb{C}^m.
\end{equation}
Similarly, we denote by \(\pmb{\Phi}=\pmb{\Phi}(x_1,\dots,x_m;d) \in \mathbb{C}^{m \times (d+1)}\) the associated Vandermonde matrix for the denominator \(q(x)\) at \({\cal X}\), whose \((i,j)\)th entry is \(\phi_{j-1}(x_i)\); thus
\begin{equation}\nonumber
    \pmb{q}:=[q(x_1),\dots,q(x_m)]^{\T}=\pmb{\Phi}\pmb{b} \in \mathbb{C}^m.
\end{equation}
Let \({\boldsymbol{\xi}}(x)=[r_1(x),\dots,r_s(x)]^{\T}  \in \mathscr{R}_{s}\) with each \(r_{i}(x)=p_{i}(x)/q(x)\) irreducible. If \(\|{\boldsymbol{\xi}}(x)\|_{2}\) is bounded for any \(x \in {\cal X}\), then it is easy to see that \(q(x) \ne 0\) for any \(x \in {\cal X}\). Associated with \({\boldsymbol{\xi}}(x)\) is the maximum error
\begin{equation}\label{eq:e_xi}
    e({\boldsymbol{\xi}}):=\max_{x_{\ell} \in {\cal X}}\|\bt(x_{\ell})-{\boldsymbol{\xi}}(x_{\ell})\|_{2}^2=\|\bt(x)-{\boldsymbol{\xi}}(x)\|_{\infty,{\cal X}}^2.
\end{equation}

In \cite{yazz:2023, zhha:2025,zhyy:2025,zhzz:2025,zhzh:2026}, by introducing a real variable \(\eta\), the original minimax problem \eqref{eq:minimax} is transformed into the following linearization
\begin{align}\nonumber
&\eta_{2}:=\inf_{\eta\in \mathbb{R},~p_{i}\in \mathbb{P}_{n_{i}},~0\not \equiv q\in \mathbb{P}_{d}}\eta \\\label{eq:linearity}
 s.t., ~&\sum_{i=1}^s\left|t_{i}(x_\ell)q(x_{\ell})-p_{i}(x_{\ell})\right|^2\le \eta |q(x_{\ell})|^2, \ \ \ \forall \ell \in \left[m\right].
\end{align}
Unlike the original min-max problem \eqref{eq:minimax}, \eqref{eq:linearity} is a single-level minimization \cite{zhzz:2025}.

The development of the dual-based method {\sf m-d-Lawson} \cite{zhzz:2025} for \eqref{eq:bestfX} was primarily motivated by the need to solve \eqref{eq:linearity}. To this end,  
a dual function associated with \eqref{eq:linearity} is defined by (see \cite{zhzz:2025})
\begin{align}\nonumber
    d(\pmb{w})&=\min_{\pmb{a}_{i} \in \mathbb{C}^{n_{i}+1},\pmb{b}\in\mathbb{C}^{d+1},\sum\limits_{\ell=1}^mw_{\ell}|q(x_{\ell};\pmb{b})|^2=1}\sum\limits_{\ell=1}^{m}w_{\ell}\left(\sum\limits_{i=1}^{s}\left|t_{i}(x_{\ell})q(x_{\ell};\pmb{b})-p_{i}(x_{\ell};\pmb{a}_{i})\right|^2\right) \\ \label{eq:dual_question} &=\min_{\pmb{a} \in \mathbb{C}^{N}, \pmb{b} \in \mathbb{C}^{d+1}, \|\sqrt{W}\pmb{\Phi}\pmb{b}\|_2=1}\left\|\sqrt{\pmb{W}_{\otimes}}[-\pmb{\Theta},\pmb{F}\pmb{\Phi}]\begin{bmatrix}
        \pmb{a}\\ \pmb{b}
    \end{bmatrix}\right\|_2^2,
\end{align}
where \(N=\sum\limits_{i=1}^{s}(n_{i}+1),\ F_{i}={\rm diag}(t_{i}(x_1),\dots,t_{i}(x_m)) \in \mathbb{C}^{m \times m}\), 
\begin{equation}\label{eq:Theta_def}
    \pmb{F}=\begin{bmatrix}
        F_{1}\\ \vdots\\F_{s}
    \end{bmatrix} \in \mathbb{C}^{ms \times m},\ \ \ \pmb{\Theta}=\begin{bmatrix}
        \pmb{\Psi}_{1}&&\\&\ddots&\\&&\pmb{\Psi}_{s}
    \end{bmatrix} \in \mathbb{C}^{ms \times N},
\end{equation}
\begin{equation}\label{eq:a_def}
    \pmb{a}=[\pmb{a}_{1}^{\rm T},\dots,\pmb{a}_{s}^{\rm T}]^{\rm T} \in \mathbb{C}^N,
\end{equation}
and 
\begin{equation}\label{eq:W_def}
    \pmb{W}_{\otimes}=I_{s} \otimes W \in \mathbb{C}^{ms \times ms},\ \ \ W={\rm diag}(\pmb{w}) \in \mathbb{C}^{m \times m}.
\end{equation}
With the probability simplex \({\cal S}:=\left\{\bm{w}=[w_1,\dots,w_m]^{\rm T} \in \mathbb{R}^m:\bm{w} \ge 0, \pmb{w}^{\rm T}\pmb{e}=1 \right\}\) where \(\pmb{e}=[1,\dots,1]^{\rm T} \in \mathbb{R}^m\), the following weak duality \cite{zhzz:2025} holds:
\begin{equation}\label{eq:weak_dual}
    \max_{\pmb{w}\in{\cal S}}d(\pmb{w})\le\eta_{\infty}.
\end{equation} 
Moreover, by relying on a generalization  \cite[Theorem 4.1]{zhzz:2025}  of Ruttan's sufficient condition \cite{rutt:1985}, we can also have the following theoretical guarantee for solving the minimax approximant \({\boldsymbol{\xi}}^*(x)\).
\begin{proposition}{\rm (\cite[Theorem 4.1]{zhzz:2025})}
    Given \(m \ge \max_{1 \le i \le s}\{n_i+d+2\}\) distinct nodes \({\cal X}=\{x_{\ell}\}_{\ell=1}^m\) in \(\bar\Omega\), we have    weak duality \eqref{eq:weak_dual}. Let \(\pmb{w}^*\) be the solution to the dual problem
    \begin{equation}\label{eq:dual_question_simple}
        \max_{\pmb{w} \in {\cal S}}d(\pmb{w}),
    \end{equation}
    where \(d(\pmb{w})\) is  given in \eqref{eq:dual_question}, and \((\pmb{a}^*,\pmb{b}^*)\) be the associated solution of \eqref{eq:dual_question} that achieves the minimum \(d(\pmb{w}^*)\), where \(\pmb{a}^*\) contains the coefficient vectors \(\pmb{a}_{i}^* \in \mathbb{C}^{n_{i}+1}\) partitioned as in \eqref{eq:a_def}. Denote
       $ {\boldsymbol{\xi}}^*(x)=[r_1^*(x),\dots,r_s^*(x)]^{\T}, ~r^*_{i}(x)=\frac{p^*_{i}(x)}{q^*(x)}$
    with \(p^*_{i}(x)=[\psi_0(x),\dots,\psi_{n_{i}}(x)]\pmb{a}^*_{i}\), \(q^*(x)=[\phi_0(x),\dots,\phi_d(x)]\pmb{b}^*\). Suppose \(q^*(x_{\ell}) \ne 0\) for all \(x_{\ell} \in {\cal X}\). Then if 
    \begin{equation}\label{eq:ruttan_generalization}
        d(\pmb{w}^*)=\max_{x_{\ell} \in {\cal X}}\|\bt(x_{\ell})-{\boldsymbol{\xi}}^*(x_{\ell})\|_{2}^2:=e(\boldsymbol{\xi}^*),
    \end{equation}
    then strong duality holds, i.e., \(d(\pmb{w}^*)=\eta_{\infty}\), and \({\boldsymbol{\xi}}^*(x)\) is the global solution to \eqref{eq:minimax}. Moreover, when \eqref{eq:ruttan_generalization} holds,  the following complementary slackness property holds:
    \begin{equation}\label{eq:slackness}
        w^*_{\ell}\left(e({\boldsymbol{\xi}}^*)-\|\bt(x_{\ell})-{\boldsymbol{\xi}}^*(x_{\ell})\|_{2}^2\right)=0,\ \ \forall 1 \le \ell \le m.
    \end{equation}
\end{proposition}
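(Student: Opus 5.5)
Three ingredients do all the work: weak duality obtained by evaluating $d(\pmb{w})$ at one feasible point, the hypothesis \eqref{eq:ruttan_generalization} closing the gap, and complementary slackness read off from the equality case of an averaging argument.

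\emph{Weak duality.} Fix $\pmb{w}\in{\cal S}$ and any ${\boldsymbol{\xi}}=\bp/q\in\scrR_s$ with $q(x_\ell)\neq 0$ on ${\cal X}$. If $\sum_\ell w_\ell|q(x_\ell)|^2=0$, then $q(x_\ell)=0$ for some $x_\ell$ in the support, which is excluded. So $q$ can be normalized to satisfy $\sum_\ell w_\ell |q(x_\ell)|^2=1$; the pair $(\pmb{a},\pmb{b})$ representing ${\boldsymbol{\xi}}$ is then feasible in \eqref{eq:dual_question}. This gives
\[
d(\pmb{w})\le \sum_{\ell}w_\ell |q(x_\ell)|^2\,\|\bt(x_\ell)-{\boldsymbol{\xi}}(x_\ell)\|_2^2\le e({\boldsymbol{\xi}})\sum_\ell w_\ell|q(x_\ell)|^2=e({\boldsymbol{\xi}}).
\]
Taking the infimum over ${\boldsymbol{\xi}}$ and the maximum over $\pmb{w}$ yields \eqref{eq:weak_dual}. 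Existence of $\pmb{w}^*$ and of the minimizer $(\pmb{a}^*,\pmb{b}^*)$ is assumed in the statement. The condition $m\ge n_i+d+2$ only guarantees that the constraint set in \eqref{eq:dual_question} is nonempty and that $d$ is well defined, since the Vandermonde matrices then have full column rank on distinct nodes.

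\emph{Strong duality and optimality.} Since $q^*(x_\ell)\ne0$, ${\boldsymbol{\xi}}^*$ is finite on ${\cal X}$ and $\eta_\infty\le e({\boldsymbol{\xi}}^*)$. Weak duality gives $d(\pmb{w}^*)\le\eta_\infty$. Combining these with \eqref{eq:ruttan_generalization}:
\[
d(\pmb{w}^*)\le\eta_\infty\le e({\boldsymbol{\xi}}^*)=d(\pmb{w}^*).
\]
Hence every inequality is an equality, $d(\pmb{w}^*)=\eta_\infty$, and ${\boldsymbol{\xi}}^*$ attains the infimum in \eqref{eq:minimax}, so it is a global solution.

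\emph{Complementary slackness.} Evaluate $d(\pmb{w}^*)$ at its own minimizer $(\pmb{a}^*,\pmb{b}^*)$, with normalization $\sum_\ell w^*_\ell|q^*(x_\ell)|^2=1$:
\[
d(\pmb{w}^*)=\sum_\ell w^*_\ell|q^*(x_\ell)|^2\,\|\bt(x_\ell)-{\boldsymbol{\xi}}^*(x_\ell)\|_2^2 .
\]
The weights $\mu_\ell:=w^*_\ell|q^*(x_\ell)|^2$ are nonnegative and sum to $1$. Subtracting this identity from $e({\boldsymbol{\xi}}^*)=e({\boldsymbol{\xi}}^*)\sum_\ell\mu_\ell$ and using \eqref{eq:ruttan_generalization} gives
\[
0=e({\boldsymbol{\xi}}^*)-d(\pmb{w}^*)=\sum_\ell \mu_\ell\bigl(e({\boldsymbol{\xi}}^*)-\|\bt(x_\ell)-{\boldsymbol{\xi}}^*(x_\ell)\|_2^2\bigr).
\]
Each summand is nonnegative, so each vanishes. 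Because $|q^*(x_\ell)|^2>0$, we can divide by it, which yields \eqref{eq:slackness}.

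The main point needing care is that the minimizer's $q^*$ is nonzero on ${\cal X}$; this is exactly where the hypothesis $q^*(x_\ell)\ne0$ is used, both to make ${\boldsymbol{\xi}}^*$ admissible and to pass from $\mu_\ell$ back to $w^*_\ell$. Everything else is bookkeeping.
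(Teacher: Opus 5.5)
Your proof is correct. The paper does not prove this proposition itself but quotes it from \cite[Theorem 4.1]{zhzz:2025}, and your route is the standard Ruttan-type argument behind that result: weak duality by inserting a normalized feasible pair into \eqref{eq:dual_question}, then the sandwich $d(\pmb{w}^*)\le\eta_\infty\le e(\boldsymbol{\xi}^*)=d(\pmb{w}^*)$, then complementary slackness from the averaging identity with weights $w^*_\ell|q^*(x_\ell)|^2$.

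Two small touch-ups. When passing to $\eta_\infty$, say explicitly that any $\boldsymbol{\xi}$ with finite error on ${\cal X}$ can be rewritten, after cancelling common factors, with a denominator nonvanishing on ${\cal X}$. Also drop the claim that $m\ge n_i+d+2$ is what makes $d(\pmb{w})$ well defined: the minimum in \eqref{eq:dual_question} is attained for every $\pmb{w}\in{\cal S}$, because the objective depends on $\pmb{b}$ only through $\sqrt{W}\pmb{\Phi}\pmb{b}$, and that node-count condition is never used in your argument.
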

Within the dual   framework, we quantify the approximation  accuracy of  ${\boldsymbol{\xi}}$ at $\bw\in{\cal S}$ corresponding to minimization \eqref{eq:dual_question} through the relative error metric when $e({\boldsymbol{\xi}})\ne 0$:
\begin{equation}\nonumber
\epsilon(\pmb{w}) := \left|\frac{d(\pmb{w}) - e({\boldsymbol{\xi}})}{e({\boldsymbol{\xi}})}\right|,
\end{equation}
where $\pmb{w}$ represents an approximation to the optimal solution $\pmb{w}^*$ of \eqref{eq:dual_question_simple}. This error measure naturally serves as the termination criterion for the vector-valued dual Lawson's iteration (denoted as {\sf m-d-Lawson}) outlined in Algorithm \ref{alg:Lawson}.

To compute the dual function \(d(\pmb{w})\), a minimization problem \eqref{eq:dual_question} needs to be solved. The following proposition provides the optimality condition for \eqref{eq:dual_question}.  In particular, \eqref{eq:item3_1} connects to Kolmogorov dual criteria \cite[Section 5.3]{zhzh:2026b}, while item (iii) provides efficient ways to compute $d(\bw)$ and the associated $\ba,\bb$  in \eqref{eq:dual_question}.

\begin{proposition}{\rm {(\cite[Proposition 3.1]{zhzz:2025})}}\label{prop:item3}
    For \(\bm{w} \in {\cal S}\), with notation \eqref{eq:Theta_def}, \eqref{eq:a_def} and \eqref{eq:W_def}, we have
    \begin{itemize}
        \item [(i)] \(\pmb{c}(\pmb{w})=\begin{bmatrix}
            \pmb{a}(\pmb{w})\\ \pmb{b}(\pmb{w})
        \end{bmatrix} \in \mathbb{C}^{N+d+1}\) is a solution of \eqref{eq:dual_question} if and only if it is an eigenvector of the Hermitian positive semi-definite generalized eigenvalue problem \((A_{\pmb{w}},B_{\pmb{w}})\) and \(d(\pmb{w})\) is the smallest eigenvalue satisfying
        \begin{equation}\label{eq:item3_1}
            A_{\pmb{w}}\pmb{c}(\pmb{w})=d(\pmb{w})B_{\pmb{w}}\pmb{c}(\pmb{w})\ \ and\ \ \pmb{c}(\pmb{w})^{\rm H}B_{\pmb{w}}\pmb{c}(\pmb{w})=1,
        \end{equation}
        where 
        \begin{equation}\nonumber
            A_{\pmb{w}}:=[-\pmb{\Theta},\pmb{F}\pmb{\Phi}]^{\rm H}\pmb{W}_{\otimes}[-\pmb{\Theta},\pmb{F}\pmb{\Phi}]=\begin{bmatrix}
                \pmb{\Theta}^{\rm H}\pmb{W}_{\otimes}\pmb{\Theta}&-\pmb{\Theta}^{\rm H}\pmb{W}_{\otimes}\pmb{F}\pmb{\Phi}\\-\pmb{\Phi}^{\rm H}\pmb{F}^{\rm H}\pmb{W}_{\otimes}\pmb{\Theta}&\pmb{\Phi}^{\rm H}\pmb{F}^{\rm H}\pmb{W}_{\otimes}\pmb{F}\pmb{\Phi}
            \end{bmatrix},
        \end{equation}
        \begin{equation}\nonumber
            B_{\pmb{w}}:=[0_{m \times N},\pmb{\Phi}]^{\rm H}W[0_{m \times N},\pmb{\Phi}]=\begin{bmatrix}
                0_{N \times N}&0_{N \times (d+1)}\\0_{(d+1)\times N}&\pmb{\Phi}^{\rm H}W\pmb{\Phi}
            \end{bmatrix};
        \end{equation}
        \item [(ii)] the Hermitian matrix \(H_{\pmb{w}}:=A_{\pmb{w}}-d(\pmb{w})B_{\pmb{w}}\) is positive semi-definite;
        \item [(iii)] let \(\sqrt{W}\pmb{\Phi}=Q_qR_q \in \mathbb{C}^{m \times (d+1)}\) and \(\sqrt{\pmb{W}_{\otimes}}\pmb{\Theta}=Q_pR_p \in \mathbb{C}^{ms \times N}\) be the thin QR factorizations where \(Q_q \in \mathbb{C}^{m\times \tilde{n}_2}\), \(Q_p \in \mathbb{C}^{ms \times \tilde{n}_1}\), \(R_q \in \mathbb{C}^{\tilde{n}_2 \times (d+1)}\), \(R_p \in \mathbb{C}^{\tilde{n}_1 \times N}\) with \(\tilde{n}_1={\rm rank}(\sqrt{\pmb{W}_{\otimes}}\pmb{\Theta})\) and \(\tilde{n}_2={\rm rank}(\sqrt{W}\pmb{\Phi})\). Then \(d(\pmb{w})\) is the smallest eigenvalue of the following Hermitian positive semi-definite matrix
        \begin{equation}\label{eq:eg_matrix_important}
            S(\pmb{w}):=S_{\rm F}-S_{qp}S_{qp}^{\rm H} \in \mathbb{C}^{\tilde{n}_2 \times \tilde{n}_2},
        \end{equation}
        where 
        \begin{equation}\nonumber
            S_{\rm F}=Q_q^{\rm H}\left(\sum\limits_{i=1}^{s}|F_{i}|^2\right)Q_q \in \mathbb{C}^{\tilde{n}_2 \times \tilde{n}_2},\ \ S_{qp}=Q_q^{\rm H}\pmb{F}^{\rm H}Q_p \in \mathbb{C}^{\tilde{n}_2 \times \tilde{n}_1},
        \end{equation}
        with \(|F_{i}|={\rm diag}(|t_{i}(x_1)|,\dots,|t_{i}(x_m)|) \in \mathbb{C}^{m \times m}\). Moreover, \(\sqrt{d(\pmb{w})}\) is the smallest singular value of \((I-Q_pQ_p^{\rm H})\pmb{F}Q_q \in \mathbb{C}^{ms \times \tilde{n}_2}\) with the associated singular vector \(\hat{\pmb{b}}=R_q\pmb{b}(\pmb{w})\). Also, \(R_p\pmb{a}(\pmb{w})=S_{qp}^{\rm H}\hat{\pmb{b}}\).
    \end{itemize}
\end{proposition}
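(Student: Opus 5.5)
The plan is to treat \eqref{eq:dual_question} as a constrained Hermitian quadratic minimization and read off (i)–(iii) from Lagrange/Rayleigh-quotient theory, then eliminate the numerator variable $\pmb{a}$ by a projection.

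\textbf{Item (i).} Write $\pmb{c}=[\pmb{a}^{\rm T},\pmb{b}^{\rm T}]^{\rm T}$. The objective in \eqref{eq:dual_question} equals $\pmb{c}^{\rm H}A_{\pmb{w}}\pmb{c}$, because $[-\pmb{\Theta},\pmb{F}\pmb{\Phi}]\pmb{c}=\pmb{F}\pmb{\Phi}\pmb{b}-\pmb{\Theta}\pmb{a}$ stacks the residual values $t_i(x_\ell)q(x_\ell)-p_i(x_\ell)$. The constraint is $\pmb{c}^{\rm H}B_{\pmb{w}}\pmb{c}=\|\sqrt{W}\pmb{\Phi}\pmb{b}\|_2^2=1$. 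Both $A_{\pmb{w}}$ and $B_{\pmb{w}}$ are Gram-type, hence Hermitian positive semidefinite.

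The pencil may be singular, since $B_{\pmb{w}}$ has a large null space. So rather than citing generic Courant–Fischer, I would argue directly. At a minimizer, the Lagrange condition (real derivative in $\pmb{c}$ and $\bar{\pmb{c}}$) gives $A_{\pmb{w}}\pmb{c}=\mu B_{\pmb{w}}\pmb{c}$. Multiplying by $\pmb{c}^{\rm H}$ shows $\mu$ equals the objective value, so $\mu=d(\pmb{w})$. Conversely, any eigenpair with $\pmb{c}^{\rm H}B_{\pmb{w}}\pmb{c}=1$ is feasible with objective equal to its eigenvalue. Hence the minimizers are exactly the $B$-normalized eigenvectors for the smallest such eigenvalue.

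The constraint set is not compact in $\pmb{a}$, so existence of the minimum needs an argument. For fixed $\pmb{b}$, the optimal $\pmb{a}$ solves a least squares problem. Reducing to $\pmb{b}$ restricted to the unit sphere of the $\sqrt{W}\pmb{\Phi}$-norm, modulo its kernel, gives compactness. This is the same reduction as in (iii), so I would prove (iii)'s elimination first.

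\textbf{Item (ii).} For any $\pmb{c}$ with $\pmb{c}^{\rm H}B_{\pmb{w}}\pmb{c}>0$, scaling to the feasible set gives $\pmb{c}^{\rm H}A_{\pmb{w}}\pmb{c}\ge d(\pmb{w})\,\pmb{c}^{\rm H}B_{\pmb{w}}\pmb{c}$. If $\pmb{c}^{\rm H}B_{\pmb{w}}\pmb{c}=0$, then $\pmb{c}^{\rm H}H_{\pmb{w}}\pmb{c}=\pmb{c}^{\rm H}A_{\pmb{w}}\pmb{c}\ge0$ directly. Either way $H_{\pmb{w}}\succeq0$.

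\textbf{Item (iii), the key computation.} Fix $\pmb{b}$ and set $\hat{\pmb{b}}=R_q\pmb{b}$, so that $\sqrt{W}\pmb{\Phi}\pmb{b}=Q_q\hat{\pmb{b}}$ and the constraint becomes $\|\hat{\pmb{b}}\|_2=1$. The map $\pmb{b}\mapsto\hat{\pmb{b}}$ is onto $\mathbb{C}^{\tilde n_2}$ because $R_q$ has full row rank.

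Next I use that $\sqrt{\pmb{W}_{\otimes}}$ is diagonal and commutes with the block-diagonal $\pmb{F}$. This gives $\sqrt{\pmb{W}_{\otimes}}\pmb{F}\pmb{\Phi}\pmb{b}=\pmb{F}\sqrt{W}_{\otimes\text{-lift}}$, which I need to check carefully: each block gives $F_i\sqrt{W}\pmb{\Phi}\pmb{b}=F_iQ_q\hat{\pmb{b}}$, so the stacked vector is $\pmb{F}Q_q\hat{\pmb{b}}$. The objective is therefore $\|\pmb{F}Q_q\hat{\pmb{b}}-Q_pR_p\pmb{a}\|_2^2$.

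Minimizing over $\pmb{a}$ (with $R_p$ onto) gives $R_p\pmb{a}=Q_p^{\rm H}\pmb{F}Q_q\hat{\pmb{b}}=S_{qp}^{\rm H}\hat{\pmb{b}}$, and the residual is $\|(I-Q_pQ_p^{\rm H})\pmb{F}Q_q\hat{\pmb{b}}\|_2^2$. Minimizing over unit $\hat{\pmb{b}}$ then yields the smallest squared singular value. Expanding the Gram matrix gives
\[
Q_q^{\rm H}\pmb{F}^{\rm H}\pmb{F}Q_q-S_{qp}S_{qp}^{\rm H},
\]
with $\pmb{F}^{\rm H}\pmb{F}=\sum_i|F_i|^2$. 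This is $S(\pmb{w})$, which is positive semidefinite since it is a Gram matrix.

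\textbf{Main obstacle.} The main care point is the bookkeeping when $\sqrt{W}\pmb{\Phi}$ or $\sqrt{\pmb{W}_{\otimes}}\pmb{\Theta}$ is rank deficient (zero weights). There I would verify that $\tilde n_1,\tilde n_2$ as defined keep $R_p,R_q$ of full row rank, and that different $\pmb{b}$ with the same $\hat{\pmb{b}}$ give the same objective.
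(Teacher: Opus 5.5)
Your proposal is correct. The paper does not prove this proposition itself; it imports it verbatim from \cite[Proposition 3.1]{zhzz:2025}. What you have written is therefore a self-contained proof of a result the paper only cites, and it follows the natural route. You characterize the constrained minimum by Lagrange/Rayleigh-quotient reasoning for (i)--(ii), and you eliminate $\pmb{a}$ with the orthogonal projector $I-Q_pQ_p^{\rm H}$ for (iii). Doing (iii) first is the right order: compactness of the unit sphere in $\mathbb{C}^{\tilde n_2}$ gives existence of a minimizer, which the converse direction of your (i) genuinely needs. The citation buys brevity; your version makes explicit the degenerate case of vanishing weights, which the statement allows since $\pmb{w}$ ranges over all of ${\cal S}$.

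Three small clean-ups:
\begin{itemize}
\item The garbled identity you were reaching for is simply $\sqrt{\pmb{W}_{\otimes}}\,\pmb{F}=\pmb{F}\sqrt{W}$, since blockwise $\sqrt{W}F_i=F_i\sqrt{W}$ with both factors diagonal. This gives $\sqrt{\pmb{W}_{\otimes}}\pmb{F}\pmb{\Phi}\pmb{b}=\pmb{F}Q_q\hat{\pmb{b}}$ immediately.
\item Your ``main obstacle'' resolves itself. $R_q=Q_q^{\rm H}\sqrt{W}\pmb{\Phi}$ has rank $\tilde n_2$, equal to its number of rows, and likewise for $R_p$. Moreover, objective and constraint depend on $\pmb{b}$ only through $\sqrt{W}\pmb{\Phi}\pmb{b}=Q_q\hat{\pmb{b}}$, and on $\pmb{a}$ only through $R_p\pmb{a}$.
\item A large null space of $B_{\pmb{w}}$ does not by itself make the pencil singular; it only produces infinite eigenvalues. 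The pencil is singular exactly when $\ker A_{\pmb{w}}\cap\ker B_{\pmb{w}}\neq\{\pmb{0}\}$, i.e., when some nonzero $(\pmb{a},\pmb{b})$ has $\sqrt{W}\pmb{\Phi}\pmb{b}=\pmb{0}$ and $\sqrt{\pmb{W}_{\otimes}}\pmb{\Theta}\pmb{a}=\pmb{0}$. This can happen when too few weights are positive. Your restriction to $B_{\pmb{w}}$-normalized eigenvectors covers both the regular and singular cases, so the argument stands as is.
\end{itemize}
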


\subsection{The {\sf m-d-Lawson} iteration}\label{subsec:mdlawson}
In the dual   framework, Lawson's iteration has been established as an effective method for solving the dual problem \eqref{eq:dual_question_simple} \cite{zhzz:2025}. Algorithm \ref{alg:Lawson} presents its procedure for the vector-valued rational minimax approximation\footnote{See \url{https://ww2.mathworks.cn/matlabcentral/fileexchange/181757-m_d_lawson} for the MATLAB code of  {\sf m-d-Lawson}.}, where particular attention must be paid to the numerical stability and accuracy in computing both the dual objective value $d(\pmb{w}^{(k)})$ as well as  the associated   vectors\footnote{The evaluation of $\pmb{r}_{i}^{(k)} = \pmb{p}_{i}^{(k)}./\pmb{q}^{(k)}$ demands special care when $\pmb{q}^{(k)}$ has nodal zeros. In this case,  the homogeneous pair from \eqref{eq:paris} has a common zero at a node, and the associated $r_i(x)$ should first be interpreted through an irreducible, pole-free representation on the retained node set before evaluating  $\pmb{r}_{i}^{(k)}$.} $\pmb{r}_{i}^{(k)} = \pmb{p}_{i}^{(k)}./\pmb{q}^{(k)}$ ($1 \le i \le s$) in Step 3. We use the implementation given in \cite{zhzz:2025} where the Vandermonde with Arnoldi (V+A) approach \cite{brnt:2021,hoka:2020,zhsl:2024} is employed. This results in rational components $r^*_i = p^*_i/q^*$ where the numerator and denominator polynomials are represented not in the monomial basis, but rather through a system of discrete orthogonal polynomials of increasing degree (we  {call} the V+A basis). This representation motivates our development in Section  \ref{sec:linearization} of a strong linearization framework suitable for matrix polynomials expressed in generalized graded bases. The algorithm incorporates weight updates governed by \eqref{eq:Update_rule_lawson} (Step 5), with $\beta > 0$ denoting the Lawson exponent (typically initialized at $\beta=1$). For comprehensive convergence analysis of {\sf m-d-Lawson}, we direct readers to \cite[Section 5]{zhzz:2025}.

\begin{algorithm}[h!!!]
\caption{A matrix-valued dual Lawson's iteration ({\sf m-d-Lawson}) for \eqref{eq:minimax}} \label{alg:Lawson}
\begin{algorithmic}[1]
\renewcommand{\algorithmicrequire}{\textbf{Input:}}
\renewcommand{\algorithmicensure}{\textbf{Output:}}
\REQUIRE Given \(\{(x_{\ell},\bt(x_{\ell}))\}_{\ell=1}^m\) with \(x_{\ell} \in  \bar\Omega\), a tolerance for strong duality \(\epsilon_r>0\), the maximum number \(k_{\rm maxit}\) of iterations;
\ENSURE the approximated   rational minimax approximant \({\boldsymbol{\xi}}^{(k)} \in \mathscr{R}_{s}\) of \eqref{eq:minimax} 
\smallskip
\STATE (Initialization) Let \(k=0\); choose \(\bzs<\pmb{w}^{(0)} \in {\cal S}\) and a tolerance \(\epsilon_{\pmb{w}}\) for weights;
\STATE (Filtering: optional) Remove nodes \(x_{\ell}\) with \(w^{(k)}_{\ell}<\epsilon_{\pmb{w}}\);
\STATE compute \(d(\pmb{w}^{(k)})\) and the associated \(\pmb{r}_{i}^{(k)}=\pmb{p}^{(k)}_{i}./\pmb{q}^{(k)}\) according to {Proposition \ref{prop:item3}};
\STATE (Stop rule and evaluation) Stop either if \(k \ge {k_{\rm maxit}}\) or
\begin{equation}\nonumber
    \epsilon(\pmb{w}^{(k)}):=\left|\frac{e({\boldsymbol{\xi}}^{(k)})-d(\pmb{w}^{(k)})}{e({\boldsymbol{\xi}}^{(k)})}\right|<\epsilon_r,\ {\rm where}\ \ e({\boldsymbol{\xi}}^{(k)})=\max_{x_\ell \in {\cal X}}\|\bt(x_{\ell})-{\boldsymbol{\xi}}^{(k)}(x_{\ell})\|_{2}^2;
\end{equation}
\STATE (Updating weights) Update the weight vector \(\pmb{w}^{(k+1)}\) according to 
\begin{equation}\label{eq:Update_rule_lawson}
    w^{(k+1)}_{\ell}=\frac{w^{(k)}_{\ell}{\|\bt(x_{\ell})-{\boldsymbol{\xi}}^{(k)}(x_{\ell})\|_{2}^{\beta}}}{\sum\limits_{i}w^{(k)}_i\|\bt(x_i)-{\boldsymbol{\xi}}^{(k)}(x_i)\|_{2}^{\beta}},\ \ \ \forall \ell,
\end{equation}
and go to Step 2 with \(k\leftarrow k+1\).
\end{algorithmic}
\end{algorithm}

\section{Error bound analysis}\label{sec:erroranalysis}
As mentioned earlier in Remark \ref{rmk:entry_approximate}, we do not directly approximate the larger-scale function \(T(x)\), but instead obtain first \({\boldsymbol{\xi}}^*(x)=[r^*_1(x),\dots,r^*_s(x)]^{\rm T}\) by approximately computing the best uniform approximation of its  {split coefficient vector} \(\bt(x)=[t_1(x),\dots,t_s(x)]^{\rm T}\) over the region of interest \(\bar\Omega\) via {\sf m-d-Lawson}. 
The resulting \(n \times n\)  {rational matrix function}
   $ R^*(x)=\sum\limits_{i=1}^sr^*_i(x)E_i$
is then regarded as an approximation to the original function \(T(x)\). Therefore, it is necessary to conduct a further analysis of the approximation error between \(R^*(x)\) and \(T(x)\) over \(\bar\Omega\). The following lemma is useful for the error estimate between \(R^*(x)\) and \(T(x)\).
\begin{lemma}\label{lem:lem1}
    With the constant matrices \(E_i \in \mathbb{C}^{n\times n}\), \(i=1,\dots,s\), define the following linear mapping
    \begin{equation}\label{eq:lm}
        \mathscr{A}_{\{E_i\}_i}:\bz=[z_1,\dots,z_s]^{\rm T} \in \mathbb{C}^s \mapsto \sum\limits_{i=1}^{s}z_iE_i \in \mathbb{C}^{n \times n}.
    \end{equation}
    Then for each \(\bz\in \mathbb{C}^{s}\), we have \(\left\|\mathscr{A}_{\{E_i\}_i}(\bz)\right\|_{\rm F} \le \sqrt{\left\|\mathscr{G}_{\{E_i\}_i}\right\|_2}\cdot \|\bz\|_2\), where 
    \begin{equation}\label{eq:GE_def}
        \mathscr{G}_{\{E_i\}_i}=\begin{bmatrix}
            {\rm tr}\left(E_1^{\rm H}E_1\right)&\dots&{\rm tr}\left(E_s^{\rm H}E_1\right) \\ \vdots & \ddots & \vdots \\ {\rm tr}\left(E_1^{\rm H}E_s\right)&\dots&{\rm tr}\left(E_s^{\rm H}E_s\right)
        \end{bmatrix} \in \mathbb{C}^{s \times s}.
    \end{equation}
\end{lemma}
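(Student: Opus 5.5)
The plan is to compute the squared Frobenius norm directly through the trace inner product, which turns $\|\mathscr{A}_{\{E_i\}_i}(\bz)\|_{\rm F}^2$ into a Hermitian quadratic form in $\bz$ with matrix $\mathscr{G}_{\{E_i\}_i}$ (or its transpose or conjugate). A Rayleigh-quotient bound then finishes the argument.

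First expand
\begin{equation*}
\Big\|\sum_{i=1}^s z_iE_i\Big\|_{\rm F}^2={\rm tr}\Big(\Big(\sum_{j} z_jE_j\Big)^{\rm H}\Big(\sum_{i} z_iE_i\Big)\Big)=\sum_{i,j=1}^s \bar z_j z_i\,{\rm tr}\big(E_j^{\rm H}E_i\big).
\end{equation*}
The $(i,j)$ entry of $\mathscr{G}_{\{E_i\}_i}$ in \eqref{eq:GE_def} is $g_{ij}={\rm tr}(E_j^{\rm H}E_i)$. Hence the double sum equals $\sum_{i,j} z_i g_{ij}\bar z_j=\bz^{\rm T}\mathscr{G}_{\{E_i\}_i}\bar{\bz}=\bar{\bz}^{\rm H}\mathscr{G}_{\{E_i\}_i}\bar{\bz}$.

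Next record the properties of $\mathscr{G}_{\{E_i\}_i}$. Since $\overline{{\rm tr}(E_j^{\rm H}E_i)}={\rm tr}(E_i^{\rm H}E_j)$, the matrix is Hermitian. It is also positive semidefinite, because it is the Gram matrix of the vectors ${\rm vec}(E_i)$ (up to conjugation). Indeed, the identity above shows the quadratic form at $\bar{\bz}$ is a squared norm, hence nonnegative, and $\bar{\bz}$ ranges over all of $\mathbb{C}^s$. Therefore $\lambda_{\max}(\mathscr{G}_{\{E_i\}_i})=\|\mathscr{G}_{\{E_i\}_i}\|_2$.

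Finally apply the Rayleigh bound $\bv^{\rm H}G\bv\le\lambda_{\max}(G)\|\bv\|_2^2$ with $\bv=\bar{\bz}$, using $\|\bar{\bz}\|_2=\|\bz\|_2$. Taking square roots gives the claim.

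The only delicate point is keeping track of the index convention and the conjugation, so that one correctly identifies the form as $\bar{\bz}^{\rm H}\mathscr{G}\bar{\bz}$ rather than $\bz^{\rm H}\mathscr{G}\bz$. This does not affect the bound, since $\mathscr{G}$ and $\mathscr{G}^{\rm T}$ share the same spectral norm.
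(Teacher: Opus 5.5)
Your proof is correct and follows the same route as the paper: expand $\|\mathscr{A}_{\{E_i\}_i}(\bz)\|_{\rm F}^2$ via the trace into a quadratic form in $\mathscr{G}_{\{E_i\}_i}$, then bound it by $\|\mathscr{G}_{\{E_i\}_i}\|_2\|\bz\|_2^2$. Your conjugation bookkeeping is more careful than the paper's: with the stated $(i,j)$ entries ${\rm tr}(E_j^{\rm H}E_i)$, the paper's form $\bz^{\rm H}\mathscr{G}_{\{E_i\}_i}\bz$ should really be $\bar{\bz}^{\rm H}\mathscr{G}_{\{E_i\}_i}\bar{\bz}$, and, as you note, the bound is unaffected.
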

\begin{proof}
    By definition, we have
    \begin{align}\nonumber
        0 & \le \left\|\mathscr{A}_{\{E_i\}_i}(\bz)\right\|_{\rm F}^2  ={\rm tr}\left(\left(\sum\limits_{i=1}^{s}z_iE_i\right)^{\rm H}\left(\sum\limits_{j=1}^{s}z_jE_j\right)\right) \\ \nonumber &=\sum\limits_{i=1}^{s}\sum\limits_{j=1}^{s}z_j^{\rm H}{\rm tr}\left(E_i^{\rm H}E_j\right)z_i=\bz^{\rm H}\mathscr{G}_{\{E_i\}_i}\bz \le   \|\bz\|_2^2\cdot\left\|\mathscr{G}_{\{E_i\}_i}\right\|_2,
    \end{align}
    leading to the desired result.
\end{proof}
The following theorem gives an error bound between $T$ and $R^*$. 
\begin{theorem}\label{thm:thm1}
    Suppose \({\boldsymbol{\xi}}^*(x)=[r^*_1(x),\dots,r^*_s(x)]^{\rm T}\) and \(\bt(x)=[t_1(x),\dots,t_s(x)]^{\rm T}\) satisfying \(\sup_{x \in \bar\Omega}\|\bt(x)-{\boldsymbol{\xi}}^*(x)\|_2\le \epsilon^{\Omega}_{{\boldsymbol{\xi}}^*}\), then
    \begin{equation}\label{eq:err_approx}
        \sup_{x \in\bar \Omega}\left\|T(x)-R^*(x)\right\|_{\rm F}\leq\sqrt{\left\|\mathscr{G}_{\{E_i\}_i}\right\|_2}\cdot \epsilon^{\Omega}_{{\boldsymbol{\xi}}^*},
    \end{equation}
    where \(R^*(x)=\sum\limits_{i=1}^{s}r^*_i(x)E_i\), \(T(x)=\sum\limits_{i=1}^{s}t_i(x)E_i\) and \(\mathscr{G}_{\{E_i\}_i} \in \mathbb{C}^{s \times s}\) is defined in \eqref{eq:GE_def}.
\end{theorem}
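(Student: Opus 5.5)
The plan is to reduce the statement to Lemma~\ref{lem:lem1} pointwise and then take the supremum. Fix any $x\in\bar\Omega$. By linearity of the split forms,
\begin{equation*}
T(x)-R^*(x)=\sum_{i=1}^s\bigl(t_i(x)-r_i^*(x)\bigr)E_i=\mathscr{A}_{\{E_i\}_i}\bigl(\bt(x)-{\boldsymbol{\xi}}^*(x)\bigr),
\end{equation*}
where $\mathscr{A}_{\{E_i\}_i}$ is the linear map \eqref{eq:lm}. We use only the fact that the $E_i$ are constant, so the same map applies at every $x$.

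Next, apply Lemma~\ref{lem:lem1} with $\bz=\bt(x)-{\boldsymbol{\xi}}^*(x)\in\mathbb{C}^s$. This gives
\begin{equation*}
\|T(x)-R^*(x)\|_{\rm F}\le\sqrt{\|\mathscr{G}_{\{E_i\}_i}\|_2}\,\|\bt(x)-{\boldsymbol{\xi}}^*(x)\|_2\le\sqrt{\|\mathscr{G}_{\{E_i\}_i}\|_2}\,\epsilon^{\Omega}_{{\boldsymbol{\xi}}^*}.
\end{equation*}
The second inequality is the hypothesis $\sup_{x\in\bar\Omega}\|\bt(x)-{\boldsymbol{\xi}}^*(x)\|_2\le\epsilon^{\Omega}_{{\boldsymbol{\xi}}^*}$. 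The right-hand side does not depend on $x$, so taking the supremum over $\bar\Omega$ yields \eqref{eq:err_approx}.

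There is no real obstacle. The only point to check is that the hypothesis implicitly requires ${\boldsymbol{\xi}}^*$ to be finite on $\bar\Omega$, i.e., no poles of $q^*$ in $\bar\Omega$. Under that assumption $R^*(x)$ is well defined at every point and the pointwise identity holds. If the supremum bound is finite, it already excludes poles, so no separate case analysis is needed.
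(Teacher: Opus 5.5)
Your proposal is correct and is essentially the paper's proof. The paper also writes $T(x)-R^*(x)=\mathscr{A}_{\{E_i\}_i}(\bt(x)-{\boldsymbol{\xi}}^*(x))$ by linearity of the map, applies Lemma~\ref{lem:lem1} pointwise for each $x\in\bar\Omega$, bounds the result by the hypothesis, and passes to the supremum.
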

\begin{proof}
    Using Lemma \ref{lem:lem1} and with the notation in \eqref{eq:lm},   for each \(x \in \bar\Omega\),
    \begin{align}\nonumber
        \|T(x)-R^*(x)\|_{\rm F} &= \left\|\mathscr{A}_{\{E_i\}_i}(\bt(x))-\mathscr{A}_{\{E_i\}_i}({\boldsymbol{\xi}}^*(x))\right\|_{\rm F} \\ \nonumber &= \left\|\mathscr{A}_{\{E_i\}_i}(\bt(x)-{\boldsymbol{\xi}}^*(x))\right\|_{\rm F} \\ \nonumber & \le \sqrt{\left\|\mathscr{G}_{\{E_i\}_i}\right\|_2}\cdot \|\bt(x)-{\boldsymbol{\xi}}^*(x)\|_2 \le \sqrt{\left\|\mathscr{G}_{\{E_i\}_i}\right\|_2}\cdot \epsilon^{\Omega}_{{\boldsymbol{\xi}}^*}.
    \end{align}
\end{proof}
\begin{corollary}\label{coro:coro0}
    Let \({\boldsymbol{\xi}}^*\),  \(R^*\) and \(T\) be as defined in Theorem \ref{thm:thm1}, \(\mathscr{G}_{\{E_i\}_i}\) as in \eqref{eq:GE_def}, and assume \(\sup_{x \in \bar\Omega}\|\bt(x)-{\boldsymbol{\xi}}^*(x)\|_2\le \epsilon_{{\boldsymbol{\xi}}^*}^{\Omega}\). Then for any eigenpair \((\lambda, \bu)\) of \(R^*\), where \(\lambda \in \Omega\) and \(\|\bu\|_2=1\), it holds \(\|T(\lambda)\bu\|_2\le \sqrt{\|\mathscr{G}_{\{E_i\}_i}\|_2}\cdot \epsilon_{{\boldsymbol{\xi}}^*}^{\Omega}\).
\end{corollary}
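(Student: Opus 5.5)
The plan is to write $T(\lambda)\bu$ as the difference $(T(\lambda)-R^*(\lambda))\bu$ and then apply the matrix error bound of Theorem~\ref{thm:thm1} pointwise at $x=\lambda$.

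First, since $(\lambda,\bu)$ is an eigenpair of $R^*$ with $\lambda\in\Omega$, we have $R^*(\lambda)\bu=\bzs$. This requires $R^*(\lambda)$ to be well defined, i.e.\ $q^*(\lambda)\neq 0$. This is implicit in calling $\lambda$ an eigenvalue of the rational function $R^*$. It is also consistent with the standing assumption that ${\boldsymbol{\xi}}^*$ has no poles in $\bar\Omega$, which the hypothesis $\sup_{x\in\bar\Omega}\|\bt(x)-{\boldsymbol{\xi}}^*(x)\|_2\le\epsilon^{\Omega}_{{\boldsymbol{\xi}}^*}<\infty$ already forces. Hence
\[
T(\lambda)\bu = T(\lambda)\bu - R^*(\lambda)\bu = \bigl(T(\lambda)-R^*(\lambda)\bigr)\bu .
\]

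Second, we bound the right-hand side using the standard inequalities $\|M\bu\|_2\le\|M\|_2\|\bu\|_2\le\|M\|_{\rm F}\|\bu\|_2$ together with $\|\bu\|_2=1$. This gives
\[
\|T(\lambda)\bu\|_2\le \|T(\lambda)-R^*(\lambda)\|_{\rm F}\le \sup_{x\in\bar\Omega}\|T(x)-R^*(x)\|_{\rm F}.
\]
The supremum is legitimate because $\lambda\in\Omega\subset\bar\Omega$. Invoking \eqref{eq:err_approx} from Theorem~\ref{thm:thm1} then yields the claimed bound $\sqrt{\|\mathscr{G}_{\{E_i\}_i}\|_2}\,\epsilon^{\Omega}_{{\boldsymbol{\xi}}^*}$.

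No step is a real obstacle. The only point needing care is the first one: confirming that $R^*(\lambda)$ is finite, so that the eigen-equation $R^*(\lambda)\bu=\bzs$ holds literally. Everything else is the spectral-versus-Frobenius norm comparison and Theorem~\ref{thm:thm1}.
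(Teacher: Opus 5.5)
Your proposal is correct and matches the paper's argument. Both use $R^*(\lambda)\bu=\bzs$ to reduce to $\|(T(\lambda)-R^*(\lambda))\bu\|_2$, pass from the spectral norm to the Frobenius norm (a step the paper leaves implicit), and invoke Theorem~\ref{thm:thm1} at $x=\lambda\in\bar\Omega$. Your direct substitution is slightly cleaner than the paper's triangle-inequality display, which writes $-R^*(\lambda)\bu$ where $+R^*(\lambda)\bu$ is meant; the slip is harmless because that term vanishes.
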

\begin{proof}
The assertion can be seen from
    \begin{align}\nonumber
        \|T(\lambda)\bu\|_2&=\|(T(\lambda)-R^*(\lambda))\bu-R^*(\lambda)\bu\|_2 \\ \nonumber & \le \|(T(\lambda)-R^*(\lambda))\bu\|_2+\|R^*(\lambda)\bu\|_2 \\ \nonumber & \le \|T(\lambda)-R^*(\lambda)\|_2\cdot\|\bu\|_2\le \sqrt{\left\|\mathscr{G}_{\{E_i\}_i}\right\|_2}\cdot \epsilon_{{\boldsymbol{\xi}}^*}^{\Omega},
    \end{align}
    where the last inequality follows from Theorem \ref{thm:thm1} and the condition \(
    \|\bu\|_2=1\).
\end{proof}
\begin{remark}\label{rmk:practical_mdlawson_err}
    In practical applications of Algorithm \ref{alg:Lawson}, we can prescribe an error threshold $\epsilon^{{\cal X}}_{{\boldsymbol{\xi}}^*}>0$ to ensure $\max_{x_{\ell} \in {\cal X}}\|\bt(x_{\ell})-{\boldsymbol{\xi}}^*(x_{\ell})\|_{2}\le \epsilon^{{\cal X}}_{{\boldsymbol{\xi}}^*}$. If ${\cal X}$ is a sufficiently refined discretization of $\Gamma=\partial\Omega$, $\epsilon^{{\cal X}}_{{\boldsymbol{\xi}}^*}$ serves as a numerical approximation of $\epsilon^{\Omega}_{{\boldsymbol{\xi}^*}}$ from Theorem \ref{thm:thm1}.   Furthermore, if ${\boldsymbol{\xi}}^*(x)$ is pole-free in  $\bar\Omega$, the maximum norm principle \cite[Theorem 1]{cond:2020} guarantees that
$$\sup_{x \in {\cal X}} \|\bt(x)-{\boldsymbol{\xi}}^*(x)\|_2 \le \sup_{x \in \partial \Omega} \|\bt(x)-{\boldsymbol{\xi}}^*(x)\|_2=\sup_{x \in \bar\Omega} \|\bt(x)-{\boldsymbol{\xi}}^*(x)\|_2 \le \epsilon^{\Omega}_{{\boldsymbol{\xi}}^*}.  $$
\end{remark}

{
\begin{proposition} \label{prop:spectral_count_preservation}
Let $\Gamma=\partial\Omega$. Assume that $T$ is a regular (i.e., ${\rm det}(T(x))\not \equiv 0$) matrix-valued function analytic on  $\Omega$, and that $T(x)$ is nonsingular for all $x\in\Gamma$. Let $R^*(x)=P^*(x)/q^*(x)$ be pole-free on  $\Omega$. If
\begin{equation}\label{eq:eta_boundary_count}
    \nu_{\Gamma}:=\sup_{x\in\Gamma}
    \left\|T(x)^{-1}\bigl(T(x)-R^*(x)\bigr)\right\|_2<1,
\end{equation}
then $T$ and $R^*$ have the same number of eigenvalues in $\Omega$, counting algebraic multiplicities. Moreover, since $q^*$ has no zeros in $\Omega$, $R^*$ and $P^*$ have the same eigenvalues in $\Omega$, with the same multiplicities. 
\end{proposition}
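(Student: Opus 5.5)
The plan is to use the scalar argument principle (Rouché's theorem) on determinants, applied to a homotopy between $T$ and $R^*$. For $\tau\in[0,1]$ set
\begin{equation*}
H(x,\tau):=T(x)+\tau\bigl(R^*(x)-T(x)\bigr)=T(x)\bigl(I-\tau\,T(x)^{-1}(T(x)-R^*(x))\bigr),
\end{equation*}
which is defined for $x\in\Gamma$, where $T(x)$ is invertible. Since $R^*$ is pole-free on $\Omega$, I will also assume that $q^*$ has no zeros on $\Gamma$. This is implicit in \eqref{eq:eta_boundary_count}: otherwise $\nu_\Gamma$ would be infinite. Then $R^*$ is analytic on $\Omega$ and continuous up to $\Gamma$, like $T$.

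\textbf{Step 1: invertibility on the boundary.} For $x\in\Gamma$ and $\tau\in[0,1]$ we have $\|\tau T(x)^{-1}(T(x)-R^*(x))\|_2\le\nu_\Gamma<1$. By the Neumann series, the factor $I-\tau T(x)^{-1}(T(x)-R^*(x))$ is invertible, and hence $\det H(x,\tau)\neq 0$ on $\Gamma\times[0,1]$.

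\textbf{Step 2: homotopy invariance.} The function $f_\tau(x):=\det H(x,\tau)$ is analytic in $x$ on $\Omega$ and jointly continuous on $\bar\Omega\times[0,1]$. The number of its zeros in $\Omega$ is the winding number of $f_\tau(\Gamma)$ about $0$. Because $f_\tau$ never vanishes on $\Gamma$, this winding number is constant in $\tau$ by continuity. For a Jordan curve with only continuous (not smooth) boundary values, I would make this rigorous either through the continuous argument increment or by approximating $\Gamma$ from inside by smooth curves that enclose all zeros. Since $\det T\not\equiv0$ and $\det T\neq0$ on $\Gamma$, the zeros of $\det T$ in $\Omega$ are isolated and finite in number. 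Equivalently, one can apply Rouché's inequality directly: on $\Gamma$,
\begin{equation*}
\det R^*=\det T\cdot\det\bigl(I-T^{-1}(T-R^*)\bigr),
\end{equation*}
and the second factor has winding number zero. Indeed, $g_\tau(x)=\det(I-\tau K(x))$ with $K=T^{-1}(T-R^*)$ never vanishes on $\Gamma\times[0,1]$, and $g_0\equiv1$. The argument principle then gives $\#\mathrm{zer}(\det R^*)=\#\mathrm{zer}(\det T)$ in $\Omega$, counted with multiplicity. In particular $\det R^*\not\equiv0$, so $R^*$ is regular.

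\textbf{Step 3: identification with algebraic multiplicities.} For a regular analytic matrix function, the algebraic multiplicity of an eigenvalue $\lambda$ equals the order of $\lambda$ as a zero of $\det$. This is the standard identity via the local Smith form: the sum of the partial multiplicities equals the zero order of the determinant. So the zero counts from Step 2 are exactly the eigenvalue counts for $T$ and $R^*$.

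\textbf{Step 4: from $R^*$ to $P^*$.} We have $\det P^*(x)=q^*(x)^n\det R^*(x)$, and $q^*$ has no zeros in $\Omega$. Hence the zeros of $\det P^*$ and $\det R^*$ in $\Omega$ coincide with equal orders. Equivalently, $P^*=q^*R^*$ with $q^*$ a nonvanishing scalar analytic factor, so the local Smith forms agree, and eigenvalues and partial multiplicities match.

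The main obstacle I expect is Step 2 in the boundary regularity actually assumed, namely a merely Jordan curve with $T$ only continuous up to $\Gamma$. I would first try to shrink $\Gamma$ slightly to a smooth curve inside $\Omega$ on which the strict inequality persists by uniform continuity. Alternatively, I would invoke the argument principle for functions continuous on $\bar\Omega$ and nonvanishing on $\Gamma$.
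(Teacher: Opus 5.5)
Your proposal is correct and follows the paper's proof. Both use the homotopy $T+\tau(R^*-T)$, the Neumann-series argument for nonsingularity on $\Gamma$, and the identity $\det P^*=(q^*)^n\det R^*$ for the final assertion. The only difference is that the paper cites \cite[Lemma 2.1]{biho:2013} for invariance of the eigenvalue count along the homotopy, whereas you reprove it via the argument principle for $\det$, which is how that lemma is established. You also take extra care with the Jordan-curve boundary and with the implicit requirement that $R^*$ be finite on $\Gamma$.
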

\begin{proof} This is a consequence of \cite[Lemma 2.1]{biho:2013}. Indeed, setting $E(x)=R^*(x)-T(x)$ and $T_s(x)=T(x)+sE(x)$ for $s\in[0,1]$, we have for every $x\in\Gamma$,
\[
    T_s(x)=T(x)\left(I+sT(x)^{-1}E(x)\right),\qquad
    \left\|sT(x)^{-1}E(x)\right\|_2\leq s\cdot \nu_{\Gamma}<1.
\]
Hence $I+sT(x)^{-1}E(x)$ is nonsingular by the Neumann series criterion, and so is $T_s(x)$ on $\Gamma$ for every $s\in[0,1]$. Applying directly \cite[Lemma 2.1]{biho:2013} to the homotopy $T_s$ proves that $T=T_0$ and $R^*=T_1$ have identical eigenvalue counts in $\Omega$. The pole-free assumption implies $\det (R^*(x))=\det (P^*(x))/[q^*(x)]^n$ with a nonvanishing denominator in $\Omega$, which gives the last assertion.
\end{proof}
\begin{remark}\label{rmk:numerical_eta_estimate}
For the pole-free rational approximant $R^*$ computed from the discrete minimax problem, \eqref{eq:err_approx} implies
\begin{align}\label{eq:eta_bound_by_approx}
    \nu_{\Gamma}
    &\leq \left(\sup_{x\in\Gamma}\|T(x)^{-1}\|_2\right)
    \left(\sup_{x\in\Gamma}\|T(x)-R^*(x)\|_2\right)  \leq \left(\sup_{x\in\Gamma}\|T(x)^{-1}\|_2\right)
    \sqrt{\left\|\mathscr{G}_{\{E_i\}_i}\right\|_2}\,
    \epsilon_{{\boldsymbol{\xi}^*}}^{\Omega}.
\end{align}
In view of Remark \ref{rmk:practical_mdlawson_err}, on a sufficiently refined boundary validation set ${\cal X}_{\rm val}\subset\Gamma$ we may compute
\begin{equation}\label{eq:minimaxerr-val}
    \epsilon_{{\boldsymbol{\xi}^*}}^{{\cal X}_{\rm val}}
    :=\max_{x_{\ell}\in{\cal X}_{\rm val}}
      \|\bt(x_{\ell})-{\boldsymbol{\xi}}^*(x_{\ell})\|_2,\qquad
    \alpha_{{\cal X}_{\rm val}}
    :=\max_{x_{\ell}\in{\cal X}_{\rm val}}\|T(x_{\ell})^{-1}\|_2,
\end{equation}
and approximate   $\nu_{\Gamma} $ by 
\begin{equation}\label{eq:eta_discrete_apriori_est}
    \widehat{\nu}_{\Gamma}^{\,{\rm bd}}
    :=\alpha_{{\cal X}_{\rm val}}
      \sqrt{\left\|\mathscr{G}_{\{E_i\}_i}\right\|_2}\,
      \epsilon_{{\boldsymbol{\xi}}^*}^{{\cal X}_{\rm val}},
\end{equation}
which  uses only the discrete approximation error together with evaluations of $T$ on the validation boundary set. 
We shall report numerical experiments on this factor for verifying Proposition \ref{prop:spectral_count_preservation} in Section \ref{sec:numerical}. 
\end{remark}
}
 
\section{Linearization and the framework of {\sf NEP\_MiniMax}}\label{sec:linearization}
Let 
\begin{equation}\nonumber
    {\boldsymbol{\xi}}^*(x)=[r^*_1(x),\dots,r^*_s(x)]^{\rm T}=\left[\frac{p^*_1(x)}{q^*(x)},\dots,\frac{p^*_s(x)}{q^*(x)}\right]^{\T},
\end{equation} 
be obtained from \eqref{eq:bestfX} and  
\begin{equation}
    R^*(x)=\frac{1}{q^*(x)}\sum\limits_{i=1}^{s}p^*_i(x)E_i=:\frac{1}{q^*(x)}P^*(x),
\end{equation}
where  \(P^*(x)\) is a polynomial matrix on \(\bar\Omega\). 
Clearly, every eigenvalue of $R^*(x)$ must also be an eigenvalue of $P^*(x)$, though the converse does not always hold. Importantly, {\sf m-d-Lawson} delivers an approximately best uniform approximation for $\bt(x)=[t_1(x),\dots,t_s(x)]^{\rm T}$ over $\bar\Omega$. As demonstrated numerically in Section \ref{sec:numerical}, the approximant ${\boldsymbol{\xi}}^*(x)$ computed via {\sf m-d-Lawson} is typically pole-free in $\Omega$ and on its boundary, i.e., ${\rm zer}(q^*) \cap \bar\Omega=\emptyset$.  {For the linearization analysis and the conversion from the REP to the PEP, we explicitly assume hereafter that $q^*$ has no zeros on $\bar\Omega$ and that $P^*$ is regular, i.e., $\det (P^*(x))\not\equiv 0$.} Consequently, 
$\Omega \cap \Lambda(R^*) = \Omega \cap \Lambda(P^*),$
indicating that the favorable approximation properties of {\sf m-d-Lawson} allow the rational eigenvalue problem (REP) for $R^*$ to be converted into a polynomial eigenvalue problem (PEP) for $P^*$. Thus, approximating the eigenpairs of $T$ on $\Omega$ reduces to solving 
$$P^*(\lambda)\pmb{u} = 0, \quad (\lambda,\pmb{u}) \in \Omega \times \left(\mathbb{C}^n \setminus \left\{\bzs\right\}\right).  $$

To numerically solve the aforementioned PEP, stability considerations are essential. For instance, an implementation of {\sf m-d-Lawson} proposed in \cite{zhzz:2025} utilizes the Vandermonde with Arnoldi (V+A) method \cite{brnt:2021}. This approach expresses the polynomials $p_i^*(x)$ and $q^*(x)$ in terms of a discrete orthogonal polynomial basis \cite{brnt:2021,zhsl:2024,zhna:2026} $\{\vartheta_j(x)\}_{j=0}^\gamma$ constructed with respect to the sample nodes $\mathcal{X}$.  {Section \ref{subsec:linearP} constructs the particular V+A recurrence-based pencil needed here, transforming the problem to a  GEP  $(C_0, C_1)$. Furthermore, under the regularity assumption on $P^*$ stated above, Section \ref{subsec:stronglinear} verifies directly that this pencil is strong.}

\subsection{V+A basis and the representation of \(P^*(x)\)}\label{subsec:RepP}
Let $$\gamma=\max\left\{\max_{1 \le i \le s}\{n_{i}\},d\right\},$$ where \(n_{i}\ \forall 1\le i \le s\) and \(d\) are the degree specified by the user for approximation of \(p^*_{i}(x)\) and \(q^*(x)\), respectively. According to the Vandermonde with Arnoldi (V+A) \cite{brnt:2021,zhsl:2024,zhna:2026}, we have a discrete orthogonal polynomial (V+A) basis $\{\vartheta_j(x)\}_{j=0}^\gamma$ satisfying \(\vartheta_j(x) \in \mathbb{P}_{j}\ \forall 1\le  j\le  \gamma\)  and 
\begin{equation}\label{eq:recurrence_relation}
    x\left[\vartheta_{0}(x) ,\vartheta_{1}(x) ,\dots,\vartheta_{\gamma}(x) \right] =\left[\vartheta_{0}(x) ,\vartheta_{1}(x) ,\dots,\vartheta_{\gamma}(x) ,\vartheta_{\gamma+1}(x) \right] H,
\end{equation}
where 
\begin{equation}\label{eq:H_def}
    H=\begin{bmatrix}h_{1,1}&h_{1,2}&\dots&\dots&h_{1,\gamma}&h_{1,\gamma+1}\\ h_{2,1}&h_{2,2}&\dots&\dots&h_{2,\gamma}&h_{2,\gamma+1}\\ &h_{3,2}&\dots&\dots&h_{3,\gamma}&h_{3,\gamma+1}\\ &&\ddots&&\vdots&\vdots\\ &&&\ddots&h_{\gamma,\gamma}&h_{\gamma,\gamma+1}\\ &&&&h_{\gamma+1,\gamma}&h_{\gamma+1,\gamma+1}\\&&&&&h_{\gamma+2,\gamma+1}\end{bmatrix} \in \mathbb{C}^{(\gamma+2)\times(\gamma+1)}
\end{equation}
is an irreducible (i.e., $h_{i+1,i}\ne 0$) upper-Hessenberg matrix obtained from the V+A process \cite{brnt:2021,zhsl:2024,zhna:2026}. In particular, if the nodes $\mathcal{X} = \{x_j\}_{j=1}^m \subset \mathbb{R}$ are real-valued, the matrix $H$ reduces to a tridiagonal form; moreover, it can  {be} proved using a complex version of the implicit Q theorem \cite[Theorem 7.4.2]{govl:2013} that this property extends to the complex case when all nodes $\mathcal{X} \subset \mathbb{C}$ lie on a straight line in the complex plane.
With this V+A basis,   we can write \(p^*_i(x)=\sum\limits_{j=0}^{\gamma}a^*_{i,j}\vartheta_j(x)\), and consequently
\begin{align}\label{eq:P_T_def} 
    P^*(x)=\sum\limits_{i=1}^{s}\left(\sum\limits_{j=0}^{\gamma}a^*_{i,j}\vartheta_j(x)\right)E_i   = \sum\limits_{j=0}^{\gamma}\vartheta_j(x)\left(\sum\limits_{i=1}^{s}a^*_{i,j}E_i\right)=:\sum\limits_{j=0}^{\gamma}\vartheta_j(x)A_j.
\end{align}
{Throughout the remainder of this section, $\gamma$ is the \emph{grade} assigned to the regular matrix polynomial $P^*$, rather than an assertion that its actual degree is exactly $\gamma$. Since the V+A basis is degree-graded, if $k_\gamma\ne 0$ denotes the leading scalar coefficient of $\vartheta_\gamma$, then the coefficient of $x^\gamma$ in the monomial representation of $P^*$ is $k_\gamma A_\gamma$. Hence $\deg P^*\leq \gamma$. Note that the matrix $A_\gamma$ may be singular, and cancellation in $\sum_{i=1}^s a^*_{i,\gamma}E_i$ may even yield $A_\gamma=0$. Such cases correspond to possible eigenstructure at infinity relative to grade $\gamma$ and are included below through the reversal condition in the definition of strong linearization.}
\begin{remark}
It is worth noting that as long as $H$ is irreducible,  $\{\vartheta_i(x)\}_{i=0}^{\gamma}$ forms a family of polynomials with strictly increasing degrees. That is, $\{\vartheta_i(x)\}_{i=0}^{\gamma}$ is a degree-graded system.  Notably, common bases such as monomial bases and orthogonal polynomial bases (including Chebyshev polynomials) satisfy these recurrence relations as special cases. 
\end{remark}

\subsection{A linearization of \(P^*(x)\)}\label{subsec:linearP}
For the linearization of \(P^*(x)=\sum\limits_{j=0}^{\gamma}\vartheta_j(x)A_j\) in \eqref{eq:P_T_def},  we denote by \(k_j\) the coefficient of the highest degree  in \(\vartheta_j(x)\) with   \(k_0=1\), i.e. \(\vartheta_0(x) \equiv 1\). Then by the recurrence relation \eqref{eq:recurrence_relation},  it holds that 
\begin{equation}\label{eq:coeffs_recurrence}
    h_{j+1,j}=\frac{k_{j-1}}{k_{j}},\ \ j=1,2,\dots,\gamma.
\end{equation}
The following lemma plays an important role in constructing the linearization of \(P^*\).
\begin{lemma}\label{lem:linearization}
    Define the matrix pencil $(C_0, C_1)$ as follows:
$$C_0 := \begin{bmatrix}
H([\gamma],[\gamma-1])^{\rm T} \otimes I_n \\ 
-k_{\gamma-1}[A_0,A_1,\dots,A_{\gamma-1}] + k_{\gamma}H([\gamma],\gamma)^{\rm T}\otimes A_{\gamma}
\end{bmatrix},
\quad
C_1 := \begin{bmatrix}
I_{(\gamma-1)n} & \\
 & k_{\gamma}A_{\gamma}
\end{bmatrix},$$
where $H([\gamma],[\gamma-1])$ denotes the submatrix consisting of the first $\gamma$ rows and first $\gamma-1$ columns of $H$, while $H([\gamma],\gamma)$ represents the $\gamma$-th column of $H$ as defined in \eqref{eq:H_def}. Then 
\begin{equation}\nonumber
    \left(C_0-xC_1\right)\begin{bmatrix}
        \vartheta_0(x)I_n \\ \vartheta_1(x)I_n \\ \vdots \\ \vartheta_{\gamma-1}(x)I_n
    \end{bmatrix}=\begin{bmatrix}
        \bzs \\ \bzs \\ \vdots \\ -k_{\gamma-1}P^*(x)
    \end{bmatrix},
\end{equation}
or equivalently,
\begin{equation}\label{eq:lin_important}
    \left(C_0-xC_1\right)\left(\pmb{\vartheta}(x)\otimes I_n\right)=-k_{\gamma-1}\left(\pmb{e}_{\gamma}\otimes P^*(x)\right),
\end{equation}
where \(\pmb{\vartheta}(x)=[\vartheta_0(x),\dots,\vartheta_{\gamma-1}(x)]^{\rm T}\).
\end{lemma}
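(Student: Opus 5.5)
The plan is a direct block-row verification, using only the recurrence \eqref{eq:recurrence_relation} and the relation \eqref{eq:coeffs_recurrence}. We compute $(C_0-xC_1)(\pmb{\vartheta}(x)\otimes I_n)$ separately for the first $\gamma-1$ block rows and for the last block row.

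\textbf{First $\gamma-1$ block rows.} For $j=1,\dots,\gamma-1$, the $j$th block row of $C_0$ is $H([\gamma],j)^{\rm T}\otimes I_n$, and that of $C_1$ is $\pmb{e}_j^{\rm T}\otimes I_n$. Applied to $\pmb{\vartheta}(x)\otimes I_n$, this block row gives
\[
\Big(\sum_{i=1}^{\gamma} h_{i,j}\vartheta_{i-1}(x)-x\vartheta_{j-1}(x)\Big)I_n .
\]
The $j$th column of \eqref{eq:recurrence_relation} reads $x\vartheta_{j-1}=\sum_{i=1}^{j+1}h_{i,j}\vartheta_{i-1}$. Since $H$ is upper Hessenberg and $j+1\le\gamma$, the sum only involves $\vartheta_0,\dots,\vartheta_{\gamma-1}$, i.e.\ exactly the first $\gamma$ rows. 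Hence each of these blocks vanishes.

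\textbf{Last block row.} Here we get
\[
-k_{\gamma-1}\sum_{j=0}^{\gamma-1}\vartheta_j A_j+k_\gamma\Big(\sum_{i=1}^{\gamma}h_{i,\gamma}\vartheta_{i-1}\Big)A_\gamma-x\,k_\gamma\vartheta_{\gamma-1}A_\gamma .
\]
The $\gamma$th column of the recurrence gives
\[
x\vartheta_{\gamma-1}-\sum_{i=1}^{\gamma}h_{i,\gamma}\vartheta_{i-1}=h_{\gamma+1,\gamma}\vartheta_\gamma .
\]
So the last two terms combine to $-k_\gamma h_{\gamma+1,\gamma}\vartheta_\gamma A_\gamma$. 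By \eqref{eq:coeffs_recurrence}, $k_\gamma h_{\gamma+1,\gamma}=k_{\gamma-1}$, so the whole block equals $-k_{\gamma-1}\sum_{j=0}^{\gamma}\vartheta_jA_j=-k_{\gamma-1}P^*(x)$, as claimed in \eqref{eq:lin_important}.

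\textbf{Points needing care.} The main obstacle is bookkeeping rather than ideas. We must check that the $(\gamma-1)$-column truncation $H([\gamma],[\gamma-1])$ loses no entries, which is exactly the Hessenberg structure noted above. We should also justify \eqref{eq:coeffs_recurrence} by comparing leading coefficients in $x\vartheta_{j-1}=\sum_i h_{i,j}\vartheta_{i-1}$: only $\vartheta_j$ has degree $j$, so $k_{j-1}=h_{j+1,j}k_j$. This uses irreducibility of $H$, which guarantees $k_j\neq0$. Finally, the Kronecker products must be read with the conventions of the statement, namely $H([\gamma],\gamma)^{\rm T}\otimes A_\gamma$ as a block row and $C_1$ block-diagonal.
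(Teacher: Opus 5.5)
Your proposal is correct and is essentially the paper's proof: a direct block-row computation in which the Hessenberg recurrence \eqref{eq:recurrence_relation} annihilates the first $\gamma-1$ blocks. The $\gamma$th column of that recurrence, together with $k_\gamma h_{\gamma+1,\gamma}=k_{\gamma-1}$, then collapses the last block to $-k_{\gamma-1}P^*(x)$. Your added justifications fill in details the paper leaves implicit: that the truncation $H([\gamma],[\gamma-1])$ loses no nonzero entries, and the leading-coefficient derivation of \eqref{eq:coeffs_recurrence} with $k_j\neq 0$ following from irreducibility.
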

\begin{proof}
    By  calculations, we obtain 
    \begin{equation}\label{eq:sss}
        \left(C_0-xC_1\right)\left(\pmb{\vartheta}\otimes I_n\right)=\begin{bmatrix}
            (x \vartheta_{0}-\sum\limits_{j=1}^{2}h_{j,1}\vartheta_{j-1})I_{n}\\ \vdots \\ (x \vartheta_{\gamma-2}-\sum\limits_{j=1}^{\gamma}h_{j,\gamma-1}\vartheta_{j-1})I_{n} \\ -k_{\gamma-1}\sum\limits_{j=0}^{\gamma-1}\vartheta_{j}A_{j}-k_{\gamma}\Big(x \vartheta_{\gamma-1}-\sum\limits_{j=1}^{\gamma}h_{j,\gamma}\vartheta_{j-1}\Big)A_{\gamma}
        \end{bmatrix}.
    \end{equation}
    With the recurrence relation \eqref{eq:recurrence_relation}, the first \(\gamma-1\) blocks in \eqref{eq:sss} are all 0. Finally, by \eqref{eq:recurrence_relation}   and \(h_{\gamma+1,\gamma}=\frac{k_{\gamma-1}}{k_{\gamma}}\), we get  $$-k_{\gamma-1}\sum\limits_{j=0}^{\gamma-1}\vartheta_{j}(x)A_{j}-k_{\gamma}\Big(x \vartheta_{\gamma-1}(x)-\sum\limits_{j=1}^{\gamma}h_{j,\gamma}\vartheta_{j-1}(x)\Big)A_{\gamma}=-k_{\gamma-1}P^*(x)$$
    as desired.
\end{proof}
The next result then reveals the relationship between the eigenvalues of \(P^*\) and linear matrix pencil \((C_0,C_1)\):
\begin{corollary}\label{prop:linearization}
    Every finite eigenvalue of \(P^*\) is also an eigenvalue of the matrix pencil \((C_0,C_1)\) defined in \eqref{eq:lin_important}.
\end{corollary}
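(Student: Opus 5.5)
Let $\lambda\in\mathbb{C}$ be a finite eigenvalue of $P^*$, with right eigenvector $\bu\ne\bzs$, so that $P^*(\lambda)\bu=\bzs$. The plan is to build an eigenvector of the pencil $(C_0,C_1)$ from $\bu$ by evaluating the identity \eqref{eq:lin_important} of Lemma~\ref{lem:linearization} at $x=\lambda$. Concretely, set
$$\bv:=\left(\pmb{\vartheta}(\lambda)\otimes I_n\right)\bu=\begin{bmatrix}\vartheta_0(\lambda)\bu\\ \vdots\\ \vartheta_{\gamma-1}(\lambda)\bu\end{bmatrix}\in\mathbb{C}^{\gamma n}.$$
Multiplying \eqref{eq:lin_important} on the right by $\bu$ gives
$$(C_0-\lambda C_1)\bv=-k_{\gamma-1}\left(\pmb{e}_\gamma\otimes P^*(\lambda)\bu\right)=\bzs .$$

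It remains to show that $\bv\neq\bzs$. By the normalization in Section~\ref{subsec:linearP}, $\vartheta_0(x)\equiv 1$, so the first block of $\bv$ is $\vartheta_0(\lambda)\bu=\bu\neq\bzs$. Hence $\bv$ is a nonzero vector with $(C_0-\lambda C_1)\bv=\bzs$.

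This shows that $C_0-\lambda C_1$ is singular, so $\lambda$ is an eigenvalue of the pencil $(C_0,C_1)$.

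There is one subtlety. If the pencil itself were singular, that is, $\det(C_0-xC_1)\equiv0$, then "eigenvalue" would have to be understood in a restricted sense. Under the standing assumption that $P^*$ is regular, the pencil is regular as well; this is verified separately in Section~\ref{subsec:stronglinear}. For the corollary as stated, exhibiting a nonzero kernel vector at $\lambda$ suffices.

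No step here is a genuine obstacle. The only point that needs care is the nonvanishing of $\bv$, and that is immediate from $\vartheta_0\equiv1$.
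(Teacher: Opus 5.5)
Your proof is correct and is essentially the paper's argument: you evaluate \eqref{eq:lin_important} at $x=\lambda$ and apply it to $\bu$, which gives $(C_0-\lambda C_1)(\pmb{\vartheta}(\lambda)\otimes\bu)=\bzs$. You also check explicitly that $\pmb{\vartheta}(\lambda)\otimes\bu\neq\bzs$ because $\vartheta_0\equiv1$; the paper leaves this step implicit, and it is exactly what the argument needs.
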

\begin{proof}
    Assume \((\lambda,\bu)\) is an eigenpair of \(P^*\), i.e. \(P^*(\lambda)\bu=0\). Using Lemma \ref{lem:linearization},   we  {obtain}
    \begin{equation}\label{eq:eigenvector_huifu}
        \left(C_0-\lambda C_1\right)\left(\pmb{\vartheta}(\lambda)\otimes \bu\right)=-k_{\gamma-1}\pmb{e}_{\gamma}\otimes \left(P^*(\lambda)\bu\right)=\bzs,
    \end{equation}
    leading to the desired result.
\end{proof}
\begin{remark}\label{rmk:nep_pencil_relation}
    Through the proof of Corollary \ref{prop:linearization}, we find that if \((\lambda,\bu)\) is the eigenpair of \(P^*\), then \((\lambda,\pmb{\vartheta}(\lambda)\otimes \bu)\) is the eigenpair of the matrix pencil \((C_0,C_1)\), where \(\pmb{\vartheta}(x)\) is defined in \eqref{eq:lin_important}.  {Conversely, let $(\lambda,\bv)$ be a finite eigenpair of
$(C_0,C_1)$, and partition
$\bv=[\bv_1^{\rm T},\ldots,\bv_\gamma^{\rm T}]^{\rm T}$.
The first $\gamma-1$ block equations, together with
$h_{j+1,j}\ne0$ and the recurrence relation
\eqref{eq:recurrence_relation}, imply
$
\bv_i=\vartheta_{i-1}(\lambda)\bv_1,
~ i=1,\ldots,\gamma.
$
In particular, $\bv_1\ne0$. Hence
$\bv=\pmb{\vartheta}(\lambda)\otimes\bv_1$, and
\eqref{eq:lin_important} yields
$P^*(\lambda)\bv_1=0$.
Therefore, $\lambda$ is an eigenvalue of $P^*$ and $\bv_1$
is an associated eigenvector.} Moreover,  \(\forall i=2,\dots,\gamma\), \(\pmb{v}_i\) can be obtained from
    \begin{equation}\label{eq:eigenvector_recover}
        \pmb{v}_i=\vartheta_{i-1}(\lambda)\pmb{v}_1,\ \ i=2,3,\dots,\gamma.
    \end{equation}
\end{remark}

\subsection{The linearization is strong}\label{subsec:stronglinear}
While Corollary \ref{prop:linearization} provides a method for computing eigenpairs of $P^*$, a critical question remains: whether $C_0-\lambda C_1$ defined in \eqref{eq:lin_important} constitutes a strong linearization \cite[Definition 2.1]{amcl:2009} of $P^*(\lambda)$. This property is particularly important because strong linearizations preserve the complete spectral information of the original matrix polynomial, including eigenvalue multiplicities \cite{lanc:2008}. Such preservation is essential for rigorous analysis of eigenvalue distributions, stability properties, and related problems. 
{In principle, the strong linearization property of $ P^*(\lambda) $ can be derived from the general treatment \cite{fasa:2017,nnt:2017} of the generalized ansatz space with an `ansatz vector' associated with \eqref{eq:lin_important}. However, in our specific case, since the particular linearization structure of $ P^*(\lambda) $ will be used to solve the GEP  associated with the pencil $(C_0, C_1) $, and since care must be taken regarding the singular or zero cases of the  monomial leading coefficient $P_{\gamma} $, we provide a self-contained verification of our linearization.}

For any polynomial $P(\lambda)$  having grade $\gamma$, we define its reverse polynomial as
$P^{\#}(\lambda) := \lambda^{\gamma} P\left( \lambda^{-1} \right).$
The concept of strong linearization is formally characterized as follows:
\begin{definition}{\rm ({grade-$\gamma$ formulation of} \cite[Definition 2.1]{amcl:2009})}
    A \(\gamma n \times \gamma n\) linear matrix pencil \(\lambda A-B\)\ is a strong linearization of \(n \times n\) regular matrix polynomial \(P(\lambda)\) of  {grade} \(\gamma\) if there are unimodular matrix polynomials\footnote{$D(x):\Omega\rightarrow \bbC^{n\times n}$ is a unimodular matrix-valued function  if   $\det (D(x))$ is a nonzero
constant over $\Omega$.} \(D(\lambda)\) and \(G(\lambda)\)\ such that 
\begin{equation}\label{eq:global analytic of p}
    \begin{bmatrix}P(\lambda)&\\ &I_{(\gamma-1)n}\end{bmatrix}=D(\lambda)(\lambda A-B)G(\lambda),
\end{equation}
and there are unimodular matrix polynomials \(H(\lambda)\) and \(K(\lambda)\) such that
\begin{equation}
    \begin{bmatrix}P^{\#}(\lambda)&\\ &I_{(\gamma-1)n}\end{bmatrix}=H(\lambda)(A-\lambda B)K(\lambda).
\end{equation}
\end{definition} 
If condition \eqref{eq:global analytic of p}  holds without any reference to spectral behavior at infinity (if any), then, the matrix pencil $\lambda A - B$ is referred to as a weak linearization \cite{amcl:2009} of the associated polynomial.

The following LU factorization is useful for solving the GEP $(C_0,C_1)$ and is also essential to prove that \(\lambda C_1-C_0\) defined in \eqref{eq:lin_important} is a strong linearization of \(P^*(\lambda)\).
\begin{lemma}\label{lem:lu}
    Let the pencil \((C_0,C_1)\) be given in \eqref{eq:lin_important}.
    \begin{itemize}
        \item [(i)] Define permutation matrix \(S\) as 
        \begin{equation}\label{eq:S_permutation}
            S:=\begin{bmatrix}
                &&&& I_n \\ I_n &&&& \\ & I_n &&& \\ && \ddots && \\ &&& I_n &
            \end{bmatrix}.
        \end{equation}
        Then we have
        \begin{equation}\label{eq:lu1}
            \left(\lambda C_1-C_0\right)S=L(\lambda)U(\lambda),
        \end{equation}
        where
        \begin{equation}\label{eq:lu1_U_def}
            U(\lambda)=\begin{bmatrix}
                I_n&&&-\frac{\vartheta_1(\lambda)}{\vartheta_0(\lambda)}I_n \\ & I_n && \vdots \\ && \ddots & -\frac{\vartheta_{\gamma-1}(\lambda)}{\vartheta_0(\lambda)}I_n \\ &&& I_n
            \end{bmatrix},
        \end{equation}
        \begin{equation}\label{eq:lu1_L_def}
            L(\lambda)=\begin{bmatrix}
                -h_{2,1}I_n&&&&&& \\ (\lambda-h_{2,2})I_n & -h_{3,2}I_n&&&&& \\ \vdots & \vdots & \ddots &&&& \\ -h_{2,\gamma-2}I_n & -h_{3,\gamma-2}I_n & \dots & -h_{\gamma-1,\gamma-2}I_n && \\ -h_{2,\gamma-1}I_n & -h_{3,\gamma-1}I_n & \dots &  (\lambda-h_{\gamma-1,\gamma-1})I_n & -h_{\gamma,\gamma-1}I_n&\\
                l_{1,\gamma}&l_{2,\gamma}&\dots&l_{\gamma-2,\gamma}&l_{\gamma-1,\gamma}&l_{\gamma,\gamma}
            \end{bmatrix},
        \end{equation}
        and
        \begin{equation}    \label{eq:lll_def}
            l_{i,\gamma}=\begin{cases}k_{\gamma -1}A_{i}-k_{\gamma}h_{i+1,\gamma}A_{\gamma},&i=1\colon (\gamma -2),\\ k_{\gamma -1}A_{\gamma -1}-k_{\gamma}h_{\gamma ,\gamma}A_{\gamma}+\lambda k_{\gamma}A_{\gamma},&i=\gamma -1,\\ \frac{1}{\prod_{i=1}^{\gamma -1} h_{i+1,i}} P^*\left( \lambda \right) ,&i=\gamma.\end{cases}
        \end{equation}
        \item [(ii)] \(\lambda C_1-C_0=L^{\#}(\lambda)U^{\#}(\lambda)\), where
        \begin{equation}\label{eq:lu2_U_def}
            U^{\#}(\lambda)=\begin{bmatrix}
                I_n& -\frac{\vartheta_0(\lambda)}{\vartheta_1(\lambda)}I_n&&\\& I_n & \ddots &&\\ &&& \\ && \ddots & -\frac{\vartheta_{\gamma-2}(\lambda)}{\vartheta_{\gamma-1}(\lambda)}I_n \\ &&& I_n
            \end{bmatrix},
        \end{equation}
        \begin{equation}\label{eq:lu2_L_def}
            L^{\#}(\lambda)=\begin{bmatrix}
                h_{2,1}\frac{\vartheta_1}{\vartheta_0}I_n &&&&&& \\ u_{1,2} & h_{3,2}\frac{\vartheta_2}{\vartheta_1}I_n&&&&& \\ u_{1,3} & u_{2,3} & h_{4,3}\frac{\vartheta_3}{\vartheta_2}I_n &&&& \\ \vdots & \vdots & \vdots & \ddots &&& \\ u_{1,\gamma-2}&u_{2,\gamma-2}&u_{3,\gamma-2}&\dots&h_{\gamma-1,\gamma-2}\frac{\vartheta_{\gamma-2}}{\vartheta_{\gamma-3}}I_n&& \\ u_{1,\gamma-1}&u_{2,\gamma-1}&u_{3,\gamma-1}&\dots&u_{\gamma-2,\gamma-1}&h_{\gamma,\gamma-1}\frac{\vartheta_{\gamma-1}}{\vartheta_{\gamma-2}}I_n&\\u_{1,\gamma}&u_{2,\gamma}&u_{3,\gamma}&\dots&u_{\gamma-2,\gamma}&u_{\gamma-1,\gamma}&u_{\gamma,\gamma}
            \end{bmatrix}
        \end{equation}
        with
        \begin{equation}\label{eq:uuu_def}
            \begin{cases}u_{1,j}=-h_{1,j}I_{n},&j=2,\dots,\gamma-1,\\ u_{i,\gamma}=k_{\gamma -1}A_{0}-k_{\gamma}h_{1,\gamma}A_{\gamma},&i=1,\\ u_{i+1,\gamma}=u_{i,\gamma}\frac{\vartheta_{i-1}}{\vartheta_{i}} +k_{\gamma -1}A_{i}-k_{\gamma}h_{i+1,\gamma}A_{\gamma},&i=1,\dots,\gamma -2,\\ u_{i,\gamma}=\frac{\vartheta_{0}}{\vartheta_{\gamma -1} \prod_{k=1}^{\gamma -1} h_{k+1,k}} P^*\left( \lambda \right) ,&i=\gamma ,\\ u_{i,j}=(-h_{i,j}-\sum_{k=0}^{i-2} h_{k+1,j}\frac{\vartheta_{k}}{\vartheta_{i-1}} )I_{n},&i=2,\dots,\gamma -2,\  j=i+1,\dots,\gamma -1.\end{cases}
        \end{equation}
    \end{itemize}
\end{lemma}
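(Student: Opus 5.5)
The plan is to verify both factorizations by direct block multiplication, using only the recurrence \eqref{eq:recurrence_relation}, the identity $h_{j+1,j}=k_{j-1}/k_j$ from \eqref{eq:coeffs_recurrence}, and Lemma \ref{lem:linearization}.

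\textbf{Part (i).} First I would write out $\lambda C_1-C_0$ explicitly. Its first $\gamma-1$ block rows form the banded Hessenberg-transpose pattern: row $j$ has $-h_{i,j}I_n$ in block column $i\le j$, $(\lambda-h_{j,j})I_n$ on the diagonal, and $-h_{j+1,j}I_n$ in block column $j+1$. The last block row has entries $k_{\gamma-1}A_{i-1}-k_\gamma h_{i,\gamma}A_\gamma$, with the extra $\lambda k_\gamma A_\gamma$ in the last block.

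Right-multiplying by $S$ cyclically shifts the block columns, so that the original first block column moves to the last position. The matrix $U(\lambda)$ is the identity plus a last block column. Hence the product $L(\lambda)U(\lambda)$ leaves the first $\gamma-1$ block columns equal to those of $L$. This requires that $L$'s first $\gamma-1$ block columns coincide with original columns $2,\dots,\gamma$ of $\lambda C_1-C_0$, which can be checked entrywise against \eqref{eq:lu1_L_def} and \eqref{eq:lll_def}.

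It then remains to check the last block column:
\[
L(\lambda)\,[-\vartheta_1/\vartheta_0,\dots,-\vartheta_{\gamma-1}/\vartheta_0,1]^{\rm T}\otimes I_n=\text{(first block column of }\lambda C_1-C_0).
\]
Since $\vartheta_0\equiv1$, this is equivalent to
\[
(\lambda C_1-C_0)(\pmb{\vartheta}(\lambda)\otimes I_n)=\pmb e_\gamma\otimes\Bigl(l_{\gamma,\gamma}-\text{(terms)}\Bigr).
\]
Lemma \ref{lem:linearization} gives this left-hand side as $k_{\gamma-1}\pmb e_\gamma\otimes P^*(\lambda)$. The first $\gamma-1$ rows then vanish by the recurrence, and the last row determines $l_{\gamma,\gamma}$.

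Finally, I would reconcile the constant $k_{\gamma-1}$ with the claimed $1/\prod h_{i+1,i}$ by telescoping $\prod_{i=1}^{\gamma-1}h_{i+1,i}=k_0/k_{\gamma-1}=1/k_{\gamma-1}$.

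\textbf{Part (ii).} Here $U^{\#}$ is unit upper bidiagonal, so I would verify $L^{\#}U^{\#}=\lambda C_1-C_0$ column by column. Block column $j$ of the product equals
\[
L^{\#}_{:,j}-\frac{\vartheta_{j-2}}{\vartheta_{j-1}}L^{\#}_{:,j-1}.
\]
Equivalently, it is cleaner to solve $L^{\#}=(\lambda C_1-C_0)(U^{\#})^{-1}$. The inverse $(U^{\#})^{-1}$ is unit upper triangular with $(i,j)$ block $\frac{\vartheta_{i-1}}{\vartheta_{j-1}}I_n$ for $i\le j$, since the products of consecutive ratios telescope. Then
\[
L^{\#}_{:,j}=\sum_{i\le j}\frac{\vartheta_{i-1}}{\vartheta_{j-1}}\,(\lambda C_1-C_0)_{:,i}.
\]

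I would check the upper part vanishes: for rows $r<j$ with $r\le\gamma-1$, the sum $\sum_{i\le j}\vartheta_{i-1}(\lambda C_1-C_0)_{r,i}$ is the full recurrence relation for row $r$, because row $r$ has no entries beyond column $r+1\le j$. It therefore equals zero, so $L^{\#}$ is lower triangular.

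The diagonal entries follow in the same way. Row $j$ in column $j$ has the partial sum of recurrence $j$ missing only the subdiagonal term, which leaves $h_{j+1,j}\vartheta_j/\vartheta_{j-1}$. The off-diagonal entries reproduce the $u_{i,j}$ formulas in \eqref{eq:uuu_def}. For the last block row, the $j=\gamma$ sum is again $(\lambda C_1-C_0)(\pmb\vartheta\otimes I_n)/\vartheta_{\gamma-1}$, which gives $u_{\gamma,\gamma}$ through Lemma \ref{lem:linearization} and the product identity. The intermediate $u_{i+1,\gamma}$ satisfy the stated recursion by splitting off the $i$-th term.

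\textbf{Main obstacle.} The main difficulty is the index bookkeeping in (ii), specifically matching the off-diagonal $u_{i,j}$ and the sign conventions exactly to \eqref{eq:uuu_def}. I would handle it via the explicit inverse of $U^{\#}$ as above, rather than by forward multiplication. The ratios $\vartheta_k/\vartheta_{i-1}$ are treated as rational functions, valid wherever the denominators are nonzero.
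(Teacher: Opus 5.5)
Your proposal is correct and follows the same route as the paper, which simply asserts that both factorizations follow by direct calculation from the recurrence \eqref{eq:recurrence_relation}, $h_{j+1,j}=k_{j-1}/k_j$ and $k_0=1$. Your use of Lemma~\ref{lem:linearization} for the last block column and last block row, together with the explicit inverse of $U^{\#}(\lambda)$, is a clean way of organizing exactly those omitted calculations (and in part (i) your ``(terms)'' are in fact absent: since the first $\gamma-1$ block columns of $L(\lambda)$ coincide entirely with block columns $2,\dots,\gamma$ of $\lambda C_1-C_0$, the condition reduces exactly to $(\lambda C_1-C_0)(\pmb{\vartheta}(\lambda)\otimes I_n)=\pmb{e}_\gamma\otimes l_{\gamma,\gamma}$).
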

\begin{proof}
    With the recurrence relation \eqref{eq:recurrence_relation}, \(h_{j+1,j}=\frac{k_{j-1}}{k_{j}} \ \forall j=1,2,\dots,\gamma\) and \(k_0=1\), all the results can be obtained by   calculations. Details are omitted here.
\end{proof}
With the aid of  Lemma \ref{lem:lu}, we know that  \(\lambda C_{1}-C_{0}\) is a strong linearization of \(P^*(\lambda)\). 
\begin{proposition}\label{prop:stronglinear}
{Assume that $P^*(\lambda)$ in \eqref{eq:P_T_def} is regular and is regarded as a matrix polynomial of grade $\gamma$.} The pencil \((C_0,C_1)\) in \eqref{eq:lin_important} is a strong linearization of \(P^*(\lambda)\) in \eqref{eq:P_T_def}.
\end{proposition}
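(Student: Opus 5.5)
The proof splits into two parts: the finite part (weak linearization) and the part at infinity (the reversal condition).

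\emph{Finite part.} I will use Lemma~\ref{lem:lu}(i). Since $\vartheta_0\equiv 1$, the factor $U(\lambda)$ in \eqref{eq:lu1_U_def} is a block upper triangular matrix polynomial with identity diagonal blocks, so $\det U(\lambda)\equiv1$ and $U$ is unimodular. The permutation $S$ is a constant invertible matrix. The factor $L(\lambda)$ is block lower triangular. Its first $\gamma-1$ diagonal blocks are the nonzero constants $-h_{j+1,j}I_n$, nonzero because $H$ is irreducible. Its last diagonal block is $c\,P^*(\lambda)$ with $c=1/\prod_{i=1}^{\gamma-1}h_{i+1,i}\neq0$.

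Reduce $L(\lambda)$ by block elimination. Left-multiply by the constant matrix ${\rm diag}(-h_{2,1}^{-1}I_n,\dots,-h_{\gamma,\gamma-1}^{-1}I_n,c^{-1}I_n)$. Then clear the strictly lower off-diagonal blocks column by column, using row operations from the normalized identity blocks above. All multipliers are polynomial in $\lambda$ (the entries $l_{i,\gamma}$ are linear), so the row operations form a unimodular $D_1(\lambda)$. The result is
\[
D_1(\lambda)L(\lambda)={\rm diag}(I_{(\gamma-1)n},P^*(\lambda)).
\]
A constant block permutation $\Pi$ moves $P^*$ to the $(1,1)$ position. This gives
\[
\begin{bmatrix}P^*(\lambda)&\\&I_{(\gamma-1)n}\end{bmatrix}=D(\lambda)(\lambda C_1-C_0)G(\lambda),
\]
with $D=\Pi D_1$ and $G=SU(\lambda)^{-1}\Pi^{\rm T}$, both unimodular. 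This is \eqref{eq:global analytic of p} with $A=C_1$, $B=C_0$.

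\emph{Infinite part.} I need unimodular $H(\lambda)$ and $K(\lambda)$ with ${\rm diag}(P^{*\#}(\lambda),I)=H(\lambda)(C_1-\lambda C_0)K(\lambda)$. Note that $C_1-\lambda C_0=\lambda\,(\mu C_1-C_0)$ with $\mu=\lambda^{-1}$. My first attempt is to take Lemma~\ref{lem:lu}(ii) at $\mu=1/\lambda$ and multiply through by suitable powers of $\lambda$. The subdiagonal ratios $\vartheta_{i-1}(\mu)/\vartheta_i(\mu)$ behave like $\lambda k_{i-1}/k_i$ near $\lambda=0$. After scaling the $i$th block column by $\lambda^{i-1}\vartheta_{i-1}$-type factors, I expect polynomial triangular factors whose diagonal blocks are nonzero constants, plus a last block proportional to $\lambda^{\gamma}P^*(1/\lambda)=P^{*\#}(\lambda)$.

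If that bookkeeping becomes unmanageable, the alternative is a direct permuted reduction of $C_1-\lambda C_0$ in the reversed variable. The first $\gamma-1$ block rows are
\[
\lambda\cdot\bigl(\text{constant Hessenberg rows}\bigr)-\bigl[\,I\ \ 0\,\bigr].
\]
Use the identity-type blocks $I_n$ (now coming from $C_1$) as pivots, and eliminate with the reversed basis vector
\[
\bigl[\lambda^{\gamma-1}\vartheta_0(1/\lambda),\dots,\lambda^{0}\vartheta_{\gamma-1}(1/\lambda)\bigr],
\]
whose entries are polynomials in $\lambda$ because $\deg\vartheta_j\le j$. This mirrors Lemma~\ref{lem:linearization}. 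The block in $C_1$ is $k_\gamma A_\gamma$, which may be singular or even zero, so it must never be used as a pivot. The pivots must come only from the $I_{(\gamma-1)n}$ blocks, with $A_\gamma$ appearing solely inside the final Schur-complement block, identified as $-k_{\gamma-1}P^{*\#}(\lambda)$ times a nonzero constant.

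\emph{Main obstacle.} The hard step is this reversal computation: checking that every eliminating multiplier is a genuine polynomial in $\lambda$, which requires $\deg\vartheta_j\le j$ and $h_{j+1,j}\neq0$, and that the resulting Schur complement is exactly a nonzero multiple of $P^{*\#}$ regardless of whether $A_\gamma$ is singular. Regularity of $P^*$ is used only to ensure the hypotheses of the definition (a regular $P^*$ and hence a regular pencil).
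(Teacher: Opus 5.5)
Your finite part is correct and is the paper's argument. The paper writes $L(\lambda)=\widetilde L(\lambda)\,{\rm diag}(I_{(\gamma-1)n},P^*(\lambda))$, where $\widetilde L$ replaces the last diagonal block of $L$ by $I_n/\prod_i h_{i+1,i}$. Your elimination matrix $D_1$ is just $\widetilde L(\lambda)^{-1}$, and the extra block permutation $\Pi$ is harmless.

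\textbf{The gap is the reversal condition.} You only sketch it, and both sketches aim at \emph{polynomial} unimodular $H,K$ by means that cannot deliver them.

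Route (a) expects a block triangular factorization of $C_1-\lambda C_0$, in the natural block order, with polynomial factors and constant diagonal blocks. That is impossible in general. For $j\le\gamma-1$ the $j$th leading principal block of $C_1-\lambda C_0$ is $(I_j-\lambda H_j^{\rm T})\otimes I_n$ with $H_j=H([j],[j])$, and $\det(I_j-\lambda H_j)=\bigl(\prod_{i\le j}h_{i+1,i}\bigr)\lambda^{j}\vartheta_j(\lambda^{-1})$. This is a nonconstant polynomial as soon as $\vartheta_j$ has a nonzero root. With unimodular block triangular factors, every such leading minor would have to be constant. Your expectation extrapolates from the monomial basis, where $\lambda^j\vartheta_j(\lambda^{-1})\equiv1$. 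The same obstruction appears in Lemma~\ref{lem:lu}(ii) at $\lambda^{-1}$. There the diagonal blocks $\lambda h_{j+1,j}\vartheta_j(\lambda^{-1})/\vartheta_{j-1}(\lambda^{-1})$ and the entries $-\vartheta_{j-1}(\lambda^{-1})/\vartheta_j(\lambda^{-1})$ are rational, with poles at reciprocals of nonzero zeros of the $\vartheta_j$. Rescaling by powers of $\lambda$ or by $\vartheta$-type factors destroys unimodularity.

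Route (b) fails for a similar reason. The diagonal blocks of the first $\gamma-1$ block rows of $C_1-\lambda C_0$ are $(1-\lambda h_{j,j})I_n$, not $I_n$. These are not units of $\mathbb{C}[\lambda]$, so the multipliers are analytic only near $\lambda=0$. In addition, your vector $[\lambda^{\gamma-1}\vartheta_0(\lambda^{-1}),\dots,\lambda^{0}\vartheta_{\gamma-1}(\lambda^{-1})]$ is not polynomial, since its last entry is a polynomial in $\lambda^{-1}$. It is also not a kernel vector of the first $\gamma-1$ block rows. The correct polynomial kernel vector is $\lambda^{\gamma-1}\pmb{\vartheta}(\lambda^{-1})$.

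\textbf{The missing idea is that no global construction is needed at infinity.} The paper invokes the local criterion of Theorem~\ref{thm:strong_reference}. Given the global finite equivalence, it suffices to find $H,K$ that are analytic and invertible in a neighbourhood of $\lambda=0$. With that criterion, route (a) closes without any rescaling:
\begin{itemize}
\item Set $L_1(\lambda)=\lambda L^{\#}(\lambda^{-1})$ and $U_1(\lambda)=U^{\#}(\lambda^{-1})$; then $C_1-\lambda C_0=L_1U_1$.
\item Since $\vartheta_{j-1}(\lambda^{-1})/\vartheta_j(\lambda^{-1})=h_{j+1,j}\lambda+O(\lambda^2)$, $U_1$ is analytic at $0$ with $U_1(0)=I$.
\item The last diagonal block of $L_1$ equals $\alpha(\lambda)P^{\#}(\lambda)$ with $\alpha(0)=1$.
\item Replacing that block by $\alpha(\lambda)I_n$ gives $\widetilde L_1$ with $L_1=\widetilde L_1\,{\rm diag}(I,P^{\#})$, and $\det\widetilde L_1\equiv1$ because the diagonal blocks telescope.
\item Hence $H=\widetilde L_1^{-1}$ and $K=U_1^{-1}$ work.
\end{itemize}

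Route (b) can also be salvaged locally. Near $0$ the leading block is invertible, and the Schur complement is $k_{\gamma-1}P^{\#}(\lambda)/\bigl(\lambda^{\gamma-1}\vartheta_{\gamma-1}(\lambda^{-1})\bigr)$. That is an analytic, nonvanishing multiple of $P^{\#}$, not a constant multiple as you claimed.

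If you really want global polynomial $H,K$, you need a different device. For example, the scalar pencil $[I_{\gamma-1}\ 0]-\lambda H([\gamma],[\gamma-1])^{\rm T}$ has full row rank for every $\lambda\in\mathbb{C}$ because $h_{j+1,j}\neq0$, so it can be completed to a unimodular matrix.
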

\begin{proof}
See Appendix. 
\end{proof}

\subsection{The framework of {\sf NEP\_MiniMax}  for NEP \eqref{eq:nep}}\label{subsec:algframework}
Using {\sf m-d-Lawson} and linearization techniques, we now describe the framework of our proposed {\sf NEP\_MiniMax} (Algorithm \ref{alg:linear_mdlawson}) for solving NEP \eqref{eq:nep}. Letting $\bn=[n_1,\dots,n_s]^{\T}$, we denote by $${\boldsymbol{\xi}}^*=\text{\sf m-d-Lawson}(\bt,{\cal X},\bn,d)$$ the process of calling $\text{\sf m-d-Lawson}$ to have   {an approximately best} rational approximation  of $\bt(x)$ over the sample set ${\cal X} = \{x_{\ell}\}_{\ell=1}^m$, where each pair $(p_i^*(x), q^*(x))$ has type $(n_i, d)$. For simplicity, we set $n_i = d$ for rational approximation and $n_i = k$ (with $d = 0$) for polynomial approximation. During the approximation stage (lines 2--5 of Algorithm \ref{alg:linear_mdlawson}), the degree is iteratively increased until $\sqrt{e({\boldsymbol{\xi}}^*)}$ satisfies $\sqrt{e({\boldsymbol{\xi}}^*)} \le \epsilon_{{\boldsymbol{\xi}^*}}^{\cal X}$. For eigenvalue computation, small-scale matrix pencils $(C_0, C_1)$ are resolved using MATLAB's {\sf eig} function, whereas large-scale problems necessitate specialized methods \cite{chli:2025,saem:2020,tasa:2024} to mitigate computational demands. The {Kronecker-product} structure of $(C_0, C_1)$ \eqref{eq:lin_important} in our framework facilitates efficient eigenvalue computation, as elaborated in next section.

\begin{algorithm}[h!!!]
\caption{The framework of {\sf NEP\_MiniMax} for \eqref{eq:nep}} \label{alg:linear_mdlawson}
\begin{algorithmic}[1]
\renewcommand{\algorithmicrequire}{\textbf{Input:}}
\renewcommand{\algorithmicensure}{\textbf{Output:}}
\REQUIRE Given a matrix-valued function \(T(x)={\sum}_{i=1}^st_i(x)E_i\) with sample \({\cal X}=\{x_{\ell}\}_{\ell=1}^m \subset   \bar\Omega\), a tolerance  \(\epsilon_{{\boldsymbol{\xi}}^*}^{\cal X}>0\), the maximum degree \(d_{\max}\) for approximation.
\ENSURE The computed eigenpairs \((\lambda^*,\pmb{u}^*)\) of \(T(x)\).
\smallskip
\STATE Let \(\bt(x)=[t_1(x),\dots,t_s(x)]^{\rm T}\);
\STATE \textbf{for} $k$ = 1 {\sf to} \(d_{\max}\) \textbf{do}
\STATE \ \ \ \ \ \({\boldsymbol{\xi}}^*=\) {\sf m-d-Lawson}\((\bt,{\cal X}, k \be, k)\);
\STATE \ \ \ \ \ Stop if \(\sqrt{e({\boldsymbol{\xi}}^*)}\le \epsilon_{{\boldsymbol{\xi}}^*}^{\cal X}\);
\STATE \textbf{end for}
\STATE Let \(R^*(x)=\frac{1}{q^*(x)}{\sum}_{i=1}^{s}p^*_i(x)E_i\) and construct pencil \((C_0,C_1)\) according to \eqref{eq:lin_important};
\STATE Compute \((C_0,C_1)\)'s eigenpairs \((\lambda^*,\bv^*)\), then \(\pmb{u}^*=\bv^*(1:n,:)\).
\end{algorithmic}
\end{algorithm}

\section{Solving the resulting large-scale matrix pencils}\label{sec:largescale}
As previously discussed, the eigenvalue problem for an $n \times n$  {regular} matrix polynomial $P^*(\lambda)$ of  {grade} $\gamma$ can be reformulated as a  GEP  $(C_0,C_1)${---}defined in \eqref{eq:lin_important}{---}with dimension $\gamma n \times \gamma n$. For small-to-medium-scale problems (where $\gamma n$ is computationally tractable),  one can directly apply the QZ algorithm to $(C_0,C_1)$ to compute all eigenvalues. For large-scale problems, however, memory-efficient subspace iteration methods \cite{saem:2020,tasa:2024} or rational filtering techniques \cite{chli:2025}{---}the latter defined on a target region $\Omega${---}become necessary alternatives. Additionally, when the matrix pencil exhibits structured forms, more specialized approaches are available. In particular,  \cite{bemm:2015} demonstrates that for pencils with Kronecker product structure, the compact rational Krylov method (CORK) provides an effective framework for eigenvalue computation.


\subsection{Rational filter for matrix pencil}\label{subsec:rationafilter}
Recall that $\Omega \subset \mathbb{C}$ is a Jordan domain with boundary $\Gamma = \partial\Omega$ consisting of a simple Jordan curve. The indicator function of $\Omega$ can be expressed through contour integration \cite{gupt:2015,yicy:2017} as
\begin{equation}\label{eq:filter_def1}
    \rho(x) = \frac{1}{2\pi\mathrm{i}}\oint_{\Gamma}\frac{1}{z-x}\mathrm{d}z = 
    \begin{cases}
        1, & x \in \Omega, \\ 
        0, & x \not \in {\bar\Omega},
    \end{cases}
\end{equation}
which naturally defines a rational filter for $\Omega$.  The filter $\rho(x)$ enables selective extraction of eigenvalues within $\Omega$. Indeed,  for any diagonalizable matrix pencil $(A,B)$ satisfying $AX = BX\Lambda$, where $\Lambda$ contains the eigenvalues on its diagonal, the filtered matrix becomes \cite{yicy:2017}
\[
\rho(B^{-1}A) = X \mathbf{1}_{\Omega}(\Lambda) X^{-1},
\]
with $\mathbf{1}_{\Omega}(\cdot)$ denoting the eigenvalue indicator function. Crucially, $\rho(B^{-1}A)$ projects onto $\operatorname{span}\{\boldsymbol{u}_1,\dots,\boldsymbol{u}_k\}$, where $\{\boldsymbol{u}_i\}_{i=1}^k$ are eigenvectors corresponding to eigenvalues of $(A,B)$ lying within $\Omega$. Remarkably, this contour integral approach remains valid even for defective non-Hermitian pencils \cite{yicy:2017} and yields the method HFEAST \cite{ying:2019}. 
These results underlie the {\sf SIF} (Subspace Iteration with Filter) method (Algorithm~\ref{alg:si_filter}) for computing eigenvalues of large-scale pencils $(C_0,C_1)$. 

\begin{algorithm}[h!!!]
\caption{Subspace iteration with filter \cite{chli:2025,ying:2019}} \label{alg:si_filter}
\begin{algorithmic}[1]
\renewcommand{\algorithmicrequire}{\textbf{Input:}}
\renewcommand{\algorithmicensure}{\textbf{Output:}}
\REQUIRE A matrix pencil \((C_0,C_1)\),  \(\Omega\), number of columns \(\tilde{n}\), shift \(\sigma\).
\ENSURE All approximate eigenpairs \((\tilde{\lambda}_i,\tilde{\pmb{y}}_i)\), \(\tilde{\lambda}_i \in \Omega\).
\smallskip
\STATE Generate random \(Y \in \mathbb{C}^{n\gamma \times \tilde{n}}\);
\STATE \textbf{while}  {not converged} \textbf{do}
\STATE \ \ \ \ \ \(U=\rho(C_1^{-1}C_0)Y\);
\STATE \ \ \ \ \ \(V={\rm orth}(U)\);
\STATE \ \ \ \ \ \(W={\rm orth}((C_0-\sigma C_1)V)\);
\STATE \ \ \ \ \ \(\{(\tilde{\lambda}_i,\tilde{\pmb{x}}_i)\}_{i=1}^{\tilde{n}}={\rm eig}(W^{\rm H}C_0V,W^{\rm H}C_1V)\);
\STATE \ \ \ \ \ \(\tilde{\pmb{y}}_i=V\tilde{\pmb{x}}_i\), and \(Y=[\tilde{\pmb{y}}_1,\dots,\tilde{\pmb{y}}_{\tilde{n}}]\);
\STATE \textbf{end while}
\end{algorithmic}
\end{algorithm}

\begin{remark}  {The following remarks explain} Algorithm \ref{alg:si_filter}:
    \item [(1)] In Line 4,  {$V = \mathrm{orth}(U)$} denotes an orthonormal basis for the range of  {$U$}, typically computed via thin QR decomposition of  {$U$}; the same applies for Line 5. The left eigenspace basis $W$ is constructed as $W = \mathrm{orth}(C_0V - \sigma C_1V)$, where $\sigma$ is a shift parameter chosen to avoid the eigenvalues of $(C_0,C_1)$. For implementation details, we refer to the HFEAST algorithm \cite{ying:2019}.
    \item [(2)] The eigenpairs \((\tilde{\lambda}_i, \tilde{\bm{x}}_i)\) of the reduced \(\tilde{n}\times \tilde{n}\) generalized eigenvalue problem (Line~6) can be computed using MATLAB's \textsf{eig}, as \(\tilde{n}\) is typically small enough for dense direct methods.
    \item [(3)] Under exact arithmetic conditions, a single iteration of the rational filter $\rho(C_1^{-1}C_0)$ applied to $Y$ would yield a subspace $V$ that exactly spans the desired invariant subspace. In such idealized scenarios, the computed Ritz pairs $(\tilde{\lambda}_i, \tilde{\bm{y}}_i)$ from the reduced problem precisely coincide with eigenpairs of the original pencil $(C_0,C_1)$.
    \end{remark}

We emphasize that the filter $\rho(x)$ defined in \eqref{eq:filter_def1} involves a contour integral and thus requires numerical approximation in practice. For the special case when $\Gamma$ is a circular contour with center $c$ and radius $r$, we parameterize it using $c+re^{i\theta}$ ($\theta\in[0,2\pi)$), reducing \eqref{eq:filter_def1} to a one-dimensional integral amenable to standard quadrature methods. Following the approach in \cite{chli:2025}, we employ the $k$-point trapezoidal rule, which yields the discrete approximation
\begin{equation}\label{eq:kth_trap_qua}
    \zeta_{c,r,k}(x)=\sum\limits_{j=1}^{k}\frac{g^{(k)}_j}{s^{(k)}_j-x},
\end{equation}
where the poles $s_j^{(k)}$ and weights $g_j^{(k)}$ are given by
\begin{equation}\label{eq:pole_weight}
 s^{(k)}_j=c+r{ e}^{{\rm i}\theta^{(k)}_j},\ \ g^{(k)}_j=\frac{r}{k}{ e}^{{\rm i}\theta^{(k)}_j},\ \ \theta^{(k)}_j=\frac{(2j-1) \pi}{k}.
 \end{equation}
Consequently, the action of the filter on $Y$ is approximated as
\begin{equation}\label{eq:filter_matrix_crk}
\rho(C_1^{-1}C_0)Y \approx \zeta_{c,r,k}(C_1^{-1}C_0)Y = \sum_{j=1}^k g_j^{(k)} \left(s_j^{(k)} C_1 - C_0 \right)^{-1} C_1 Y.
\end{equation}
For rigorous error analysis, convergence rates, and generalizations to composite quadrature rules, we refer readers to \cite{chli:2025}.

\subsection{Exploiting block structure in the shift-invert systems}\label{subsec:shiftinv}
The solution of \eqref{eq:filter_matrix_crk} involves solving $k$ linear systems of dimension $\gamma n \times \gamma n$, which inherently depends on shift-invert iteration. Specifically, for a given shift $\mu$ and initial matrix $Y \in \mathbb{C}^{{\gamma n}\times \tilde{n}}$, each shift-invert step computes $Z$ via the linear system:
\begin{equation}\label{eq:sii_def}
(\mu C_1 - C_0)Z = C_1Y.
\end{equation}
While a standard implementation requires solving a full $\gamma n \times \gamma n$ system per iteration, our framework leverages the structure of the pencil $(C_0,C_1)${---}particularly the block LU decomposition in \eqref{eq:lu1}{---}to reduce the computational cost to solving \eqref{eq:sii_def}.

By   the LU factorization from Lemma \ref{lem:lu}(i), \eqref{eq:sii_def} becomes equivalent to
\begin{equation}\nonumber
\left(L(\mu)U(\mu)\right)S^{\rm T}Z = C_1 Y,
\end{equation}
where $L(\mu)$ and $U(\mu)$ are derived by substituting $\lambda = \mu$ into $L(\lambda)$ in \eqref{eq:lu1_L_def}  and $U(\lambda)$ in \eqref{eq:lu1_U_def}, respectively. Defining $X := U(\mu)S^{\rm T}Z$, we solve for $Z$ via
\begin{equation}\nonumber
S^{\rm T}Z = U^{-1}(\mu)X, \
L(\mu)X = C_1 Y.
\end{equation}
Partition $X = [X_{1}^{\rm T}, \dots, X_{{\gamma}}^{\rm T}]^{\rm T}$ and $Y = [Y_{1}^{\rm T}, \dots, Y_{{\gamma}}^{\rm T}]^{\rm T}$, with $X_{i}, Y_{i} \in \mathbb{C}^{n \times \tilde{n}}$. Using the structures of $C_1$, $U(\mu)$, and $L(\mu)$, \eqref{eq:lin_important}, \eqref{eq:lu1_U_def}, \eqref{eq:lu1_L_def} and noting $\vartheta_0 \equiv 1$, direct computation yields 
\begin{equation}\label{eq:wk1_iterate}
    S^{\rm T}Z=\begin{bmatrix}
        X_{{1}}+\vartheta_1(\mu)X_{{\gamma}} \\ \vdots \\ X_{{\gamma-1}}+\vartheta_{\gamma-1}(\mu)X_{{\gamma}} \\ X_{{\gamma}} 
    \end{bmatrix},
\end{equation}
while $X$ is resolved through
\begin{equation}\label{eq:w_hat_solve}
    \begin{cases}
        & X_{1}=-Y_{1}/h_{2,1},\ \ X_{{2}}=-\left(Y_2-\left(\mu-h_{2,2}\right)X_{1}\right)/h_{3,2}, \\
        & X_{i}=-\left(Y_{{i}}-\left(\mu-h_{i,i}\right)X_{{i-1}}+\sum\limits_{j=1}^{i-2}h_{j+1,i}X_{j}\right)/h_{i+1,i}, \ \ i=3,\dots,\gamma-1, \\
        & P^*(\mu)\ X_{{\gamma}}=\left(\Pi_{i=1}^{\gamma-1}h_{i+1,i}\right)\left(k_{\gamma}A_{\gamma}Y_{{\gamma}}-\sum\limits_{i=1}^{\gamma-1}l_{i,\gamma}X_{i}\right),
    \end{cases}
\end{equation}
where $\{l_{i,\gamma}\}_{i=1}^{\gamma-1} \subset \mathbb{C}^{n \times n}$ is defined in \eqref{eq:lll_def}. Crucially, the dominant cost in computing $Z$ in \eqref{eq:sii_def} reduces to solving $n$-dimensional systems for $X_{{\gamma}}$.

By combining trapezoidal quadrature and block LU decomposition, we develop an efficient method for computing $U = \rho(C_1^{-1}C_0)Y$ (Line 3, Algorithm \ref{alg:si_filter}), as outlined in Algorithm \ref{alg:rho_plus_Y}. Notably, Algorithm \ref{alg:si_filter} maintains a fixed quadrature order $k$ across iterations. This choice ensures that the set of linear systems $P^*(s^{(k)}_j)$ ($j = 1, \dots, k$) remains invariant, allowing their pre-factorization prior to subspace iteration{---}significantly accelerating linear system solves within each iteration.
Furthermore, Line 3 of Algorithm \ref{alg:rho_plus_Y} reveals that the computations of $G_j$ ($j= 1, \dots, k$) are fully independent. This property makes the approximation
$$\rho(C_1^{-1}C_0)Y \approx \sum_{j=1}^k g^{(k)}_j \left(s^{(k)}_j C_1 - C_0\right)^{-1} C_1 Y  $$
inherently parallelizable, enabling efficient exploitation of parallel computing resources.

\begin{algorithm}[h!!!]
\caption{A practical method for computing \(\rho(C_1^{-1}C_0)Y\)} \label{alg:rho_plus_Y}
\begin{algorithmic}[1]
\renewcommand{\algorithmicrequire}{\textbf{Input:}}
\renewcommand{\algorithmicensure}{\textbf{Output:}}
\REQUIRE The order \(k\) of the trapezoidal rule.
\ENSURE \(\zeta_{c,r,k}(C_1^{-1}C_0)Y=\sum\limits_{j=1}^{k}g^{(k)}_j\left(s^{(k)}_jC_1-C_0\right)^{-1}C_1Y \approx \rho(C_1^{-1}C_0)Y\).
\smallskip
\STATE Generate \(g^{(k)}_j\) and \( s^{(k)}_j,\ j=1,\dots,k\) based on \eqref{eq:pole_weight};
\STATE \textbf{for} \(j=1\) to \(k\) \textbf{do}
\STATE \ \ \ \ \ solve \(G_j=S^{\rm T}Z\) with \(\mu=s^{(k)}_j\) according to \eqref{eq:wk1_iterate} and \eqref{eq:w_hat_solve};
\STATE \textbf{end for}
\STATE \(\zeta_{c,r,k}(C_1^{-1}C_0)Y=S\sum\limits_{j=1}^kg^{(k)}_jG_j\).
\end{algorithmic}
\end{algorithm}

\subsection{The CORK framework}\label{subsec:cork}
Another useful approach for  {solving the} large-scale matrix pair $(C_0, C_1)$ in \eqref{eq:lin_important} is   the compact rational Krylov method (CORK) \cite{bemm:2015}. This approach centers on a matrix-valued function of the form
\begin{equation}\label{eq:cork_function}
\widetilde{P}(x) = \sum_{i=0}^{\gamma-1} \left(\widetilde{A}_i -x \widetilde{B}_i\right) \varphi_i(x), \quad \widetilde{A}_i, \widetilde{B}_i \in \mathbb{C}^{n \times n},
\end{equation}
where $\varphi_i : \mathbb{C} \to \mathbb{C}$ are polynomial or rational functions satisfying the linear relation
\begin{equation}
\left(M - xN\right) \varphi(x) = \bzs,
\end{equation}
with $\text{rank}(M - xN) = \gamma - 1$ for all $x \in \mathbb{C}$ and $\varphi(x) = [\varphi_0(x), \dots, \varphi_{\gamma-1}(x)]^\text{T} \neq \bzs$.
The CORK method computes the eigenvalues of the CORK linearization of $\widetilde{P}(x)$, given by
\begin{equation}\label{eq:cork_pencil}
    \mathscr{L}_{\widetilde{P}}(x)=\begin{bmatrix}
        \widetilde{A}_0-x\widetilde{B}_0&\cdots&\widetilde{A}_{\gamma-1}-x\widetilde{B}_{\gamma-1}\\\hline\\&\left(M-xN\right) \otimes I_n&
    \end{bmatrix},
\end{equation}
using a rational Krylov method that fully exploits its Kronecker structure. For detailed implementation strategies and eigenpair computations of \eqref{eq:cork_pencil}, we refer to \cite{bemm:2015}.

In our setting, by dividing the first block row of $(C_0,C_1)$ \eqref{eq:lin_important} by $-k_{\gamma-1}$ and applying the relation $h_{i+1,i} = \frac{k_{i-1}}{k_i}$, we obtain an equivalent representation of $P^*$ consistent with \eqref{eq:cork_function}. Specifically, the coefficients take the form:
\begin{equation}\label{eq:va_cork_coeffs_A}
    \widetilde{A}_i=A_{i}-\frac{h_{i+1,\gamma}}{h_{\gamma+1,\gamma}}A_{\gamma},\ \ i=0,1,\dots,\gamma-1,
\end{equation}
\begin{equation}\label{eq:va_cork_coeffs_B}
    \widetilde{B}_i=\begin{cases}
        0_{n \times n}, \ \ \ \ \ \ \ \ &i=0,1,\dots,\gamma-2, \\ -\frac{1}{h_{\gamma+1,\gamma}}A_{\gamma}, \ \ &i=\gamma-1.
    \end{cases}
\end{equation}
Furthermore, relying upon the recurrence relation \eqref{eq:recurrence_relation}, we derive 
\begin{equation}
    \left(H([\gamma],[\gamma-1])^{\rm T}-x\begin{bmatrix}
    I_{\gamma-1}&0_{(\gamma-1)\times 1}
    \end{bmatrix}\right)\begin{bmatrix}
        \vartheta_0(x) \\ \vdots \\ \vartheta_{\gamma-1}(x)
    \end{bmatrix} = \bzs.
\end{equation} 

\section{Numerical experiments}\label{sec:numerical}
In this section, we assess the numerical performance of our proposed algorithm through MATLAB R2024a implementations in double precision arithmetic. All experiments are conducted on a 15.3-inch MacBook Air equipped with an M2 chip and 8GB of memory. We test the method using some benchmark problems from \cite{guti:2017,hade:1967,saem:2020,tasa:2024,vafe:2014}.

To evaluate the effectiveness of our approach, we compare the computed eigenvalues with those obtained by several state-of-the-art methods:
\begin{itemize}
\item 
Beyn's method \cite{beyn:2012} (using the implementation from \cite[Figure 5.3]{guti:2017}),
\item 
RSI \cite{tasa:2024} (code available at \url{https://github.com/tang0389/RSI_NLEVP}),

\item Set-valued AAA \cite{limp:2022}, and
\item 
NLEIGS \cite{guvr:2014} (code available at \url{http://twr.cs.kuleuven.be/research/software/nleps/nleigs.html}).
\end{itemize}
To further demonstrate the advantages of rational filtering and CORK (implementation at \url{http://twr.cs.kuleuven.be/research/software/nleps/cork.html}), we include three large-scale NEPs. These test cases allow us to compare both accuracy and computational efficiency when computing eigenpairs using our approach versus MATLAB's built-in {\sf eig} function.

For clarity, we introduce the following nomenclature for our proposed algorithms.

\begin{description}
\item[\textsf{NEP\_MiniMax}:] It computes eigenpairs by 
\begin{enumerate}
\item obtaining a rational minimax  approximant  by \textsf{m-d-Lawson}   (Algorithm \ref{alg:Lawson}) and
\item solving the resulting matrix pencil eigenvalue problem using MATLAB's built-in \textsf{eig} function.
\end{enumerate}

\item[\textsf{NEP\_MiniMax-FILTER}:] It computes eigenpairs by 
\begin{enumerate}
\item constructing a rational minimax  approximant  via \textsf{m-d-Lawson} and
\item performing subspace iteration with rational filtering (Algorithm~\ref{alg:si_filter}) to solve the matrix pencil problem.
\end{enumerate}

\item[\textsf{NEP\_MiniMax-CORK}:] It computes eigenpairs by 
\begin{enumerate}
\item constructing a rational minimax  approximant  via \textsf{m-d-Lawson} and
\item applying the CORK method \cite{bemm:2015} to solve the resulting matrix pencil.
\end{enumerate}
\end{description}

\begin{example}\label{ex:example1}
Our first example is from \cite[Section 2.1]{guti:2017}  with 
    \begin{equation}\label{eq:example1}
        T(x)=\begin{bmatrix}
            {e}^{{\rm i}x^2}&1 \\ 1&1
        \end{bmatrix}.
    \end{equation} 
The matrix $T$ becomes singular precisely at the points satisfying $e^{ix^2} = 1$, which occur when $x \in \{\pm\sqrt{2\pi k} : k \in \mathbb{Z}\}$. Consequently, the eigenvalues of $T$ are given by
$$\Lambda^*:=\{\lambda_k = \pm\sqrt{2\pi k}, \quad k = 0, \pm1, \pm2, \dots\},$$
with $[1, -1]^\mathrm{T}$ serving as a corresponding right eigenvector for every eigenvalue. In our analysis, we focus on the disk $\Omega={\cal D}(0,3)$ centered at the origin ($c = 0$) with radius $r = 3$, which forms our region of interest.

Within our framework, we construct a set $\mathcal{X} = \{x_\ell\}_{\ell=1}^{100}$ consisting of 100 equidistant sampling points on the circular contour ${\cal C}(0,3)=\{x \in \mathbb{C} : |x| = 3\}$.
 Although the NEP in this example is only two-dimensional, uniformly approximating $\bt(x) = [e^{ix^2}, 1]^\mathrm{T}$ with high accuracy over the complex disk is not a low-degree task. Indeed, for \(x=3e^{{\rm i}\theta}\in\partial\Omega\),
$
|e^{{\rm i}x^2}|=e^{-9\sin(2\theta)}
$
 ranges from \(e^{-9}\) to \(e^9\) along the boundary. We therefore examine type \((d,d)\) approximations of $\bt(x)$ for \(0\le d\le30\) on $\mathcal{X}$ and evaluate the error $\epsilon_{{\boldsymbol{\xi}^*}}^{{\cal X}_{\rm val}}$ given in \eqref{eq:minimaxerr-val} on an independent validation set \({\cal X}_{\rm val}\) of 2000  equidistant boundary points. By applying {\sf m-d-Lawson} and set-valued AAA (SV-AAA)  {\cite{limp:2022}} to obtain their corresponding type $(d,d)$ rational approximants, we show in Figure~\ref{fig:err_curve_gauss} the errors $\epsilon_{{\boldsymbol{\xi}^*}}^{{\cal X}_{\rm val}}$. One can see that the associated  $\epsilon_{{\boldsymbol{\xi}^*}}^{{\cal X}_{\rm val}}$   
decreases   with \(d\) and reaches \(10^{-10}\) near \(d=28\).  

 \begin{figure}[h!!!]
        \centering
        \includegraphics[width=0.49  \textwidth]{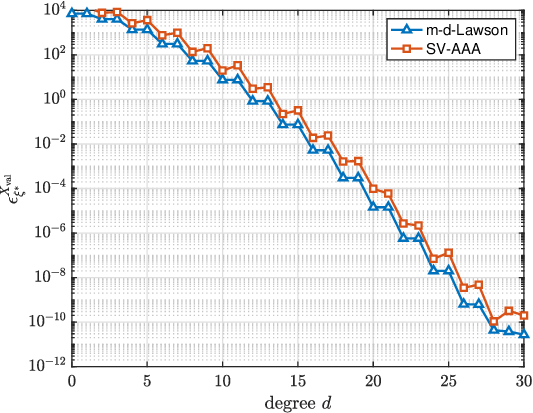}
        \caption{\footnotesize For types $(d,d)$  approximants ${\boldsymbol{\xi}^*}$ of $\bt(x) = [e^{ix^2}, 1]^\mathrm{T}$ over $\mathcal{X}$ produced by {\sf m-d-Lawson} and SV-AAA, it plots the error $\epsilon_{{\boldsymbol{\xi}^*}}^{{\cal X}_{\rm val}}$ defined in \eqref{eq:minimaxerr-val}.}
    \label{fig:err_curve_gauss}
    \end{figure}

Using {\sf NEP\_MiniMax} and   SV-AAA, let
$\Lambda_d$ denote the corresponding computed set of eigenvalues in $\Omega$ for type $(d,d)$ rational approximant.   
We measure the spectral error by
\begin{equation}\nonumber
    D_{\rm H}(\Lambda_d,\Lambda^*)=\max\left\{\max_{\lambda \in \Lambda_d}\min_{\mu \in \Lambda^*}\left|\lambda-\mu\right|,\max_{\mu \in \Lambda^*}\min_{\lambda \in \Lambda_d}\left|\lambda-\mu\right|\right\}.
\end{equation}
Table \ref{table:example1_degree} reports $\epsilon_{\xi^*}^{X_{\rm val}}$  and $D_{\rm H}\left(\Lambda_d,\Lambda^*\right)$ for  {\sf NEP\_MiniMax} and   SV-AAA.  One can see that {\sf NEP\_MiniMax} in general achieves more accurate than SV-AAA for these types.
\begin{table}[h!!!]
        \caption{$\epsilon_{\xi^*}^{X_{\rm val}}$  and $D_{\rm H}\left(\Lambda_d,\Lambda^*\right)$ for  {\sf NEP\_MiniMax} and   SV-AAA in Example~\ref{ex:example1}}
        \centering
        \begingroup
        \small
        \setlength{\tabcolsep}{4pt}
        \renewcommand{\arraystretch}{1.3}
        \begin{tabular}{|c|l|c|c|c|c|}
        \hline
        Metric & Method & type $(6,6)$ & type $(10,10)$ & type $(18,18)$ & type $(24,24)$ \\
        \hline
        \multirow{2}{*}{$\epsilon_{\xi^*}^{X_{\rm val}}$}
        & {\sf NEP\_MiniMax} & 3.1283e+02 & 7.6303e+00 & 3.0205e-04 & 2.0142e-08 \\
        \cline{2-6}
        & SV-AAA & 7.7101e+02 & 1.9856e+01 & 1.6420e-03 & 6.9928e-08 \\
        \hline
        \multirow{2}{*}{$D_{\rm H}\left(\Lambda_d,\Lambda^*\right)$}
        & {\sf NEP\_MiniMax} & 1.9927e+00 & 4.2152e-01 & 1.3169e-04 & 1.2231e-05 \\
        \cline{2-6}
        & SV-AAA & 1.9749e+00 & 9.5162e-01 & 4.2403e-03 & 4.2963e-05 \\
        \hline
        \end{tabular}
        \endgroup
        \label{table:example1_degree}
    \end{table}

For a particular type (28,28) computed by {\sf NEP\_MiniMax},   Figure \ref{fig:fig_example1} (right panel) further displays the zeros of $p_1^*(x)$, $p_2^*(x)$, and $q^*(x)$. We observed that   $\boldsymbol{\xi}^*(x)$ remains pole-free within our target domain $\Omega$.
Figure \ref{fig:fig_example1} (left panel) plots the computed eigenvalues of $T(x)$ inside $\Omega$ as blue dots. It is interesting to observe that there are additional 12 computed eigenvalues of $P^*(x)$ outside $\Omega$ (labelled as red $+$)   matching the exact ones in $\Lambda^*$. This occurs because the rational approximant $\boldsymbol{\xi}^*(x)$ captures the analytic continuation of $\boldsymbol{t}(x)$ beyond the original domain $\Omega$.  
    \begin{figure}[h!!!]
        \centering
        \includegraphics[width=0.9\textwidth]{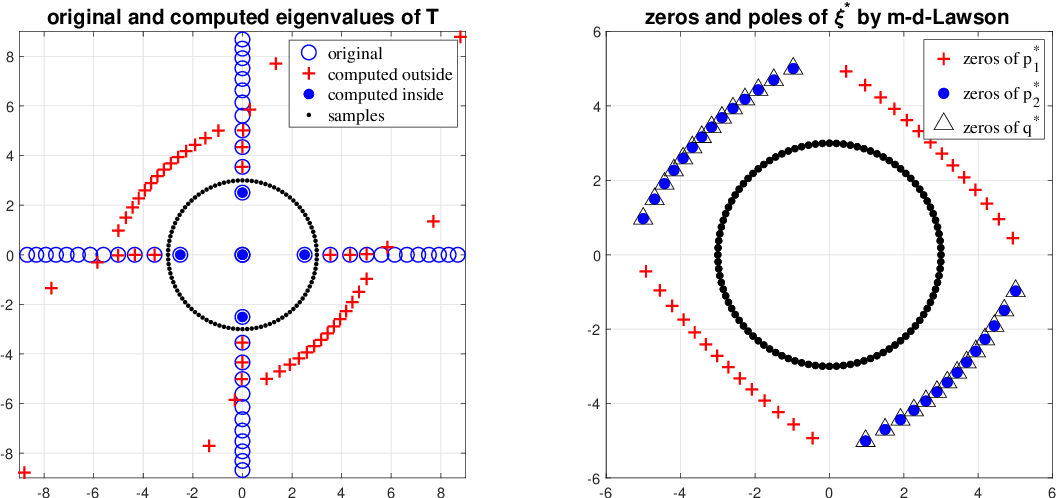}
        \caption{\footnotesize The left panel displays eigenvalue computations for \eqref{eq:example1}, where blue dot and red $+$ represent eigenvalues inside and outside the sampled region (black dots) respectively, obtained via Algorithm \ref{alg:linear_mdlawson}; original eigenvalues from \eqref{eq:example1} are shown as blue circles for comparison. The right panel plots the poles and zeros of ${\boldsymbol{\xi}}^*$ of type $(28,28)$, demonstrating that {\sf m-d-Lawson} produces a numerically minimax rational approximation, notably pole-free within $\Omega$.}
    \label{fig:fig_example1}
    \end{figure}

To numerically assess Remark~\ref{rmk:numerical_eta_estimate}, for $\boldsymbol{\xi}^*(x)$ of type $(28,28)$, we evaluate~\eqref{eq:eta_discrete_apriori_est} on the validation set ${\cal X}_{\rm val}$. The resulting quantities are
\[
\epsilon_{{\boldsymbol{\xi}^*}}^{{\cal X}_{\rm val}}=3.7300\times10^{-11},\qquad
\alpha_{{\cal X}_{\rm val}}=1.6232,\qquad
\sqrt{\left\|\mathscr{G}_{\{E_i\}_i}\right\|_2}=1.7321,
\]
and hence
$
\widehat{\nu}_{\Gamma}^{\,{\rm bd}}=1.0487\times10^{-10}<1.
$
Together with the pole-free property, this value provides a numerical evidence for the fulfillment of the sufficient condition in Proposition~\ref{prop:spectral_count_preservation}. 
\end{example}

\begin{example}\label{ex:example3}
     {(Time delay)}. We consider the example   in \cite[Example 1]{saem:2020} where 
    \begin{equation}\label{eq:time_delay2_def}
        T(x)=-B_0+xI+{ e}^{-\tau x}A_1,
    \end{equation}
and 
       $ B_0=\begin{bmatrix}
            -5&1\\2&-6
        \end{bmatrix},\ A_1=\begin{bmatrix}
            2&-1\\-4&1
        \end{bmatrix},\ \tau=1.$
    This problem is also contained in the NLEVP collection \cite{behm:2013} under the name {\sf time}\_{\sf delay2}. 

   In this example, we use {\sf NEP\_MiniMax} to compute eigenvalues of $T(x)$ within the disk ${\cal D}(-1,6)$ (centered at $c=-1$ with radius $r=6$). For the approximation, we collect 50 equiangular samples on  ${\cal C}(-1,6)$ and construct the set $\mathcal{X}$. Applying {\sf m-d-Lawson}, we obtain a type (10,10) vector-valued  rational approximant  $\boldsymbol{\xi}^*(x) = \frac{1}{q^*(x)}[p^*_1(x), p^*_2(x), p^*_3(x)]^{\T}$ to $\bt(x) = [1, x, e^{-x}]^{\T}$, achieving an approximation error $\sqrt{e(\boldsymbol{\xi}^*)} < 10^{-7}$.
The right panel of Figure \ref{fig:fig_example3} plots the zeros and poles of $\boldsymbol{\xi}^*(x)$, confirming its pole-free property within the disk. Also, we compute eigenvalues of the rational matrix $R^*(x) = \frac{1}{q^*(x)}(-p^*_1(x)B_0 + p^*_2(x) I_2 + p^*_3(x) A_1)$ via linearization, yielding eigenvalue approximations for $T(x)$, as shown in the left panel of Figure \ref{fig:fig_example3}.

\begin{figure}[h!!!]
        \centering
         \includegraphics[width=0.9\textwidth]{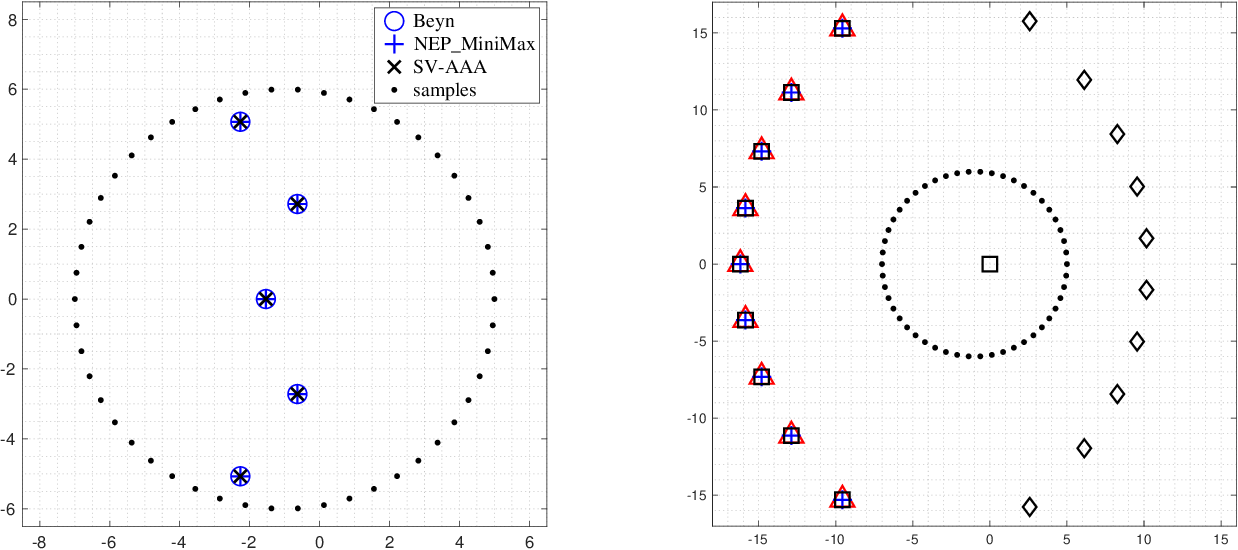}
        \caption{\footnotesize Left panel: Computed eigenvalues of \eqref{eq:time_delay2_def} obtained via {\sf NEP\_MiniMax} ($+$), Beyn's method ($\circ$), and SV-AAA ($\times$).
Right panel: Illustration of poles of $\boldsymbol{\xi}^*$ ($\bigtriangleup$) and zeros of $p^*_1$ ($+$), $p^*_2$ ($\square$), and $p^*_3$ ($\Diamond$), computed using {\sf m-d-Lawson}.  Only points with magnitude less than 18 are displayed.}
    \label{fig:fig_example3}
    \end{figure}

    For comparison, we compute eigenvalues within ${\cal D}(-1,6)$ using Beyn's integral method  {\cite{beyn:2012}} and  SV-AAA. In Beyn's method, we use the numerical quadrature nodes $\mathcal{X} $ (i.e., $50$ equispaced points in  {the trapezoidal rule}) as moment-computation points and fix the probing matrix to a single column (see \cite[Section 5]{guti:2017} for details). For SV-AAA, we increase the boundary sampling density by a factor of two and iteratively select 11 support points (corresponding to a type $(10,10)$ rational approximation for $T(x)$) to ensure an approximation error below $10^{-8}$. Table \ref{table:table1} summarizes the   residuals  \begin{equation}\label{eq:relative_residual_def}
\epsilon(\lambda, \bu) =  \frac{\|T(\lambda)\bu\|_2}{\|\bu\|_2} 
\end{equation}  
 of the eigenpairs obtained via {\sf NEP\_MiniMax}, Beyn's method, and SV-AAA.

    \begin{table}[h!!!]
        \caption{Residuals $\epsilon(\lambda, \boldsymbol{u})$ for Example \ref{ex:example3}}
        \begin{center}\resizebox{150mm}{12mm}{\tabcolsep0.15in
        \begin{tabular}{|c|c|c|c|c|c|c|}\hline
        relative residuals &eig-index  &eig-index &  eig-index  &  eig-index  &  eig-index \\
       $\epsilon(\lambda, \boldsymbol{u})$ & 1  & 2  &   3   &  4 & 5 \\
        \hline
        \hline
      {\rm {\sf NEP\_MiniMax}}   & 2.3660e-09 &  3.1122e-11 &  5.2727e-10  & 8.3716e-11  & 2.8457e-09 \\  
         {\rm Beyn's\ method}   & 4.9460e-06 &  3.0185e-04 &  3.9748e-05 &  3.0185e-04 &  4.9460e-06 \\  
     {\rm SV-AAA}   &2.4814e-09 & 1.4055e-09 &  1.9067e-10 &  4.1219e-09 &  7.6956e-09   \\
    \hline
    \end{tabular}
    }
    \end{center}
    \label{table:table1}
    \end{table}

To numerically verify Proposition~\ref{prop:spectral_count_preservation}, we likewise evaluate \eqref{eq:eta_discrete_apriori_est} on an independent validation set ${\cal X}_{\rm val}$ of 2000 equidistant points on ${\cal C}(-1,6)$. We have
\[
\epsilon_{{\boldsymbol{\xi}^*}}^{{\cal X}_{\rm val}}=5.0729\times10^{-8},\qquad
\alpha_{{\cal X}_{\rm val}}=0.26072,\qquad
\sqrt{\left\|\mathscr{G}_{\{E_i\}_i}\right\|_2}=8.8854,
\]
and hence
$
\widehat{\nu}_{\Gamma}^{\,{\rm bd}}=1.1752\times10^{-7}<1.
$
As all computed poles are outside $\Omega$, this value lends numerical support to the sufficient condition stated in Proposition ~\ref{prop:spectral_count_preservation}. 
\end{example}
 
\begin{example}\label{ex:hadeler}
    (Hadeler problem). We next consider the Hadeler problem  \cite{eims:2020,hade:1967} where
    \begin{equation}\label{eq:hadeler_def}
        T(x)=\left({  e}^{x}-1\right)B_1+x^2B_2-B_0,
    \end{equation}
    with the coefficient matrices
    \begin{equation*}
        B_0=b_0I_n,\ \ B_1=[b^{(1)}_{ij}],\ \ B_2 = [b^{(2)}_{ij}],
    \end{equation*}
    \begin{equation*}
        b^{(1)}_{ij}=(n+1-\max\{i,j\})ij, \ \ b^{(2)}_{ij}=n\delta_{ij}+\frac{1}{i+j},
    \end{equation*}
    and  \(n=200\) and  \(b_0=100\).

The eigenvalues of \eqref{eq:hadeler_def} are all real, consisting of $n$ negative and $n$ positive values. These eigenvalues become more uniformly spaced farther from the origin, with the smallest eigenvalue near $-48$. Following \cite{saem:2020}, we compute the eigenvalues within the circular region $\Omega= {\cal D}(-30, 11.5)$.
We discretize the boundary ${\cal C}(-30, 11.5)$ using 50 equidistant points to form the approximation set $\mathcal{X}$. Applying {\sf m-d-Lawson}, we obtain a type $(6,6)$ vector-valued rational approximant $\boldsymbol{\xi}^*(x)$ for $\bt(x)=[-1,x^2, e^{x} - 1]^{\T}$, achieving $\sqrt{e(\boldsymbol{\xi}^*)} < 10^{-10}$. Crucially, $\boldsymbol{\xi}^*(x)$ remains pole-free in $ {\cal D}(-30, 11.5)$. Finally, we solve the resulting $1200 \times 1200$ pencil $C_0 - \lambda C_1$ to compute the eigenvalues.
    
    \begin{figure}[h!!!]
        \centering
        \includegraphics[width=0.95\textwidth]{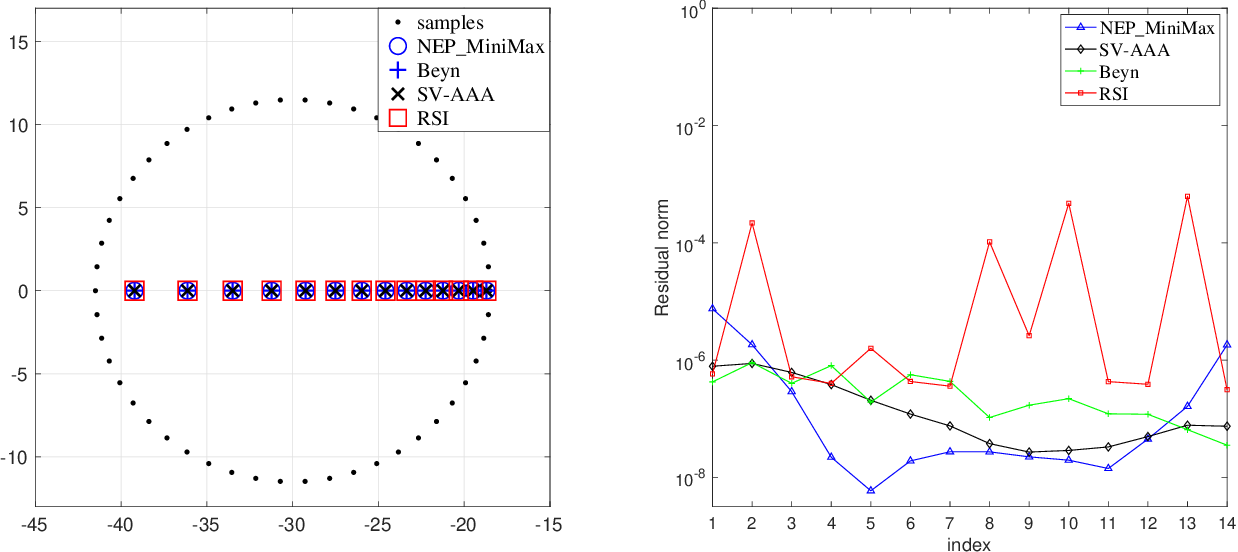}
        \caption{\footnotesize  Left: Eigenvalues of \eqref{eq:hadeler_def} computed via \textsf{NEP\_MiniMax} ($\circ$), Beyn's method ($+$), SV-AAA ($\times$), and RSI ($\square$) are displayed. Right: Residuals $\epsilon(\lambda,\boldsymbol{u})$ of the eigenpairs obtained from \textsf{NEP\_MiniMax}, Beyn's method, SV-AAA, and RSI in Example~\ref{ex:hadeler}.}
    \label{fig:fig_hadeler1}
    \end{figure}

    To facilitate comparison with RSI~\cite{saem:2020}, Beyn's method, and SV-AAA \cite{limp:2022}, we begin by computing eigenvalues using \textsf{NEP\_MiniMax}. Figure~\ref{fig:fig_hadeler1} displays these numerical results together with their corresponding   residuals $\epsilon(\lambda, \boldsymbol{u})$ in \eqref{eq:relative_residual_def}  for eigenpairs located within $\Omega$.
We observe that the   residuals $\epsilon(\lambda,\boldsymbol{u})$  can be improved by increasing the approximation degree in \textsf{m-d-Lawson}. Higher degrees yield smaller $\sqrt{e(\boldsymbol{\xi}^*)}$, which, according to Corollary~\ref{coro:coro0}, further reduces $\epsilon(\lambda,\boldsymbol{u})$. 
To numerically validate the estimation formula~\eqref{eq:err_approx}, Figure~\ref{fig:fig_hadeler_prio_real} (left) compares 
\begin{itemize}
    \item the actual   residuals from \textsf{NEP\_MiniMax} using rational approximations of degrees $6$, $7$, and $8$, 
    \item their corresponding a priori estimates $\sqrt{\left\|\mathscr{G}_{\{E_i\}_i}\right\|_2 \cdot e(\boldsymbol{\xi}^*)}$, where $\sqrt{\left\|\mathscr{G}_{\{E_i\}_i}\right\|_2} = 1.0282 \times 10^8$.
\end{itemize}
The results confirm the validity of Corollary~\ref{coro:coro0} and demonstrate that increasing the approximation degree consistently reduces the   residuals.

    \begin{figure}[h!!!]
        \centering
        \includegraphics[width=0.9\textwidth]{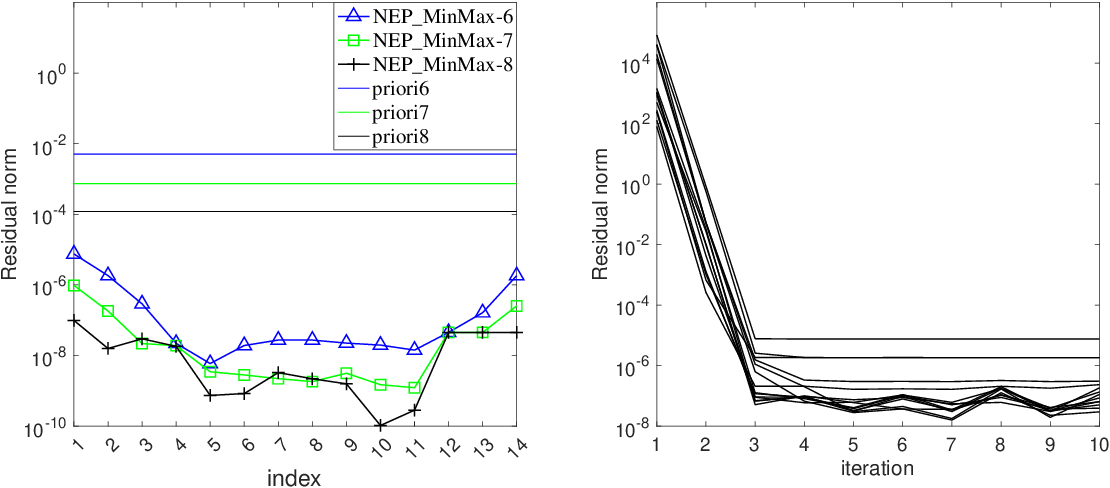}
        \caption{\footnotesize Left: Residuals $\epsilon(\lambda, \boldsymbol{u})$ of the eigenpairs computed by {\sf NEP\_MiniMax} using rational approximations of degrees 6, 7, and 8, along with their corresponding a priori estimates $\sqrt{\|\mathscr{G}_{\{E_i\}_i}\|_2 \, e(\boldsymbol{\xi}^*)}$. Here, {\sf NEP\_MiniMax}-6 refers to the computation where {\sf m-d-Lawson} produces a type (6,6) rational approximation, followed by linearization and eigenvalue extraction (priori6 denotes the a priori estimate  for {\sf NEP\_MiniMax}-6; analogous conventions apply to degrees 7 and 8).
Right: Residuals $\epsilon(\lambda, \boldsymbol{u})$ for eigenpairs in $\Omega$ over the first 10 iterations of {\sf NEP\_MiniMax-FILTER}-6.}
    \label{fig:fig_hadeler_prio_real}
    \end{figure}

In this example, we evaluate the performance of \textsf{NEP\_MiniMax-FILTER} for comparison, as the pencil's non-trivial scale warrants such analysis. Following the strategy in~\cite{chli:2025} (adapted for the original NEP via Remark~\ref{rmk:nep_pencil_relation}), we verify the convergence of rational-filter subspace iteration (Algorithm~\ref{alg:si_filter}) using two thresholds $0 < \tau_r < \tau_g$. An eigenpair $(\lambda,\boldsymbol{v})$ of $(C_0,C_1)$ with relative residual $\varsigma_{c,r}(\lambda,\boldsymbol{v}) \geq \tau_g$ is classified as a ghost eigenpair, where
$$\varsigma_{c,r}(\lambda,\boldsymbol{v}) := \frac{\|T(\lambda)\boldsymbol{v}_{1}\|_2}{(|c|+r)\|\boldsymbol{v}_{1}\|_2},~c=-30, ~r=11.5,$$
and $\boldsymbol{v}_{1}$ comprises the first $n$ rows of $\boldsymbol{v}$. For this example, we set $\tau_r = 10^{-4}$ and $\tau_g = 10^{-2}$. The right panel of Figure~\ref{fig:fig_hadeler_prio_real} displays the residuals $\epsilon(\lambda,\boldsymbol{v}_{1}) = (|c|+r)\varsigma_{c,r}(\lambda,\boldsymbol{v})$ for eigenpairs in $\Omega$ during the first 10 iterations of the subspace method with a degree 6   filter. The results show that \textsf{NEP\_MiniMax-FILTER} attains accuracy comparable to \textsf{NEP\_MiniMax} within just 5 iterations. Additionally, experiments reveal a significant runtime (including the time of {\sf m-d-Lawson}) reduction from 12 seconds (\textsf{NEP\_MiniMax}) to 3 seconds for this case.
 
Following the previous examples, we evaluate \eqref{eq:eta_discrete_apriori_est} on a  set $ X_{\text{val}} $ consisting of 2000 equidistant points on ${\cal C}(-30, 11.5) $.  This gives 
\[
\epsilon_{{\boldsymbol{\xi}^*}}^{{\cal X}_{\rm val}}=4.9440\times10^{-11},\qquad
\alpha_{{\cal X}_{\rm val}}=6.4084\times10^{-4},\qquad
\sqrt{\left\|\mathscr{G}_{\{E_i\}_i}\right\|_2}=1.0282\times10^{8},
\]
and hence
$
\widehat{\nu}_{\Gamma}^{\,{\rm bd}}=3.2578\times10^{-6}<1.
$
\end{example}

\begin{example}\label{ex:gun}
     {(Gun problem)}. 
     We next examine the ``gun" problem, a relatively large-scale model of a radio-frequency gun cavity described by the nonlinear eigenvalue problem with
\begin{equation}\label{eq:gun_def}
T(x) = K - x M + {\rm i}\sqrt{x-\sigma_1^2}W_1 + {\rm i}\sqrt{x-\sigma_2^2}W_2,
\end{equation}
where $M$, $K$, $W_1$, and $W_2$ are symmetric $9956\times9956$ matrices from \cite{guti:2017,libl:2010}. The complex square root $\sqrt{\cdot}$ uses the principal branch. Following \cite{behm:2013,libl:2010}, we set $\sigma_1 = 0$ and $\sigma_2 = 108.8774$. Our computational target consists of the 20 eigenvalues nearest to $250^2$ that lie within the upper semi-disk $\Omega$ centered at $c=250^2$ with radius $r=300^2 - 200^2$.
 
 We assess the performance of {\sf NEP\_MiniMax-CORK} in this example. Following a similar sampling construction in \cite{limp:2022},  we construct our approximation set $\mathcal{X}$ with 1025 points through a two-step process: first, we generate 525 equiangularly distributed boundary samples along $\partial\Omega$ (including  the interval $[c-r,c+r]$) where all sampling points correspond to uniformly spaced central angles; subsequently, we augment these boundary points with 500 randomly distributed interior points within the upper semi-disk $\Omega$ to ensure robust domain representation. Applying {\sf m-d-Lawson}, we then generate a type (6,6) rational approximation $\boldsymbol{\xi}^*(x)=[r_1^*(x),r_2^*(x),r_3^*(x),r_4^*(x)]^{\T}$ for $\bt(x)=[1,-x,{\rm i}\sqrt{x},{\rm i}\sqrt{x-\sigma_2^2}]^{\T}$. This approximation yields a structured  {$(6\cdot9956)\times(6\cdot9956)$ matrix-pencil} linearization $C_0 - \lambda C_1$ via Kronecker products, whose eigenvalues within $\Omega$ coincide with those of $R^*(x) = r_1^*(x) K + r_2^*(x) M + r_3^*(x) W_1 + r_4^*(x) W_2$, provided $\boldsymbol{\xi}^*(x)$ has no poles in $\Omega$. The CORK framework  is then used to solve the linearized problem (see \cite{bemm:2015} for implementation specifics).
Figure \ref{fig:gun_eigs} displays the poles of $\boldsymbol{\xi}^*(x)$ alongside the eigenvalues of $T(x)$ computed via {\sf NEP\_MiniMax-CORK}   within $\Omega$, compared against NLEIGS results. Notably, {\sf NEP\_MiniMax-CORK}  terminates in under 4 seconds (including the time of {\sf m-d-Lawson}), demonstrating its efficiency in  {computing the targeted eigenvalues of the gun problem from a split coefficient-vector rational surrogate}.

    \begin{figure}[h!!!]
        \centering
        \includegraphics[width=0.8\textwidth]{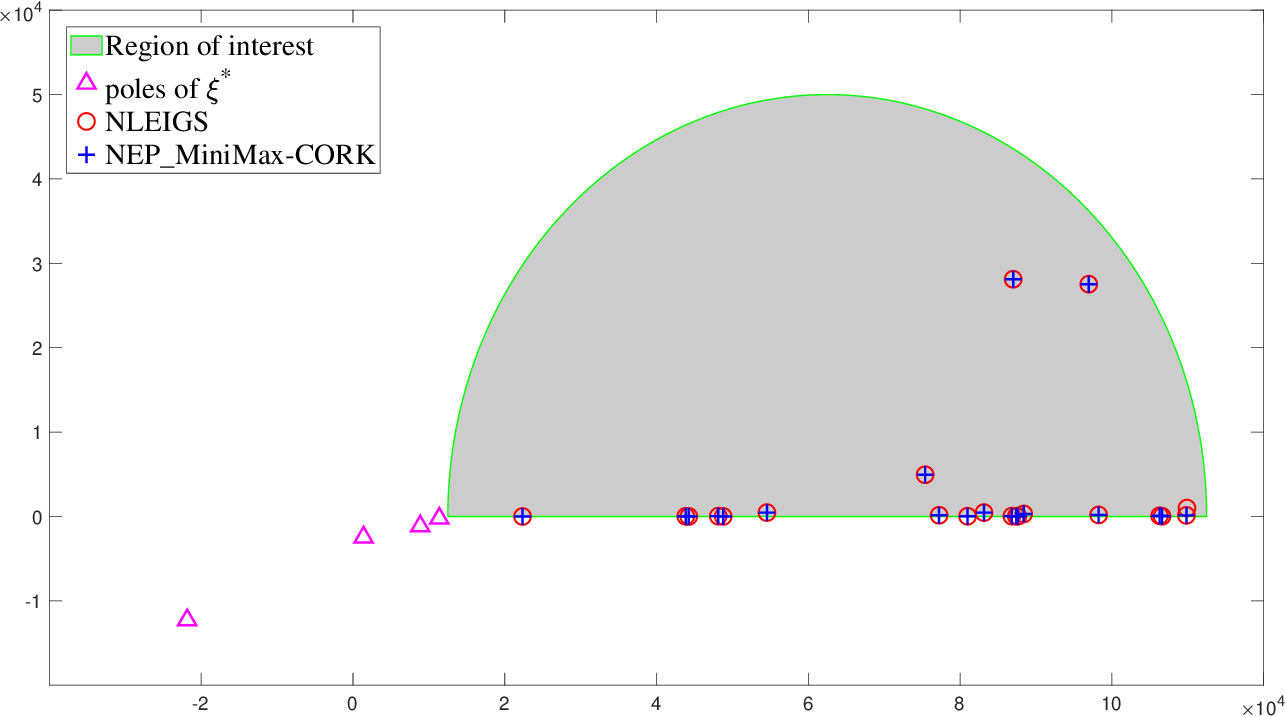}
        \caption{\footnotesize This figure displays the computed eigenvalues of the gun problem~\eqref{eq:gun_def} within the region of interest, obtained via NLEIGS (marked with red $\circ$) and \textsf{NEP\_MiniMax-CORK} (shown as blue $+$), along with the poles of $\boldsymbol{\xi}^*(\lambda)$ (indicated by $\bigtriangleup$). The results confirm that $\boldsymbol{\xi}^*(\lambda)$ remains pole-free throughout the region of interest.}
    \label{fig:gun_eigs}
    \end{figure}
    
 Figure~\ref{fig:gun_res_history} presents the convergence history of the normalized residual  
 \begin{equation}\label{eq:def_res_gun}
    \varrho_{\mathrm{gun}}(\lambda,\boldsymbol{u}) = \frac{\|T(\lambda)\boldsymbol{u}\|_2}{\|K\|_1 + |\lambda|\cdot\|M\|_1 + \sqrt{|\lambda-\sigma_1^2|}\cdot \|W_1\|_1 + \sqrt{|\lambda-\sigma_2^2|}\cdot\|W_2\|_1},
\end{equation}
 for eigenpairs of $T(x)$~\eqref{eq:gun_def} computed via \textsf{NEP\_MiniMax-CORK}. The implementation uses 
\begin{itemize}
    \item maximum subspace dimension $ 50$,
    \item $p = 35$ selected Ritz values per restart, and
    \item cyclically repeated shifts in rational Krylov steps.
\end{itemize}
The results show that all $20$ eigenvalues closest to $250^2$ were computed to a tolerance of $10^{-7}$ in fewer than 80 iterations (4 seconds, including the runtime of {\sf m-d-Lawson}).

    \begin{figure}[h!!!]
        \centering
         \includegraphics[width=0.8\textwidth]{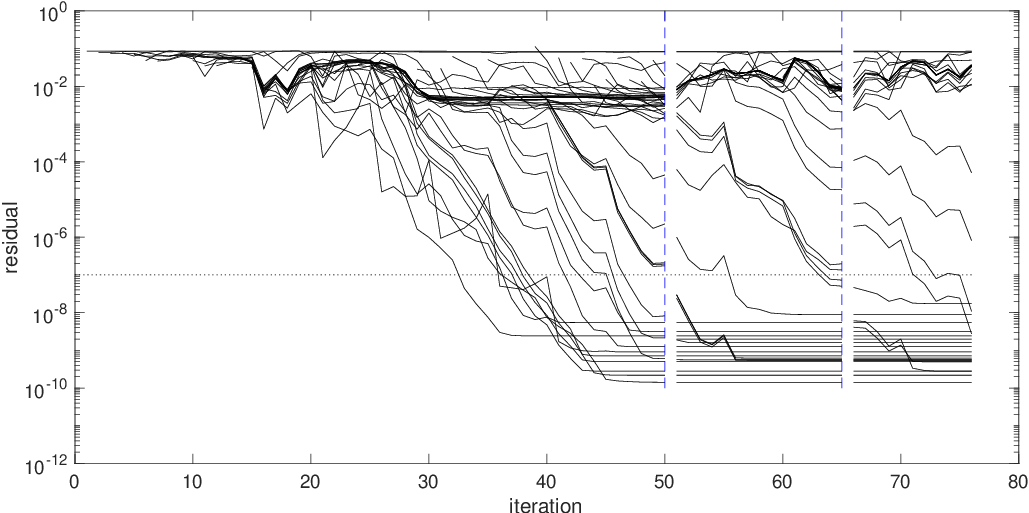}
        \caption{\footnotesize The convergence history of the residuals \(\varrho_{{\rm gun}}(\lambda,\pmb{u})\) \eqref{eq:def_res_gun} of the computed eigenpairs to \(T(x)\) \eqref{eq:gun_def} via \textsf{NEP\_MiniMax-CORK}  with maximum subspace dimension \(50\) and \(35\) selected Ritz values in every restart.}
    \label{fig:gun_res_history}
    \end{figure}

    \begin{figure}[h!!!]
        \centering
        \includegraphics[width=0.93\textwidth]{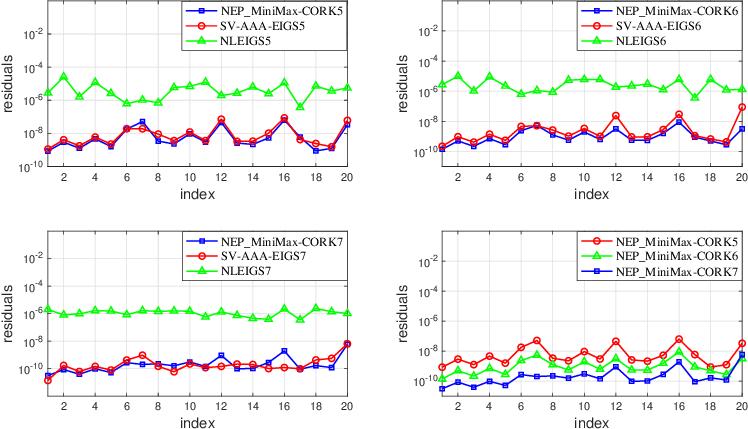}
        \caption{\footnotesize The residuals $\varrho_{\mathrm{gun}}(\lambda,\boldsymbol{u})$ of the computed eigenpairs are shown in the top-left, bottom-left, and top-right panels for \textsf{NEP\_MiniMax-CORK}, SV-AAA-EIGS, and NLEIGS at approximation degrees 5, 6, and 7 respectively. The bottom-right panel demonstrates how increasing the approximation degree further reduces the residuals obtained via \textsf{NEP\_MiniMax}.}
    \label{fig:fig_gun_apx_VS}
    \end{figure}
    
We present a comparative analysis of eigenvalue computation methods by evaluating the residuals $\varrho_{\mathrm{gun}}(\lambda,\boldsymbol{u})$~\eqref{eq:def_res_gun} for 20 eigenpairs obtained from \textsf{NEP\_MiniMax}, SV-AAA-EIGS, and NLEIGS at approximation degrees 5, 6, and 7. 
Figure~\ref{fig:fig_gun_apx_VS} demonstrates that \textsf{NEP\_MiniMax} achieves smaller residuals than NLEIGS (comparable to SV-AAA-EIGS) at identical approximation degrees. The figure's lower-right panel shows that increasing the approximation degree further reduces \textsf{NEP\_MiniMax}'s residuals.

{
To check Remark \ref{rmk:numerical_eta_estimate}, if we   evaluate~\eqref{eq:eta_discrete_apriori_est} on a  set ${\cal X}_{\rm val}$ containing 100 midpoint samples on the semicircular arc and 100 on the diameter, we find that 
\[
\epsilon_{{\boldsymbol{\xi}^*}}^{{\cal X}_{\rm val}}
 =5.4571\times10^{-3},\qquad
\alpha_{{\cal X}_{\rm val}}
 =1.1528\times10^{1},\qquad
\sqrt{\left\|\mathscr{G}_{\{E_i\}_i}\right\|_2}
 =1.2748\times10^{6},
\]
which unfortunately gives
$
\widehat{\nu}_{\Gamma}^{\,{\rm bd}}
 =8.0191\times10^{4}>1.
$
However, we can refine the estimate by balancing the split. In particular, 
we consider  
\[
\widetilde t_j(x)=s_jt_j(x),\qquad
\widetilde E_j=E_j/s_j,\qquad
s_j=\frac{\|E_j\|_{\F}}{\max_k\|E_k\|_{\F}},
\]
which leaves $T(x)=\sum_j t_j(x)E_j$ unchanged. Recomputing the type $(6,6)$ approximant yields
\[
\epsilon_{{\boldsymbol{\xi}^*}}^{{\cal X}_{\rm val}}
 =5.4906\times10^{-8},\qquad
\sqrt{\left\|\mathscr{G}_{\{\widetilde E_i\}_i}\right\|_2}
 =1.6191\times10^{6},
\]
and hence $\widehat{\nu}_{\Gamma}^{\,{\rm bd}}=1.0248>1$. While certification is still not obtained, balancing reduces the estimate by approximately $7.8\times10^4$. Moreover, neither approximant has a computed pole in $\bar{\Omega}$. These results demonstrate that the split representation can substantially affect both the coefficient-space approximation and the sharpness of the bound, motivating a further topic of   scaling balance of $T(x)$ in {\sf NEP\_MiniMax} for large-scale problems.
}
\end{example}

\begin{example}\label{ex:sandwich_beam}
    (Sandwich beam problem). The sandwich beam problem from the NLEVP
    collection~\cite{behm:2013} has the form 
    \begin{equation}\label{eq:sandwich_def}
        T(x)=K-x^2M+g(x)C,
        \qquad
        g(x)=
        \frac{G_0+G_\infty({\rm i}x\tau)^\alpha}
        {1+({\rm i}x\tau)^\alpha},
    \end{equation}
    where $K,M,C\in\mathbb{R}^{168\times168}$. We use NLEVP implementation with
    $G_0=3.504\times10^5$, $G_\infty=3.062\times10^9$,
    $\alpha=0.675$, and $\tau=8.23\times10^{-9}$, and take the fractional power in~\eqref{eq:sandwich_def}
 on its principal branch.

    Following~\cite{limp:2022,bemm:2013}, we consider the real interval $[200,30000]$ embedded in the disk $\Omega=\mathcal{D}(15100,14900)$ with boundary $\Gamma=\partial\Omega=\mathcal{C}(15100,14900)$. Using the affine mapping   $x=\phi(z):=15100+14900z$ for $z\in\overline{\widehat{\Omega}}$, where $\widehat{\Omega}:=\mathcal{D}(0,1)$ and $\widehat{\Gamma}:=\partial\widehat{\Omega}=\mathcal{C}(0,1)$, we transfer computations to the standard unit disk. With $\operatorname{Re}(x)\geq200$ on $\bar{\Omega}$, the selected branch of $g$ remains analytic in a neighborhood of $\bar{\Omega}$. All approximations are performed in the $z$-plane, while eigenvalues and residuals are evaluated and presented back in the original $x$-plane.

The approximation set consists of $100$ equiangular points on
    $\widehat\Gamma$, and we approximate
    $\bt(\phi(z))=[1,-\phi(z)^2,g(\phi(z))]^{\T}$. For validation we use
    $2000$ midpoint boundary samples and $2000$ area-uniform interior samples,
    denoted collectively by ${\cal X}_{\rm test}$.  For
    $\widehat R^*(z):=R^*(\phi(z))$, we report
    \begin{equation}\label{eq:sandwich_validation_error}
        \epsilon_{\rm mat}^{{\cal X}_{\rm test}}=
        \max_{z\in{\cal X}_{\rm test}}
        \frac{\|T(\phi(z))-\widehat R^*(z)\|_{\rm F}}
             {\|T(\phi(z))\|_{\rm F}},
    \end{equation}
    and, similarly to \eqref{eq:def_res_gun}, measure each computed eigenpair  by
    \begin{equation}\label{eq:sandwich_residual}
        \varrho_{\rm rel}(\lambda,\boldsymbol{u})=
        \frac{\|T(\lambda)\boldsymbol{u}\|_2}
             {\|T(\lambda)\|_1\|\boldsymbol{u}\|_2}.
    \end{equation}

    A type $(4,4)$ \textsf{m-d-Lawson} approximant is pole-free in
    $\overline{\widehat\Omega}$, has four finite poles, and gives
    $\epsilon_{\rm mat}^{{\cal X}_{\rm test}}=6.7185\times10^{-9}$. The
    resulting structured linearization has order $672$. We solve it
    by CORK on the upper half of the unit disk, using maximum subspace
    dimension $50$, $35$ retained Ritz values, and the ten cyclic shifts
    \[
        \mu_j=0.4\exp\!\left(\frac{(j-1/2)\pi{\rm i}}{10}\right),
        \qquad j=1,\ldots,10.
    \]
    After $95$ Krylov steps, CORK returns ten eigenpairs in
    $\Omega_+:=\{\lambda\in\Omega:\operatorname{Im}(\lambda)\geq0\}$, with
    largest residual $2.4678\times10^{-13}$. Figure~\ref{fig:sandwich_spectrum}
    shows these eigenvalues, the boundary samples, the nearest finite
      poles, and the reference values from~\cite{bemm:2013}.

 \begin{figure}[H]
        \centering
        \includegraphics[width=0.6 \textwidth]{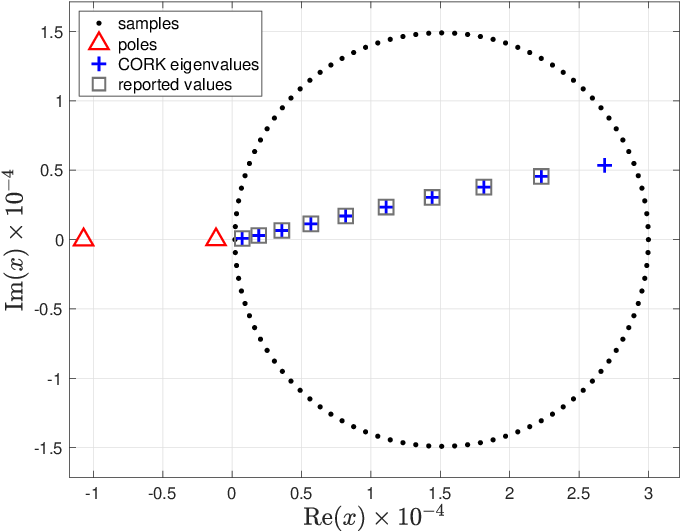}
        \caption{\footnotesize It shows samples used by the type
        $(4,4)$
        \textsf{m-d-Lawson}, the two finite   poles
        nearby, the ten eigenvalues computed by {\sf NEP\_MiniMax-CORK}, and the
        nine values reported in~\cite{bemm:2013} in the original
        $x$-plane.}
        \label{fig:sandwich_spectrum}
    \end{figure}
    
     To compare the rational approximation stages independently of Krylov
    parameters, we also solve the complete linearizations obtained from
    \textsf{NEP\_MiniMax} and SV-AAA~\cite{limp:2022} with 
    $2,\ldots,10$ poles, using the same map, split functions, training set, and
    test set. None of the resulting approximants has a pole in the closed
    target disk. Table~\ref{tab:sandwich_comparison} reports three
    representative cases. 

    \begin{table}[htbp]
        \centering
        \small
        \caption{Numerical results for sandwich-beam problems in Example \ref{ex:sandwich_beam}}
        \label{tab:sandwich_comparison}
        \begin{tabular}{@{}lccccc@{}}
            \hline
            Method & $d$ & $\epsilon_{\rm mat}^{{\cal X}_{\rm test}}$ & linearization size&
           \# of  eigenvalues & $\max\varrho_{\rm rel}$ \\ \hline
            \textsf{NEP\_MiniMax}  
                & 4 & $6.7185\times10^{-9}$ & 672 & 10 & $1.8558\times10^{-12}$ \\
            SV-AAA  
                & 4 & $8.0653\times10^{-7}$ & 1008 & 10 & $2.4010\times10^{-10}$ \\
            SV-AAA  
                & 7 & $1.1502\times10^{-9}$ & 1512 & 10 & $5.1030\times10^{-12}$ \\ \hline
        \end{tabular}
    \end{table}

    Figure~\ref{fig:sandwich_budget} provides the behavior  of the methods \textsf{NEP\_MiniMax} and SV-AAA with varying number of poles in their rational approximations. Using type $(4,4)$, \textsf{NEP\_MiniMax} reduces  $\epsilon_{\rm mat}^{{\cal X}_{\rm test}}$ by
    about two orders of magnitude compared with SV-AAA. SV-AAA becomes more
    accurate in validation only after using a larger linearization, and for
    the cases with more than $8$ poles, the two methods are comparable. The type $(4,4)$
    \textsf{NEP\_MiniMax} approximation is used as the representative setting
    because it is pole-free, returns all ten reported eigenpairs, and already
    yields residuals of order $10^{-12}$. The AAA-CORK experiment in~\cite{limp:2022}
    is not directly identical, since it keeps $K-x^2M$ exact and approximates
    only $g(x)$ from $10^4$ real samples.

    \begin{figure}[H]
        \centering
        \includegraphics[width=0.9\textwidth]{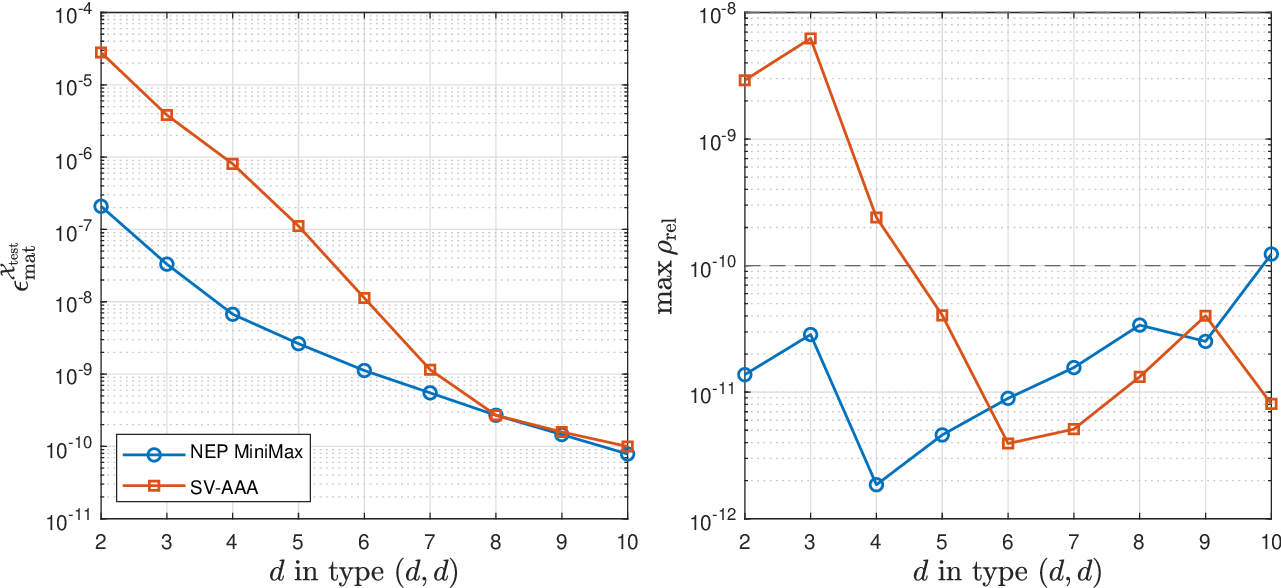}
        \caption{\footnotesize Behaviors of $\epsilon_{\rm mat}^{{\cal X}_{\rm test}}$ and $\varrho_{\rm rel}$  with various approximation types $(d,d)$  for the sandwich-beam problem on the
        same approximation set ${\cal X}$ and test set ${\cal X}_{\rm test}$.
        Left:   $\epsilon_{\rm mat}^{{\cal X}_{\rm test}}$ on ${\cal X}_{\rm test}$. Right: maximum  $\varrho_{\rm rel}$ of the computed eigenpairs
        in $\Omega_+$.}
        \label{fig:sandwich_budget}
    \end{figure}
\end{example}

\begin{example}\label{ex:car_cavity}
    (Car cavity problem).
   As our final example, we consider the sparse acoustic car-cavity model
    with porous seats studied in~\cite{limp:2022}. Its dimension is $n=15036$,
    and we use the four-term split
    \begin{equation}\nonumber
        T(x)=K_0+h_K(x)K_1
        -x^2M_0-x^2h_M(x)M_1,
    \end{equation}
    where $K_0,K_1,M_0,M_1\in\mathbb{R}^{15036\times15036}$ are sparse
    symmetric positive semidefinite matrices. The porous-material functions
    $h_K$ and $h_M$, including their physical parameters and principal
    square-root branches, are taken from~\cite{limp:2022}; removable values at
    zero are imposed explicitly.
\end{example}

    Following the smaller rectangular target in~\cite{limp:2022}, we compute
    Ritz values in the domain
    \[
        \Omega_{\rm car}=[1,300]+{\rm i}[0,510],
        \qquad \Gamma_{\rm car}:=\partial\Omega_{\rm car}.
    \]
    The approximation set contains $400$ arclength-uniform boundary points, and
    \textsf{m-d-Lawson} is applied to
       $ \bt(x)=
        [1,h_K(x),-x^2,-x^2h_M(x)]^{\T},$
    with   a type $(9,9)$ approximant. The error is
    \[
        \sqrt{e(\boldsymbol{\xi}^*)}
        =\max_{x\in{\cal X}}
        \|\bt(x)-\boldsymbol{\xi}^*(x)\|_2
        =4.4208\times10^{-9},
    \]
    and the approximation has seven finite poles, none in
    $\bar{\Omega}_{\rm car}$.

We solve the resulting linearization of size
    $9\times 15036=135324$ by CORK   with maximum subspace dimension $50$,
    $35$ retained Ritz values, target shift $150.5$, stopping tolerance
    $10^{-9}$, and   ten cyclic real shifts
  $
        \mu_j=1+\frac{299(j-1)}{9},~ j=1,\ldots,10.
    $
    Finite Ritz values are selected geometrically in $\Omega_{\rm car}$ and
    deduplicated with relative threshold $10^{-8}$; residuals are reported, but
    not used as an acceptance filter. 
    CORK in \textsf{NEP\_MiniMax-CORK} terminates after $63$ Krylov steps and returns $20$ distinct Ritz pairs
    in $\Omega_{\rm car}$, with largest and median residuals $\varrho_{\rm rel}(\lambda,\boldsymbol{u})$ in \eqref{eq:sandwich_residual} 
    {$5.3299\times10^{-11}$} and $1.5805\times10^{-12}$, respectively.
    Figure~\ref{fig:car_cavity_spectrum} shows the computed values, sampling
    boundary, CORK shifts, and nearby   poles.   For comparison, we also report results from SV-AAA with
    type $(d,d)$ rational approximants in Table \ref{table:car_residual_comparison}, where  the quality
    of the computed eigenpairs   is measured by the minimum, median and maximum values of $\varrho_{\rm rel}(\lambda,\pmb{u})$ defined in \eqref{eq:sandwich_residual}. 
 While we do not claim that the 20 reported Ritz values form the complete spectrum in $\Omega_{\rm car}$, the experiment demonstrates that  \textsf{NEP\_MiniMax-CORK}   is    effective for this sparse engineering NEP.

    \begin{figure}[t]
        \centering
        \includegraphics[width=0.95\textwidth]{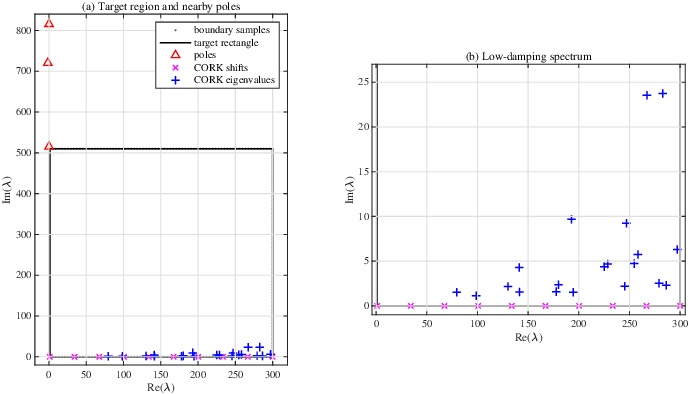}
        \caption{\footnotesize Numerical results for Example \ref{ex:car_cavity}  with the type $(9,9)$ approximation in 
        \textsf{NEP\_MiniMax-CORK}. Left: the samples, target rectangle, nearby finite
          poles, ten real CORK shifts, and 20 computed Ritz values.
        Right: an enlargement of the low-damping part of the target rectangle.}
        \label{fig:car_cavity_spectrum}
    \end{figure}

    \begin{table}[htbp]
    \caption{$\varrho_{\rm rel}(\lambda,\pmb{u})$
    of {\sf NEP\_MiniMax-CORK} and SV-AAA for Example \ref{ex:car_cavity}}
    \centering
    \begingroup
    \small
    \setlength{\tabcolsep}{6pt}
    \renewcommand{\arraystretch}{1.3}
    \begin{tabular}{|c|l|c|c|c|}
    \hline
      & Method & type $(8,8)$ & type $(9,9)$ & type $(10,10)$ \\
    \hline
    \multirow{2}{*}{minimum of $\varrho_{\rm rel}(\lambda,\pmb{u})$}
    & {\sf NEP\_MiniMax-CORK} & 7.9915e-11 & 1.7624e-13 & 1.2358e-13 \\
    \cline{2-5}
    & SV-AAA & 4.1732e-13 & 2.7413e-14 & 4.2523e-14 \\
    \hline
    \multirow{2}{*}{median of $\varrho_{\rm rel}(\lambda,\pmb{u})$}
    & {\sf NEP\_MiniMax-CORK} & 1.1660e-10 & 1.5805e-12 & 3.7550e-12 \\
    \cline{2-5}
    & SV-AAA & 8.3111e-10 & 1.9764e-10 & 2.4281e-10 \\
    \hline
    \multirow{2}{*}{maximum of $\varrho_{\rm rel}(\lambda,\pmb{u})$}
    & {\sf NEP\_MiniMax-CORK} & 2.1131e-10 & 5.3299e-11 & 3.2096e-11 \\
    \cline{2-5}
    & SV-AAA & 7.4388e-09 & 2.4722e-09 & 1.4986e-09 \\
    \hline
    \end{tabular}
    \endgroup
    \label{table:car_residual_comparison}
\end{table}


\section{Conclusion}\label{sec:conclusion}
{In this paper, we have presented {\sf NEP\_MiniMax}, an approach for nonlinear eigenvalue problems whose defining approximation step is rational minimax approximation with a shared denominator of the split coefficient vector via the {\tt m-d-Lawson} algorithm. This coefficient-vector approximant induces a rational matrix surrogate, which is solved through a strong linearization framework adaptable to V+A polynomial bases and integrated with compact rational Krylov (CORK) methods or rational-filter subspace iteration. The error analysis transfers the coefficient-vector minimax accuracy to a matrix approximation and eigenpair residual bound. Numerical experiments demonstrate competitive residual norms compared to Beyn's method, NLEIGS and SV-AAA at comparable approximation degrees. The main feature of the method is therefore the inexpensive global approximation of the split coefficient vector, together with structured eigensolution of the induced matrix surrogate.}

\section*{Appendix}\label{sec:appendix}
In this appendix, based   on the following theorem, we   give the proof of Proposition \ref{prop:stronglinear}. 
\begin{theorem}{\rm (\cite[Theorem 2.2]{amcl:2009})}
Let \(P(\lambda)\)\ be an \(n \times n\)\ regular matrix polynomial  {of grade $\gamma$, with $P_{\gamma}$ denoting the coefficient of $\lambda^\gamma$ in its monomial representation} (possibly zero) and let \(\lambda A-B\)\  be a \(\gamma n \times \gamma n\)\ linear matrix function. Assume that, for each distinct finite eigenvalue \(\lambda_{j}\)\ there exist functions \(D_{j}(\lambda)\)\ and \(G_{j}(\lambda)\) which are unimodular and analytic on a neighbourhood of \(\lambda_{j}\)\ and for which 
\begin{equation}
    \begin{bmatrix}P(\lambda)&\\ &I_{(\gamma-1)n}\end{bmatrix}=D_{j}(\lambda)(\lambda A-B)G_{j}(\lambda).
    \label{eq:local analytic of p}
\end{equation}
If \({P_{\gamma}}\) is singular (or zero), assume also that there are functions \(D_{0}(\lambda)\)\ and \(G_{0}(\lambda)\) which are unimodular and analytic on a neighbourhood of \(\lambda=0\)\ and for which 
\begin{equation}
    \begin{bmatrix}P^{\#}(\lambda)&\\ &I_{(\gamma-1)n}\end{bmatrix}=D_{0}(\lambda)(A-\lambda B)G_{0}(\lambda).
\end{equation}
Then \(\lambda A-B\)\ is a strong linearization of \(P(\lambda)\).
\label{thm:strong_reference}
\end{theorem}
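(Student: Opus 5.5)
The plan is to reduce everything to the Smith form. Two regular $\gamma n\times\gamma n$ matrix polynomials are equivalent through unimodular polynomial matrices if and only if they have the same invariant polynomials. Each invariant polynomial factors as $\prod_j(\lambda-\lambda_j)^{\kappa_{j,i}}$, where the exponents $\kappa_{j,i}$ are the partial multiplicities at $\lambda_j$, and these are determined by the \emph{local} Smith form at $\lambda_j$. A local Smith form is invariant under multiplication by matrix functions that are analytic and invertible near $\lambda_j$. So the hypotheses \eqref{eq:local analytic of p} say that $\operatorname{diag}(P(\lambda),I_{(\gamma-1)n})$ and $\lambda A-B$ have the same partial multiplicities at every finite eigenvalue $\lambda_j$ of $P$.

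\emph{Step 1 (regularity and absence of spurious eigenvalues).} First, $\lambda A-B$ is regular. Near any $\lambda_j$, the matrix $\operatorname{diag}(P(\lambda),I)$ is invertible at all points $\lambda\neq\lambda_j$, so $\lambda A-B$ is invertible there. If $P$ has no finite eigenvalues, I would use the reversal hypothesis at $0$ in the same way.

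Next, I must show that $\lambda A-B$ has no finite eigenvalues other than the $\lambda_j$, and no extra multiplicity at infinity. This is a counting argument:
\begin{itemize}
\item $\deg\det(\lambda A-B)\le\gamma n$, so the finite and infinite eigenvalues of the pencil have total algebraic multiplicity exactly $\gamma n$.
\item Regarded with grade $\gamma$, $P$ likewise has total multiplicity $\gamma n$, with its multiplicity at infinity equal to that of $P^{\#}$ at $0$.
\item By local equivalence, the multiplicities at each $\lambda_j$ agree.
\item If $P_\gamma$ is nonsingular, all $\gamma n$ multiplicities of $P$ are finite. The pencil therefore exhausts its budget at the $\lambda_j$, so $\det(\lambda A-B)$ has degree $\gamma n$, $A$ is nonsingular, and there are no other eigenvalues.
\item If $P_\gamma$ is singular, the hypothesis at $\lambda=0$ for $A-\lambda B$ matches the infinite multiplicities as well. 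The count again leaves no room for extra finite eigenvalues.
\end{itemize}

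\emph{Step 2 (global equivalence).} By Step 1, the determinants of $\operatorname{diag}(P,I)$ and $\lambda A-B$ have the same zeros, and the local Smith forms coincide at every zero. Hence all invariant polynomials agree, up to normalizing leading coefficients, since $\det(\lambda A-B)$ and $\det P$ differ by a nonzero constant. The two global Smith forms are therefore equal, which yields unimodular polynomial $D,G$ with \eqref{eq:global analytic of p}.

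\emph{Step 3 (reversal).} I would apply the same argument to $P^{\#}(\lambda)$ and the pencil $A-\lambda B$. At $\mu=0$ the local equivalence is assumed when $P_\gamma$ is singular; when $P_\gamma$ is nonsingular, both are invertible at $0$ and there is nothing to check. For $\mu\neq0$, I would use
\[
A-\mu B=\mu\bigl(\mu^{-1}A-B\bigr),\qquad P^{\#}(\mu)=\mu^{\gamma}P(\mu^{-1}).
\]
Substituting $\lambda=\mu^{-1}$ into the local factorization at $\lambda_j=\mu^{-1}$ gives a local factorization near $\mu$. The diagonal scalings $\operatorname{diag}(\mu^{\gamma}I_n,\mu^{-1}I)$ and $\mu^{-1}$ are analytic and invertible near $\mu\neq0$, so they can be absorbed into $D_j$ and $G_j$. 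Steps 1 and 2 then give the unimodular $H,K$ for the reversal.

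I expect the main obstacle to be the counting in Step 1, namely excluding eigenvalues of the pencil that $P$ does not have, especially when $P_\gamma$ is singular and infinite eigenvalues are present. I would handle it by careful degree bookkeeping of $\det(\lambda A-B)$ against $\det P$ and $\det P^{\#}$, together with the relation $\det(A-\lambda B)=\lambda^{\gamma n}\det(\lambda^{-1}A-B)$.
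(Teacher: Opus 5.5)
The paper does not prove this statement. It is quoted from \cite{amcl:2009} and only applied in the Appendix, together with the factorizations of Lemma~\ref{lem:lu}, so there is no in-paper argument to compare against. Your proof is correct, and it follows the standard local-to-global Smith form route.

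\begin{itemize}
\item The local hypotheses give equal partial multiplicities for $\operatorname{diag}(P,I_{(\gamma-1)n})$ and $\lambda A-B$ at each $\lambda_j$.
\item Equal partial multiplicities at every point of $\mathbb{C}$ force equal invariant polynomials for regular matrix polynomials. This gives unimodular polynomial equivalence.
\item The hypotheses only constrain the pencil near eigenvalues of $P$ (and near $\infty$), and the bare statement leaves this implicit. Your degree count correctly closes that gap: it forces $\det(\lambda A-B)=c\det P(\lambda)$ with $c\neq 0$. When $P_\gamma$ is singular this uses $\operatorname{ord}_0\det(A-\lambda B)=\gamma n-\deg\det(\lambda A-B)$.
\item That identity rules out spurious eigenvalues, and it gives $\det A\neq 0$ when $P_\gamma$ is nonsingular.
\end{itemize}

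Three small points, none of which affects validity:
\begin{itemize}
\item If $P$ has no finite eigenvalues, then $\det P$ is a nonzero constant, so $\deg\det P<\gamma n$. Hence $P_\gamma$ is necessarily singular, and the reversal hypothesis you invoke in Step~1 is indeed available.
\item In Step~3 you can skip re-running Step~1. Note directly that $\det(A-\mu B)=\mu^{\gamma n}\det(\mu^{-1}A-B)=c\det P^{\#}(\mu)$.
\item The factors to absorb near $\mu\neq 0$ are $\operatorname{diag}(\mu^{\gamma}I_n,I_{(\gamma-1)n})$ and $\mu^{-1}$, not $\operatorname{diag}(\mu^{\gamma}I_n,\mu^{-1}I)$. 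This is because $\operatorname{diag}(P^{\#}(\mu),I)=\operatorname{diag}(\mu^{\gamma}I_n,I)\operatorname{diag}(P(\mu^{-1}),I)$ and $\mu^{-1}A-B=\mu^{-1}(A-\mu B)$.
\end{itemize}
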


{\it Proof of Proposition \ref{prop:stronglinear}.} The proof follows by analogous arguments to those in \cite[Theorem 3.1]{amcl:2009}. 
   First, we pre-multiply the pencil $\lambda C_{1}-C_{0}$ by the $n\gamma \times n\gamma$ block permutation matrix $S$ defined in \eqref{eq:S_permutation}. Since strong linearization is preserved under such equivalence transformations, $(\lambda C_{1}-C_{0})S$ remains a strong linearization if and only if $\lambda C_{1}-C_{0}$ is.
By Lemma \ref{lem:lu}(i), we obtain  
$(\lambda C_1-C_0)S = L(\lambda)U(\lambda)$
with $L(\lambda)$ and $U(\lambda)$ given in \eqref{eq:lu1_L_def} and \eqref{eq:lu1_U_def} respectively. Here, $U(\lambda)$ is   unimodular ($\det U(\lambda) \equiv 1$) and thus invertible.
Defining $\widetilde{L}(\lambda)$ as a modified version of $L(\lambda)$ where the final block entry is replaced by
$$\widetilde{L}_{\gamma,\gamma}(\lambda)=\frac{1}{\prod_{i=1}^{\gamma -1} h_{i+1,i}}I_{n},$$
we observe that $\det(\widetilde{L}(\lambda)) \equiv \pm 1$. This yields the refined decomposition
$$(\lambda C_{1}-C_{0})S = \widetilde{L}(\lambda) \begin{bmatrix}
I_{(\gamma-1)n}&\\
&P^*(\lambda)
\end{bmatrix} U(\lambda).$$
    Consequently, we obtain 
$\begin{bmatrix}
    I_{(\gamma-1)n} & \\
    & P^*(\lambda)
\end{bmatrix}
= D(\lambda)(\lambda C_1 - C_0) G(\lambda),$
where
$D(\lambda) = \widetilde{L}^{-1}(\lambda)$ and $G(\lambda) = S U^{-1}(\lambda)$ 
are both analytic and invertible at all finite eigenvalues of $P^*(\lambda)$. By Theorem \ref{thm:strong_reference}, $\lambda C_1 - C_0$ is thus a weak linearization of $P^*(\lambda)$.

For the strong linearization, we next examine its behavior under reversal by considering the reverse polynomial:
$$P^\#(\lambda) := (P^*(\lambda))^\# = \lambda^{\gamma} P^*(\lambda^{-1}).  $$
Following Theorem \ref{thm:strong_reference}, we must construct matrix functions $H(\lambda)$ and $K(\lambda)$ that are analytic and nonsingular near $\lambda = 0$ and satisfy the decomposition
\begin{equation}
\begin{bmatrix}
I_{(\gamma-1)n} & \\
& P^\#(\lambda)
\end{bmatrix}
= H(\lambda)(C_1 - \lambda C_0) K(\lambda).
\label{eq:rr3} 
\end{equation}

Step 1 (Reverse Factorization):
By  {Lemma \ref{lem:lu}(ii)}, the pencil admits a factorization
$\lambda C_1 - C_0 = L^\#(\lambda) U^\#(\lambda),$
where $L^\#(\lambda)$ and $U^\#(\lambda)$ are given in \eqref{eq:lu2_L_def} and \eqref{eq:lu2_U_def}. Applying the substitution $\lambda \mapsto \lambda^{-1}$, we derive
$C_1 - \lambda C_0 = \lambda L^\#(\lambda^{-1}) U^\#(\lambda^{-1}),$
which yields the reverse LU-factors:
$$L_1(\lambda) = \lambda L^\#(\lambda^{-1}), \quad U_1(\lambda) = U^\#(\lambda^{-1}).  $$

{
Step 2 (Local Analysis at $\lambda = 0$):
We next verify that the factors needed in \eqref{eq:rr3} are analytic and nonsingular at the origin. Since $\vartheta_j$ has exact degree $j$ and leading coefficient $k_j\ne0$, we have, as $\lambda\to0$,
\begin{equation}\label{eq:vartheta_ratio_origin}
\frac{\vartheta_{j-1}(\lambda^{-1})}{\vartheta_j(\lambda^{-1})}
=\frac{k_{j-1}}{k_j}\lambda+O(\lambda^2)
=h_{j+1,j}\lambda+O(\lambda^2),
\qquad j=1,\ldots,\gamma-1.
\end{equation}
Thus the entries of $U_1(\lambda)=U^\#(\lambda^{-1})$ have removable singularities at $\lambda=0$, and $U_1(0)=I_{\gamma n}$. Hence
$K(\lambda):=U_1^{-1}(\lambda)$
is analytic and nonsingular in a neighbourhood of $\lambda=0$.

Let
\begin{equation}\label{eq:alpha_origin}
\alpha(\lambda):=
\frac{\vartheta_0(\lambda^{-1})}
{\lambda^{\gamma-1}\vartheta_{\gamma-1}(\lambda^{-1})
\prod_{k=1}^{\gamma-1}h_{k+1,k}} .
\end{equation}
By the same leading-term argument, $\alpha(\lambda)$ also has a removable singularity at $\lambda=0$ and $\alpha(0)=1$. Define $\widetilde L_1(\lambda)$ by replacing the $(\gamma,\gamma)$ block of $L_1(\lambda)$ with
$\alpha(\lambda) I_n.$
Using \eqref{eq:uuu_def}, the original $(\gamma,\gamma)$ block of $L_1(\lambda)$ is
\begin{equation}\nonumber
\lambda
\frac{\vartheta_0(\lambda^{-1})}
{\vartheta_{\gamma-1}(\lambda^{-1})
\prod_{k=1}^{\gamma-1}h_{k+1,k}}
P^*(\lambda^{-1})
=\alpha(\lambda)P^\#(\lambda).
\end{equation}
Consequently,
\begin{equation}\label{eq:L1_reversal_split}
L_1(\lambda)=\widetilde L_1(\lambda)
\begin{bmatrix}
I_{(\gamma-1)n}&\\
&P^\#(\lambda)
\end{bmatrix}.
\end{equation}
Moreover, $\widetilde L_1(\lambda)$ is block lower triangular. Its first $\gamma-1$ diagonal blocks are
\[
\lambda h_{j+1,j}
\frac{\vartheta_j(\lambda^{-1})}{\vartheta_{j-1}(\lambda^{-1})}I_n,
\qquad j=1,\ldots,\gamma-1,
\]
and its last diagonal block is $\alpha(\lambda)I_n$. Therefore,
\[
\prod_{j=1}^{\gamma-1}
\lambda h_{j+1,j}
\frac{\vartheta_j(\lambda^{-1})}{\vartheta_{j-1}(\lambda^{-1})}
\cdot \alpha(\lambda)=1,
\]
which implies $\det\widetilde L_1(\lambda)\equiv1$. Hence
$H(\lambda):=\widetilde L_1^{-1}(\lambda)$
is analytic and nonsingular near $\lambda=0$. Combining
$C_1-\lambda C_0=L_1(\lambda)U_1(\lambda)$ with
\eqref{eq:L1_reversal_split} gives
$
\begin{bmatrix}
I_{(\gamma-1)n}&\\
&P^\#(\lambda)
\end{bmatrix}
=H(\lambda)(C_1-\lambda C_0)K(\lambda),
$
which is precisely \eqref{eq:rr3}. This completes the proof. \hfill $\square$
}

{\small 
\def\noopsort#1{}\def\l{\char32l}\def\v#1{{\accent20 #1}}
  \let\^^_=\v\def\hbk{hardback}\def\pbk{paperback}
\providecommand{\href}[2]{#2}
\providecommand{\arxiv}[1]{\href{http://arxiv.org/abs/#1}{arXiv:#1}}
\providecommand{\url}[1]{\texttt{#1}}
\providecommand{\urlprefix}{URL }

}


\end{document}